\newtheorem{teor}{Theorem}[section]
\newtheorem{defi}{Definition}
\newtheorem{lemma}[teor]{Lemma}
\newtheorem{prop}[teor]{Proposition}
\newtheorem{cor}[teor]{Corollary}
\newtheorem{rem}[teor]{Remark}
\newtheorem{exem}[teor]{Example}
\newtheorem{ques}[teor]{Question}
\title{On hearts which are module categories}
\author{Carlos E. Parra \thanks{The authors thank Pere Ara for his comments
on the surjectivity of the monoid map $V(R)\longrightarrow
V(R/\mathbf{a})$ and for telling us about the reference \cite{A}. Parra is supported by a grant from the Universidad de los Andes (Venezuela) and Saor\'in is
supported by research projects from the Spanish Ministry of
Education  and from the Fundaci\'on 'S\'eneca'
of Murcia, with a part of FEDER funds. The authors
thank these institutions for their help.} \\
Departamento de Matem\'aticas\\ Universidad de los Andes \\ ({\bf 5101}) M\'erida\\ VENEZUELA\\
{\it carlosparra@ula.ve} \\  \\ Manuel Saor\'in \footnotemark[1]  \\ Departamento de Matem\'aticas\\
Universidad de Murcia, Aptdo. 4021\\
30100 Espinardo, Murcia\\
SPAIN\\ {\it msaorinc@um.es}  }
\begin{document}
\date{}
%\thanks{}
\maketitle
%\footnote{Parra is supported by a grant from the Universidad de los Andes (Venezuela) and Saor\'in is
%supported by research projects from the Spanish Ministry of
%Education  and from the Fundaci\'on 'S\'eneca'
%of Murcia, with a part of FEDER funds. The authors
%thank these institutions for their help.}

\begin{abstract}
{\bf Given a torsion pair $\mathbf{t}=(\mathcal{T},\mathcal{F})$ in
a module category $R-\text{Mod}$ we give necessary and sufficient
conditions for the associated Happel-Reiten-Smal\o \  t-structure in
$\mathcal{D}(R)$ to have a heart $\mathcal{H}_\mathbf{t}$ which is a
module category. We also study when such a pair is given by a 2-term
complex of projective modules in the way described by
Hoshino-Kato-Miyachi (\cite{HKM}). Among other consequences, we
completely identify  the hereditary torsion pairs $\mathbf{t}$ for
which $\mathcal{H}_\mathbf{t}$ is a module category in the following
cases: i) when $\mathbf{t}$ is the left constituent of a TTF triple,
showing that $\mathbf{t}$ need not be HKM; ii) when $\mathbf{t}$ is
faithful; iii) when $\mathbf{t}$ is arbitrary and the ring $R$ is
either commutative, semi-hereditary, local, perfect or Artinian. We
also give a systematic way of constructing non-tilting torsion pairs
 for which the heart is a module category
generated by  a stalk complex at zero}

\end{abstract}

\hspace*{0.5cm}

{\bf Mathematics Subjects Classification: 16Exx, 18Gxx, 16B50}

\section{Introduction}
Beilinson, Bernstein and Deligne \cite{BBD} introduced the notion of
 t-structure in a triangulated category in their study of perverse
sheaves on an algebraic or analytic variety. If $\mathcal{D}$ is
such a triangulated category, a t-structure in $\mathcal{D}$ is a
pair of full subcategories satisfying suitable axioms (see the
precise definition in next section) which guarantee that their
intersection is an abelian category $\mathcal{H}$, called the heart
of the t-structure. This category  comes with a cohomological
functor $\mathcal{D}\longrightarrow\mathcal{H}$. Roughly speaking, a
t-structure allows to develop an intrinsic (co)homology theory,
where the homology 'spaces' are again objects of $\mathcal{D}$
itself.

In the context of bounded derived categories, Happel, Reiten and
Smal\o \ \cite{HRS} associated to each torsion pair
$\mathbf{t}$ in an abelian category $\mathcal{A}$, a t-structure in
the bounded derived category $\mathcal{D}^b(\mathcal{A})$. This
t-structure is actually the restriction of a t-structure
 in the unbounded derived category $\mathcal{D}(\mathcal{A})$, when this later category is defined.
 Several authors (see \cite{CGM}, \cite{CMT}, \cite{MT}, \cite{CG})
have dealt with the problem of deciding when its heart
$\mathcal{H}_\mathbf{t}$  is a Grothendieck or module category. When
$\mathcal{A}=\mathcal{G}$ is a Grothendieck category, after the
recent work by the authors (see \cite{PS}), it seems that the
condition that $\mathcal{H}_\mathbf{t}$ be a Grothendieck category
is well understood. Indeed, under fairly general hypotheses,
$\mathcal{H}_\mathbf{t}$ is a Grothendieck category if, and only if,
the torsionfree class of the pair is closed under taking direct
limits in $\mathcal{G}$ (see \cite[Theorem 4.9]{PS}).

The situation when $\mathcal{H}_\mathbf{t}$ is a module category is
far less understood, even in the case when
$\mathcal{A}=R-\text{Mod}$ is a module category. The problem has
been tackled, from different perspectives, in \cite{HKM},
\cite{CGM}, \cite{CMT} and \cite{MT}. In the second of these
references, the authors show that an abelian category with a
classical 1-tilting object is equivalent to
$\mathcal{H}_\mathbf{t}$, for some faithful torsion pair
$\mathbf{t}$ in a module category. Since a classical tilting object
defines an equivalence between the derived categories of the ambient
abelian category and  of the endomorphism ring of the object,
faithful torsion pairs in module categories became natural
candidates to study when the heart is a module category. In
\cite{CMT} the authors pursued this line and gave necessary and
sufficient conditions for a faithful torsion pair in a module
category to have a modular heart. In the earlier paper \cite{HKM},
the authors had associated a pair of subcategories
$(\mathcal{X}(P^\bullet ),\mathcal{Y}(P^\bullet ))$ of
$R-\text{Mod}$ to a $2$-term  complex $P^\bullet$ of finitely
generated projective modules. Then they gave necessary and
sufficient conditions for the pair to be a torsion pair, in which
case the corresponding heart was a module category. In \cite{MT},
for a given torsion pair $\mathbf{t}$ in $R-\text{Mod}$, the authors
compared the conditions that the heart be a module category with the
condition that $\mathbf{t}$ be a torsion pair as in \cite{HKM}. In
particular, they proved that if $\mathbf{t}$ is faithful then both
conditions were equivalent.

In the present paper, given any torsion pair $\mathbf{t}$ in a
module category $R-\text{Mod}$, we  give necessary and sufficient
conditions for the heart $\mathcal{H}_\mathbf{t}$ to be a module
category and, simultaneously,  compare this property with that of
$\mathbf{t}$ being an HKM torsion pair (see next section for all the
pertinent definitions of the terms that we use in this
introduction). When tackled in full generality, the conditions that
appear tend to be rather technical, but a deeper look in particular
cases gives more precise information on the torsion pair. Roughly
speaking, when one assumes that $\mathbf{t}$ is hereditary one falls
into the world of TTF triples, while if one assumes that the torsion
class is closed under taking products in $R-\text{Mod}$, then one
enters the world of classical tilting torsion pairs.

The following is a list of the main results, all of them given for a
torsion pair $\mathbf{t}=(\mathcal{T},\mathcal{F})$ in
$R-\text{Mod}$:

\begin{enumerate}
\item (Part of theorem \ref{teor.hereditary case}) If $\mathbf{t}$
is hereditary and $\mathcal{H}_\mathbf{t}$ is a module category,
then
$\mathbf{t}'=(\mathcal{T}\cap\frac{R}{t(R)}-\text{Mod},\mathcal{F})$
is the right constituent torsion pair of a TTF triple in
$\frac{R}{t(R)}-\text{Mod}$. When $\mathbf{t}$ is bounded, it is
itself the right constituent pair of a TTF triple in $R-\text{Mod}$.

\item (Corollary \ref{cor.progenerator stalk in 0}) $\mathcal{H}_\mathbf{t}$  has a progenerator which is a stalk
complex $V[0]$ if, and only if, $\mathbf{t}$ is the torsion pair
associated to a finitely presented quasi-tilted $R$-module $V$ such
that $\text{Ext}_R^2(V,?)_{| \mathcal{F}}=0$ and $\mathcal{T}$
cogenerates $\mathcal{F}$. There is a systematic way (see theorem
\ref{teor.nontilting pair with stalk progenerator}) of constructing
non-tilting modules $V$ satisfying this property.

\item (Part of proposition \ref{prop.non-HKM modular heart}) If
$\mathbf{t}$ is hereditary and the left constituent pair of a TTF
triple, then $\mathcal{H}_\mathbf{t}$ is a module category if, and
only if, there is a finitely generated projective module $P$ such
that $\mathcal{T}=\text{Gen}(P)$. In general, $\mathbf{t}$ need not
be HKM.

\item (Part of theorem \ref{teor.closed under products case}) If
$\mathcal{T}$ is closed under taking products in $R-\text{Mod}$ and
$\mathcal{H}_\mathbf{t}$ is a module category, then there is a
finitely presented module $V$ such that $\mathcal{T}=\text{Gen}(V)$
and $V$ is classical 1-tilting over $R/\mathbf{a}$, where
$\mathbf{a}=\text{ann}_R(V)$. Moreover, the torsion pair
$\mathbf{t}'=(\text{Gen}(V),\mathcal{F}\cap\frac{R}{\mathbf{a}}-\text{Mod})$
 in $R/\mathbf{a}-\text{Mod}$ has a  heart which is a module category and embeds faithfully in
$\mathcal{H}_\mathbf{t}$.

\item (Theorem \ref{teor.modular H with easy conditions}) Suppose
that $\mathbf{t}$ is the right constituent pair of the TTF triple
$(\mathcal{C},\mathcal{T},\mathcal{F})$ in $R-\text{Mod}$ defined
by the idempotent ideal $\mathbf{a}$. Under fairly general
hypotheses, the heart $\mathcal{H}_\mathbf{t}$ is a module category
if, and only if, $\mathbf{a}$ is finitely generated on the left and
there is a finitely generated projective $R$-module $P$ such that:

\begin{enumerate}
\item $P/\mathbf{a}P$ is a progenerator of
$R/\mathbf{a}-\text{Mod}$;
\item There is an exact sequence $0\rightarrow F\longrightarrow C\longrightarrow\mathbf{a}P\rightarrow
0$, with $C$ finitely generated module in $\mathcal{C}$,  such that
$\text{Ext}_R^1(C,?)_{| \mathcal{F}}=0$ and $C$ generates
$\mathcal{C}\cap\mathcal{F}$.
\end{enumerate}
\item If $\mathbf{t}$ is the right constituent of the TTF triple
defined by a finitely generated projective module whose trace in $R$
is finitely generated, then $\mathcal{H}_\mathbf{t}$ is a module
category (corollary \ref{cor.torsion theory given by projective}).
Under fairly general hypotheses,    the converse is also true for
arbitrary faithful hereditary torsion pairs (corollary
\ref{cor.faithful hereditary torsion pair}).
\item For the following classes of rings, all hereditary torsion
pairs whose heart is a module category are identified: commutative
(corollary \ref{cor.hereditary pairs in commutative}),
semihereditary (proposition \ref{prop.semihereditary}), local,
perfect and artinian (corollary \ref{cor.local and artinian}).
\end{enumerate}

The organization of the paper goes as follows. Section 2 gives the
preliminaries that are needed and the terminology which is used in
the paper. Section 3 is devoted to giving necessary and sufficient
conditions on an arbitrary torsion pair $\mathbf{t}$ in
$R-\text{Mod}$ for its heart to be a module category and also for it
to be an HKM pair. In section 4 we assume that $\mathbf{t}$ is
hereditary and show how TTF triples appear naturally. In section 5,
we give necessary and sufficient conditions for
$\mathcal{H}_\mathbf{t}$ to have a progenerator which is a sum of
stalk complexes. In section 6 we assume that the torsion class is
closed under taking products and show that the modular condition on
$\mathcal{H}_\mathbf{t}$ naturally leads to classical tilting
torsion pairs. In  section 7, we assume that $\mathbf{t}$ is the
right constituent torsion pair of a TTF triple, and give necessary
and sufficient conditions for $\mathcal{H}_\mathbf{t}$ to be a
module category and for  $\mathbf{t}$ to be an HKM pair. We end the
paper with a final section of illustrative examples.

\section{Terminology and preliminaries} \label{sec.Terminology}
In this paper all rings are supposed to be associative with unit and
their modules will be always unital modules. Unless otherwise
stated, 'module' will mean 'left module' and if $R$ is a ring, we
shall denote by $R-\text{Mod}$ and $\text{Mod}-R$
(=$R^{op}-\text{Mod}$) its categories of left and right modules,
respectively. A \emph{module category} is any one which is
equivalent to $R-\text{Mod}$, for some ring $R$.

 The concepts that we shall
introduce in this section are mainly applied to the case of  module
categories, but sometimes we will use them in the most general
context of Grothendieck categories and is in this context that we
introduce them. Let then $\mathcal{G}$ be a Grothendieck category
all throughout this section.

A \emph{torsion pair} in  $\mathcal{G}$ is a pair
$\mathbf{t}=(\mathcal{T},\mathcal{F})$ of full subcategories
satisfying the following two conditions:

\begin{enumerate}
\item[] - $\text{Hom}_\mathcal{G}(T,F)=0$, for all $T\in\mathcal{T}$
and $F\in\mathcal{F}$;
\item[] - For each object $X$ of $\mathcal{G}$ there is an exact
sequence $0\rightarrow T_X\longrightarrow X\longrightarrow
F_X\rightarrow 0$, where $T_X\in\mathcal{T}$ and
$F_X\in\mathcal{F}$.
\end{enumerate}
In such case the objects $T_X$ and $F_X$ are uniquely determined, up
to isomorphism, and the assignment $X\rightsquigarrow T_X$ (resp.
$X\rightsquigarrow F_X$) underlies a functor
$t:\mathcal{G}\longrightarrow\mathcal{T}$ (resp.
$(1:t):\mathcal{G}\longrightarrow\mathcal{F}$) which is right (resp.
left) adjoint to the inclusion functor
$\mathcal{T}\hookrightarrow\mathcal{G}$ (resp.
$\mathcal{F}\hookrightarrow\mathcal{G}$). We will frequently write
$X/t(X)$ to denote $(1:t)(X)$. The composition
$\mathcal{G}\stackrel{t}{\longrightarrow}\mathcal{T}\hookrightarrow\mathcal{G}$,
which we will still denote by $t$,  is  called the \emph{torsion
radical} associated to $\mathbf{t}$.
 We call $\mathcal{T}$ and
$\mathcal{F}$ the \emph{torsion class} and \emph{torsionfree class}
of the pair, respectively. For each class $\mathcal{X}$ of objects,
we will put $\mathcal{X}^\perp =\{M\in\mathcal{G}:$
$\text{Hom}_\mathcal{G}(X,M)=0\text{, for all }X\in\mathcal{X}\}$
and  ${}^\perp\mathcal{X} =\{M\in\mathcal{G}:$
$\text{Hom}_\mathcal{G}(M,X)=0\text{, for all }X\in\mathcal{X}\}$.
If $\mathbf{t}$ is a torsion pair as above, then
$\mathcal{T}={}^\perp\mathcal{F}$ and
$\mathcal{F}=\mathcal{T}^\perp$. The torsion pair  is called
\emph{hereditary} when $\mathcal{T}$ is closed under taking
subobjects in $\mathcal{G}$. It is called \emph{split} when $t(X)$
is a direct summand of $X$, for each object $X$ of $\mathcal{G}$. If
$R$ is a ring and $\mathcal{G}=R-\text{Mod}$,   we will say that
$\mathbf{t}$ is \emph{faithful} when $R\in\mathcal{F}$.

A class $\mathcal{T}\subseteq\mathcal{G}$ is a \emph{TTF
(=torsion-torsionfree) class} when it is both a torsion and a
torsionfree class in $\mathcal{G}$. Each triple of the form
$(\mathcal{C},\mathcal{T},\mathcal{F})=({}^\perp\mathcal{T},\mathcal{T},\mathcal{T}^\perp)$,
for some TTF class $\mathcal{T}$, will be called a \emph{TTF triple}
and the two torsion pairs $(\mathcal{C},\mathcal{T})$ and
$(\mathcal{T},\mathcal{F})$ will be called the \emph{left
constituent pair} and \emph{right constituent pair} of the TTF
triple. The TTF triple is called \emph{left (resp. right) split}
when its left (resp. right) constituent torsion pair is split. It is
called \emph{centrally split} when both constituent torsion pairs
are split. When $\mathcal{G}=R-\text{Mod}$, it is well-known (see
\cite[Chapter VI]{S}) that  $\mathcal{T}$ is a TTF class if, and
only if,  there is a (unique) idempotent two-sided ideal $\mathbf{a}$ of $R$
such that $\mathcal{T}$ consists of the $R$-modules $T$ such that
$\mathbf{a}T=0$. Moreover, the torsion radical $c$ with respect to
$(\mathcal{C},\mathcal{T})$  assigns to each module $M$ the
submodule $c(M)=\mathbf{a}M$. In particular, we have
$\mathcal{C}=\text{Gen}(\mathbf{a})=\{C\in R-\text{Mod}:$
$\mathbf{a}C=C\}$. When $P$ is projective $R$-module,
$\mathcal{T}=\text{Ker}(\text{Hom}_R(P,?))$ is a TTF class and
$\text{Gen}(P)={}^\perp\mathcal{T}$. The corresponding idempotent
ideal is the trace of $P$ in $R$.

Given any additive category $\mathcal{A}$ with coproducts, an object
$X$ of $\mathcal{A}$ is called \emph{compact} when the functor
$\text{Hom}_\mathcal{A}(X,?):\mathcal{A}\longrightarrow\text{Ab}$
preserves coproducts. Recall that if $R$ is a ring, then the compact
objects of its derived category $\mathcal{D}(R)$ are the complexes
which are quasi-isomorphic to bounded complexes of finitely
generated projective modules (see \cite{R}).

Let $X$ and $V$ be objects of $\mathcal{G}$. We say that $X$ is
\emph{$V$-generated (resp. $V$-presented)} when there is an
epimorphism $V^{(I)}\twoheadrightarrow X$ (resp. an exact sequence
$V^{(J)}\longrightarrow V^{(I)}\longrightarrow X\rightarrow 0$), for
some sets $I$ and $J$.  We will denote by $\text{Gen}(V)$ and
$\text{Pres}(V)$  the classes of $V$-generated and $V$-presented
objects, respectively. The object $X$ always contains a largest
$V$-generated subobject, namely,
$tr_V(X)=\sum_{f\in\text{Hom}_\mathcal{G}(V,X)}\text{Im}(f)$. It is
called the \emph{trace of $V$ in $X$}. As a sort of dual concept,
given a class $\mathcal{S}$ of objects of $\mathcal{G}$, the
\emph{reject of $\mathcal{S}$ in $X$} is
$\text{Rej}_\mathcal{S}(X)=\bigcap_{f\in\text{Hom}_\mathcal{G}(X,S)}\text{Ker}(f)$.
We say that $X$ is \emph{$V$-subgenerated} when it is isomorphic to
a subobject of a $V$-generated object. The class of $V$-subgenerated
objects will be denoted by $\overline{\text{Gen}}(V)$. This
subcategory is itself a Grothendieck category and the inclusion
$\overline{\text{Gen}}(V)\hookrightarrow\mathcal{G}$ is an exact
functor. We will denote by $\text{Add}(V)$ (resp. $\text{add}(V)$)
the class of objects $X$ of $\mathcal{G}$ which are isomorphic to
direct summands of
 coproducts (resp. finite coproducts) of copies of $V$.

Given any category $\mathcal{C}$, an object $G$ is called a \emph{generator} of $\mathcal{C}$ when  the functor 
$\text{Hom}_\mathcal{C}(G,?):\mathcal{C}\longrightarrow\text{Sets}$ is faithful. When $\mathcal{C}=\mathcal{A}$ is cocomplete abelian, $G$ is a generator exactly when  $\text{Gen}(G)=\mathcal{A}$ (note that the definition of $\text{Gen}(V)$ is also valid in this context). 
 An object $G$ of $\mathcal{A}$ is called a
\emph{progenerator} when it is a compact projective generator. It is
a well-known result of Gabriel and Mitchell (see \cite[Corollary
3.6.4]{Po}) that $\mathcal{A}$ is  a module category if, and only
if, it has a progenerator. We will frequently use this
characterization of module categories in the paper.

 Slightly diverting from the terminology of \cite{CDT1} and
 \cite{CDT2}, an object $V$ of $\mathcal{G}$ will be called \emph{quasi-tilting} when
 $\text{Gen}(V)=\overline{\text{Gen}}(V)\cap\text{Ker}(\text{Ext}_\mathcal{G}^1(V,?))$.
 When, in addition, we have that
 $\overline{\text{Gen}}(V)=\mathcal{G}$, we will say that $V$ is
 a \emph{1-tilting object}. That is, $V$ is 1-tilting if, and only
 if, $\text{Gen}(V)=\text{Ker}(\text{Ext}_\mathcal{G}^1(V,?))$.
When $\mathcal{G}=R-\text{Mod}$, a module $V$ is 1-tilting if, and
only if, it satisfies the following three properties:
\hspace*{0.5cm} i) the projective dimension of $V$, denoted
$pd({}_RV)$, is $\leq 1$; \hspace*{0.5cm} ii)
$\text{Ext}_R^1(T,T^{(I)})=0$, for each set $I$; \hspace*{0.5cm}
iii) there exists and exact sequence $0\rightarrow R\longrightarrow
T^0\longrightarrow T^1\rightarrow 0$ in $R-\text{Mod}$, where
$T^i\in\text{Add}(T)$ for $i=0,1$ (see \cite[Proposition 1.3]{CT}).

 When
 $V$ is a quasi-tilting object of $\mathcal{G}$, we have that $\text{Gen}(V)=\text{Pres}(V)$
 and that $(\text{Gen}(V),\text{Ker}(Hom_\mathcal{A}(V,?)))$
  is a torsion pair in $\mathcal{G}$. In the particular case when $V$ is 1-tilting, this
  pair is called the  \emph{tilting torsion pair}
associated to $V$.  A \emph{classical quasi-tilting (resp. classical
1-tilting)} object is a quasi-tilting (resp. 1-tilting) object $V$
such that the canonical morphism
$\text{Hom}_\mathcal{G}(V,V)^{(I)}\longrightarrow\text{Hom}_\mathcal{G}(V,V^{(I)})$
is an isomorphism, for all sets $I$. By \cite[Proposition
2.1]{CDT1}, we know  that if $\mathcal{G}=R-\text{Mod}$, then a
classical quasi-tilting $R$-module is just a finitely generated
quasi-tilting module. Even more (see \cite[Proposition 1.3]{CT}), a
classical 1-tilting $R$-module is just a finitely presented
1-tilting $R$-module.

On what concerns triangulated categories, we will follow \cite{N}
and \cite{V} as basic texts, but if $\mathcal{D}$ is a triangulated
category, we will denote  by
$?[1]:\mathcal{D}\longrightarrow\mathcal{D}$ the suspension functor
and we will write triangles in the form $X\longrightarrow
Y\longrightarrow Z\stackrel{+}{\longrightarrow}$. A
\emph{triangulated functor} between triangulated categories is a
functor which preserves triangles. Given a triangulated category
$\mathcal{D}$, a \emph{t-structure} in $\mathcal{D}$ is a pair
$(\mathcal{U},\mathcal{W})$ of full subcategories, closed under
taking direct summands in $\mathcal{D}$, which satisfy the
 following  properties:

\begin{enumerate}
\item[i)] $\text{Hom}_\mathcal{D}(U,W[-1])=0$, for all
$U\in\mathcal{U}$ and $W\in\mathcal{W}$;
\item[ii)] $\mathcal{U}[1]\subseteq\mathcal{U}$;
\item[iii)] For each $X\in Ob(\mathcal{D})$, there is a triangle $U\longrightarrow X\longrightarrow
V\stackrel{+}{\longrightarrow}$ in $\mathcal{D}$, where
$U\in\mathcal{U}$ and $V\in\mathcal{W}[-1]$.
\end{enumerate}
It is easy to see that in such case $\mathcal{W}=\mathcal{U}^\perp
[1]$ and $\mathcal{U}={}^\perp (\mathcal{W}[-1])={}^\perp
(\mathcal{U}^\perp )$. For this reason, we will write a t-structure
as $(\mathcal{U},\mathcal{U}^\perp [1])$. The full subcategory
$\mathcal{H}=\mathcal{U}\cap\mathcal{W}=\mathcal{U}\cap\mathcal{U}^\perp
[1]$ is called the \emph{heart} of the t-structure and it is an
abelian category, where the short exact sequences 'are' the
triangles in $\mathcal{D}$ with their three terms in $\mathcal{H}$. In particular, one has $\text{Ext}^{1}_{\mathcal{H}}(M,N)=\text{Hom}_{\mathcal{D}}(M,N[1]),$ for all objects $M$ and $N$ in $\mathcal{H}$ (see \cite{BBD}).

 We will denote by $\mathcal{C}(\mathcal{G})$, $\mathcal{K}(\mathcal{G})$ and $\mathcal{D}(\mathcal{G)}$ the category of chain
 complexes of objects of $\mathcal{G}$, the homotopy category of $\mathcal{G}$ and the derived category of $\mathcal{G}$, respectively.
 In the particular case when $\mathcal{G}=R-\text{Mod}$, we will  write $\mathcal{C}(R):=\mathcal{C}(R-\text{Mod})$,
 $\mathcal{K}(R):=\mathcal{K}(R-\text{Mod})$ and
 $\mathcal{D}(R):=\mathcal{D}(R-\text{Mod})$.
Given a torsion pair $\mathbf{t}=(\mathcal{T},\mathcal{F})$ in
$\mathcal{G}$, extending to the unbounded context a construction due
to Happel-Reiten-Smal\o \ (see \cite{HRS}),  one gets a t-structure
$(\mathcal{U}_\mathbf{t},\mathcal{U}_\mathbf{t}^{\perp}[1])=(\mathcal{U}_\mathbf{t},\mathcal{W}_\mathbf{t})$
in $\mathcal{D}(\mathcal{G})$ by defining:

\begin{center}
$\mathcal{U}_\mathbf{t}=\{X\in\mathcal{D}^{\leq 0}(\mathcal{G}):$
$H^0(X)\in\mathcal{T}\}$

$\mathcal{W}_\mathbf{t}=\{Y\in\mathcal{D}^{\geq -1}(\mathcal{G}):$
$H^{-1}(Y)\in\mathcal{F}\}$.
\end{center}
In this case, the heart $\mathcal{H}_\mathbf{t}$ consists of the
complexes $M$ such that $H^{-1}(M)\in\mathcal{F}$,
$H^0(M)\in\mathcal{T}$ and $H^k(M)= 0$, for all $k\neq -1,0$. We
will say that $\mathcal{H}_\mathbf{t}$ is the \emph{heart of the
torsion pair $\mathbf{t}$}.

When $\mathcal{G}=R-\text{Mod}$, we will frequently deal  with complexes $\cdots
\longrightarrow 0\longrightarrow
X\stackrel{j}{\longrightarrow}Q\stackrel{d}{\longrightarrow}P\longrightarrow
0 \longrightarrow \cdots$, concentrated in degrees $-2,-1,0$, such
that $j$ is a monomorphism and $P,Q$ are projective modules. All throughout the paper such a complex will be said to be a \emph{complex in standard form} and, without loss of generality, we will assume that $X$ is a submodule of $Q$ and $j$ is the inclusion. If $\mathbf{t}$ is a torsion pair in $R-\text{Mod}$, then each object of $\mathcal{H}_\mathbf{t}$ is quasi-isomorphic to a complex in standard form. Moreover, 
if $M$
and $N$ are two  complexes in standard form and they represent objects of
$\mathcal{H}_\mathbf{t}$,  then the canonical map
$\text{Hom}_{\mathcal{K}(R)}(M,N)\longrightarrow\text{Hom}_{\mathcal{D}(R)}(M,N)=\text{Hom}_{\mathcal{H}_\mathbf{t}}(M,N)$
is bijective. We will frequently use this fact throughout the paper.

 An object $T$ of
a triangulated category $\mathcal{D}$ will be called \emph{classical
tilting} when satisfies the following  conditions: i) $T$ is compact
in $\mathcal{D}$; ii) $\text{Hom}_\mathcal{D}(T,T[i])=0$, for all
$i\neq 0$;  and iii) if $X\in\mathcal{D}$ is an object such that
$\text{Hom}_\mathcal{D}(T[i],X)=0$, for all $i\in\mathbb{Z}$, then
$X=0$.  For instance, if $T$ is a classical 1-tilting $R$-module,
then $T[0]$ is a classical tilting object of $\mathcal{D}(R)$. By a
well-known result of Rickard (see \cite{R} and \cite{R2}), two rings
$R$ and $S$ are \emph{derived equivalent}, i.e., have equivalent
derived categories, if and only if there exists a classical tilting
object $T$ in $\mathcal{D}(R)$ such that
$S\cong\text{End}_{\mathcal{D}(R)}(T)^{op}$.

Let $P^\bullet :$ $\cdots \longrightarrow0\longrightarrow
Q\stackrel{d}{\longrightarrow}P\longrightarrow 0 \longrightarrow
\cdots $ be a complex of finitely generated projective $R$-modules
concentrated in degrees $-1$ and $0$. In \cite{HKM}, the authors
associated to such a complex a pair $(\mathcal{X}(P^\bullet
),\mathcal{Y}(P^\bullet ))$ of full subcategories of $R-\text{Mod}$
defined as follows, where $M$ is an $R$-module:

\begin{center}
$M\in\mathcal{X}(P^\bullet )$ $\Longleftrightarrow$
$\text{Hom}_{\mathcal{D}(R)}(P^\bullet ,M[1])=0$

$M\in\mathcal{Y}(P^\bullet )$ $\Longleftrightarrow$
$\text{Hom}_{\mathcal{D}(R)}(P^\bullet ,M[0])=0$.
\end{center}
Under some precise conditions (see \cite[Theorem 2.10]{HKM}), the
pair $(\mathcal{X}(P^\bullet ),\mathcal{Y}(P^\bullet ))$ is a torsion
pair in $R-\text{Mod}$. When this is the case, we shall say that
$P^\bullet$ is an \emph{HKM complex} and that
$\mathbf{t}=(\mathcal{X}(P^\bullet ),\mathcal{Y}(P^\bullet ))$ is the
associated \emph{HKM torsion pair}.

For any ring $R$, we shall denote by $V(R)$ the additive monoid
whose elements are the isoclasses of finitely generated projective
$R$-modules, where $[P]+[Q]=[P\oplus Q]$. For each two-sided ideal
$\mathbf{a}$ of the ring $R$, we have an obvious morphism of monoids
$V(R)\longrightarrow V(R/\mathbf{a})$ taking $[P]\rightsquigarrow
[P/\mathbf{a}P]$. This morphism need not be surjective. However, the
class of rings $R$ for which it is surjective, independently of
$\mathbf{a}$,  is very large and includes the so-called exchange
rings (see \cite[Lemma 3.2, Theorem 3.3]{A}). This class of rings
includes all rings which are Von Neumann regular modulo the Jacobson
radical and which have the lifting of idempotents property with
respect to this radical. In particular, it includes all
\emph{semiperfect rings}, i.e., those rings $R$ such that $R/J(R)$
is semisimple and idempotents lift modulo $J(R)$, where $J(R)$
denotes the Jacobson radical of $R$.
  All local and all (left or
right) artinian rings, in particular all Artin algebras, are
semiperfect rings.

For concepts not explicitly defined in the paper, the reader is
referred to \cite{P} or \cite{Po} for those concerning arbitrary and
abelian categories, to \cite{K} and \cite{S} for those concerning
rings and their module categories and to  \cite{N} and \cite{V} for
those concerning triangulated categories.

\section{When is the heart of a torsion pair a module category?}
All throughout the paper, $R$ will be a ring and
$\mathbf{t}=(\mathcal{T},\mathcal{F})$ will be a torsion pair in
$R-\text{Mod}$. Unless otherwise stated, the letter $G$ will be denote a complex in standard form.  
 Frequently,  such a complex will satisfy some or all of the following conditions with respect to $\mathbf{t}$, to which we will refer as the  \emph{standard conditions} (here $V=H^{0}(G)$):

\begin{enumerate}
\item  $\mathcal{T}=Pres(V)\subseteq \text{Ker}(\text{Ext}_{R}^{1}(V,?))$;
\item  $Q$ and $P$ are finitely generated projective $R$-modules;

%$Q$ and $P$ are finitely generated projective $R$-modules and $j$ is a monomorphism
%such that $H^{-1}(G)\in\mathcal{F}$;
\item $H^{-1}(G)\in\mathcal{F}$ and $H^{-1}(G) \subseteq \text{Rej}_{\mathcal{T}}(\frac{Q}{X})$;
\item $\text{Ext}_R^1(\frac{Q}{X},?)$ vanishes on $\mathcal{F}$;
\item there is a morphism $\xymatrix{h:(\frac{Q}{X})^{(I)}\ar[r] & \frac{R}{t(R)}}$, for some set $I$, such that the cokernel of its
 restriction  to $(H^{-1}(G))^{(I)}$  is in  $\overline{Gen}(V)$.
\end{enumerate}

\begin{lemma} \label{lem.Hom and Ext in H}
Let $G$ be a complex in standard form and let $M$ be any $R$-module. The following assertions hold: 
%\begin{center}
%$\xymatrix{G:= & \cdots \ar[r] & 0 \ar[r] &  X \ar[r]^{j} & Q
%\ar[r]^{d} & P \ar[r] & 0 \ar[r] &\cdots}$
%\end{center}
%be a complex of $R$-modules with $P$ in degree 0, where $P$ and $Q$
%are projective and $j$ is a monomorphism, and let $M$ be any
%$R$-module. The following assertions hold:

\begin{enumerate}
\item There is an isomorphism $\xymatrix{\text{Hom}_R(H^0(G),M) \ar[r]^{\sim \hspace{0.1 cm}}& \text{Hom}_{\mathcal{D}(R)}(G,M[0])}$,
which is natural in $M$;

\item When we view $X$ as a submodule of $Q$ and $j$ as the inclusion, there are natural in $M$ exact sequences of abelian groups:

\begin{enumerate}
\item $\xymatrix{\text{Hom}_R(P,M) \ar[r] & \text{Hom}_R(Q/X,M) \ar[r] & \text{Hom}_{\mathcal{D}(R)}(G,M[1]) \ar[r] & 0}$.
\item $\xymatrix{\text{Hom}_R(Q,M) \ar[r] & \text{Hom}_R(X,M) \ar[r] & \text{Hom}_{\mathcal{D}(R)}(G,M[2])\ar[r] & 0}$.
\end{enumerate}
\end{enumerate}
\end{lemma}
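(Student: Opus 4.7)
The plan is to derive everything from a single distinguished triangle in $\mathcal{D}(R)$ together with the projectivity of $P$ and $Q$. View $P[0]$ as the subcomplex of $G$ consisting of $P$ in degree $0$ and zero elsewhere; the quotient $G/P[0]$ is the two-term complex $X\stackrel{j}{\hookrightarrow}Q$ placed in degrees $-2,-1$, and since $j$ is a monomorphism this quotient is quasi-isomorphic to $(Q/X)[1]$. Thus one obtains a triangle $P[0]\to G\to (Q/X)[1]\stackrel{+}{\to}$ in $\mathcal{D}(R)$, and everything will be extracted by applying $\Hom_{\mathcal{D}(R)}(-,M[n])$ to it.

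Since $P$ is projective, $\Hom_{\mathcal{D}(R)}(P[0],M[n])=\Ext_R^n(P,M)$ equals $\Hom_R(P,M)$ for $n=0$ and vanishes otherwise; and $\Hom_{\mathcal{D}(R)}((Q/X)[1],M[n])=\Ext_R^{n-1}(Q/X,M)$, which vanishes for $n\leq 0$. The resulting long exact sequence therefore collapses into
$$0\to\Hom_{\mathcal{D}(R)}(G,M[0])\to \Hom_R(P,M)\stackrel{\phi}{\to}\Hom_R(Q/X,M)\to\Hom_{\mathcal{D}(R)}(G,M[1])\to 0$$
together with an isomorphism $\Ext_R^1(Q/X,M)\cong\Hom_{\mathcal{D}(R)}(G,M[2])$. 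A direct inspection of the connecting map shows that $\phi$ is precomposition with the morphism $\bar d:Q/X\to P$ induced by $d$ (which is well defined since $dj=0$).

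From this, part (1) follows because $\ker(\phi)=\{f:P\to M\mid fd=0\}=\Hom_R(P/\Im(d),M)=\Hom_R(H^0(G),M)$. Part (2)(a) is then the right half of the four-term sequence. For part (2)(b), apply $\Hom_R(-,M)$ to the short exact sequence $0\to X\to Q\to Q/X\to 0$: since $\Ext_R^1(Q,M)=0$, one obtains an exact sequence $\Hom_R(Q,M)\to\Hom_R(X,M)\to\Ext_R^1(Q/X,M)\to 0$, and composing with the isomorphism above yields the desired sequence. Naturality in $M$ in each case is automatic from the functoriality of every construction used.

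The only non-routine point is the explicit identification of $\phi$ as precomposition with $\bar d$; once this is checked, the rest is formal cohomological bookkeeping and no genuine obstacle is expected.
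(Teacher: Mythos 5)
Your proof is correct, and it uses the same central tool as the paper: the distinguished triangle relating $P[0]$, $G$, and the stalk $(Q/X)$ (the paper writes it as $(Q/X)[0]\to P[0]\to G\stackrel{+}{\to}$, which is a rotation of yours). The difference is organizational: the paper invokes a second triangle, the canonical truncation $H^{-1}(G)[1]\to G\to H^0(G)[0]\stackrel{+}{\to}$, to obtain assertion (1), and for (2)(b) it argues directly at the level of chain maps, observing that any morphism $G\to M[2]$ in $\mathcal{D}(R)$ is represented by a homomorphism $f\colon X\to M$ which is zero in $\mathcal{D}(R)$ iff $f$ factors through $j$. You instead extract all three statements from the single triangle: (1) by identifying $\ker(\phi)=\{f\colon P\to M:fd=0\}\cong\text{Hom}_R(H^0(G),M)$, and (2)(b) by splicing the isomorphism $\text{Ext}^1_R(Q/X,M)\cong\text{Hom}_{\mathcal{D}(R)}(G,M[2])$ with the $\text{Ext}$ sequence of $0\to X\to Q\to Q/X\to 0$. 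This is a modest but genuine streamlining; the only point you rightly flag as requiring verification — that the boundary map $\phi$ is (up to sign) precomposition with $\bar d$ — is standard for the triangle arising from a mapping cone, and the sign is immaterial for exactness.
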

\begin{proof}
We have triangles in $\mathcal{D}(R)$:

\begin{center}
$\xymatrix{H^{-1}(G)[1] \ar[r] &  G \ar[r] &
H^0(G)[0]\ar[r]^{\hspace{0.6 cm}+} &}$

and

$\xymatrix{Q/X[0]\ar[r] & P[0] \ar[r] & G\ar[r]^{+} &}$.
\end{center}
Applying the cohomological  functor
$\text{Hom}_{\mathcal{D}(R)}(?,M[0])$ and looking at the
corresponding long exact sequences, we obtain assertions 1 and 2.a.
On the other hand, one easily sees that a morphism
$\xymatrix{G\ar[r] & M[2]}$ in $\mathcal{D}(R)$ is represented by an
$R$-homomorphism $\xymatrix{f:X \ar[r] & M}$. The former morphism is
the zero morphism in $\mathcal{D}(R)$ precisely when $f$ factors
through $j$. Then the exact sequence in 2.b follows immediatly.
\end{proof}

\begin{lemma} \label{lem.description  de t}
If $G$ is a progenerator of $\mathcal{H}_{\mathbf{t}}$, then the
following assertions hold, where $V:=H^{0}(G)$:

\begin{enumerate}
\item $\mathcal{T}=Gen(V)=Pres(V)$, and hence  $\mathcal{F}=Ker(Hom_{R}(V,?))$;

\item $V$ is a finitely presented $R$-module;

\item $V$ is a classical quasi-tilting $R$-module.
\end{enumerate}
\end{lemma}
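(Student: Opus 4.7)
My plan is to exploit the Gabriel--Mitchell result cited in Section 2: since $G$ is a progenerator of $\mathcal{H}_\mathbf{t}$, the functor $\text{Hom}_{\mathcal{D}(R)}(G,-)$ provides an equivalence $\mathcal{H}_\mathbf{t}\simeq S\text{-Mod}$ for $S=\text{End}_{\mathcal{D}(R)}(G)^{op}$, and to combine this with Lemma \ref{lem.Hom and Ext in H} to transfer information between $\mathcal{H}_\mathbf{t}$ and $R\text{-Mod}$. The initial observation is that $V=H^0(G)$ lies in $\mathcal{T}$ (because $G\in\mathcal{H}_\mathbf{t}$) and that the canonical triangle $H^{-1}(G)[1]\to G\to V[0]\stackrel{+}{\longrightarrow}$ is a short exact sequence inside $\mathcal{H}_\mathbf{t}$, exhibiting $V[0]$ as a cyclic quotient of the progenerator.

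For (1) I will first show $\mathcal{T}=\text{Gen}(V)$. For $T\in\mathcal{T}$, the stalk $T[0]\in\mathcal{H}_\mathbf{t}$ admits an epimorphism $G^{(I)}\twoheadrightarrow T[0]$ (since $G$ generates the module category $\mathcal{H}_\mathbf{t}$); applying $H^0$, which is right exact on $\mathcal{H}_\mathbf{t}$ and commutes with coproducts, produces a surjection $V^{(I)}\twoheadrightarrow T$. The reverse inclusion is automatic since $V\in\mathcal{T}$ and $\mathcal{T}$ is closed under quotients and coproducts. Iterating this construction on the kernel of $G^{(I)}\twoheadrightarrow T[0]$ in $\mathcal{H}_\mathbf{t}$ (whose $H^0$ again lies in $\text{Gen}(V)$) delivers $\mathcal{T}=\text{Pres}(V)$. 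The identification $\mathcal{F}=\text{Ker}(\text{Hom}_R(V,?))$ then follows: one inclusion from $V\in\mathcal{T}$, the other from the fact that if $\text{Hom}_R(V,M)=0$ then $t(M)\in\text{Gen}(V)$ can admit no nonzero map from $V$, forcing $t(M)=0$.

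Assertion (2), the finite presentation of $V$, is the main obstacle. The tools at hand are the compactness of $G$ in $\mathcal{H}_\mathbf{t}$---which via Lemma \ref{lem.Hom and Ext in H}(1) makes $\text{Hom}_R(V,-)$ preserve coproducts of modules in $\mathcal{T}$---the equality $\mathcal{T}=\text{Pres}(V)$ just proved, and the structural triangle for $G$. My plan is to use these to replace $G$ by an equivalent standard-form complex whose projectives $P$ and $Q$ are finitely generated, so that the exact sequence $Q\to P\to V\to 0$ directly witnesses $V$ as finitely presented. The difficulty lies in making this finite replacement rigorous, since standard form only requires $P$ and $Q$ projective.

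For (3) I first establish the quasi-tilting equality $\text{Gen}(V)=\overline{\text{Gen}}(V)\cap\text{Ker}(\text{Ext}_R^1(V,?))$. The $\subseteq$ direction uses the long exact sequence obtained by applying $\text{Hom}_{\mathcal{D}(R)}(-,T[1])$ to the triangle $H^{-1}(G)[1]\to G\to V[0]\stackrel{+}{\longrightarrow}$: the projectivity of $G$ in $\mathcal{H}_\mathbf{t}$ forces $\text{Hom}_{\mathcal{D}(R)}(G,T[1])=0$ for $T\in\mathcal{T}$, and combined with the vanishing $\text{Hom}_{\mathcal{D}(R)}(H^{-1}(G)[2],T[1])=\text{Hom}_R(H^{-1}(G),T[-1])=0$ of the adjacent term this yields $\text{Ext}_R^1(V,T)=0$. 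For the reverse inclusion, given $M\in\overline{\text{Gen}}(V)\cap\text{Ker}(\text{Ext}_R^1(V,?))$, I would embed $M$ into some $N\in\mathcal{T}$ and exploit the long exact Ext sequence together with the Ext-vanishing on $\mathcal{T}$ established above to force $M$ to coincide with its $V$-trace, hence to lie in $\text{Gen}(V)$. The upgrade from quasi-tilting to classical quasi-tilting then follows from finite generation of $V$ (by (2)) via \cite[Proposition 2.1]{CDT1}.
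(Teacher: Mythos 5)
Your arguments for assertion (1) follow the paper's own route (applying the right-exact, coproduct-preserving $H^0$ to a presentation of $T[0]$ by copies of $G$), and your sketch of one direction of the quasi-tilting identity in (3) is fine in spirit. The problem is assertion (2), which you yourself flag as incomplete and which is where the real content lies.

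Your proposed plan for (2) --- pass to a standard-form complex quasi-isomorphic to $G$ and arrange for $P$ and $Q$ to be finitely generated --- would be circular. The paper does prove exactly such a finiteness statement (Lemma~\ref{lem.Q/X is finitely generated new}: $Q/X$ is finitely generated when $G$ is a progenerator), but its proof \emph{begins} by citing the present lemma to know that $V$ is finitely presented; the direct system $(G_\lambda)$ there is constructed so that $H^0$ is already fixed and finitely presented, and the finitely-generated claim is extracted from the compactness of $G$ only once that is in place. You cannot appeal to that finiteness of $Q$ in order to deduce finite presentation of $V$.

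The paper's actual argument for (2) is of a quite different flavor: it works entirely with direct limits. Since $\mathcal{H}_\mathbf{t}$ is a module category it is AB5, so by \cite[Theorem 4.8]{PS} the class $\mathcal{F}$ is closed under direct limits in $R\text{-Mod}$. Compactness of $G$ together with \cite[Proposition 4.2]{PS} (which identifies $\varinjlim_{\mathcal{H}_\mathbf{t}}T_i[0]$ with $(\varinjlim T_i)[0]$ for direct systems in $\mathcal{T}$) shows that $\text{Hom}_R(V,?)$ commutes with direct limits \emph{inside $\mathcal{T}$}. The closure of $\mathcal{F}$ under direct limits then gives $\varinjlim t(M_i)\cong t(\varinjlim M_i)$ for an arbitrary direct system $(M_i)$ in $R\text{-Mod}$, and combining these two facts (and that $\text{Hom}_R(V,M)=\text{Hom}_R(V,t(M))$ because $V\in\mathcal{T}$) shows $\text{Hom}_R(V,?)$ preserves all direct limits, i.e.\ $V$ is finitely presented. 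This direct-limit route is what you are missing, and it is not a detail: without it, (2) remains unproved and (3) (which relies on (2) for the upgrade to classical quasi-tilting) falls with it.
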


\begin{proof}
By hypothesis $\mathcal{H}_{\mathbf{t}}$ is a module category, in
particular $\mathcal{H}_{\mathbf{t}}$ is an AB5 category, so that
$\mathcal{F}$ is closed under taking direct limits in $R\text{-Mod}$
(see \cite[Theorem 4.8]{PS}). On the other hand, by \cite[Lemma
4.1]{PS}, the functor $\xymatrix{H^0:\mathcal{H}_\mathbf{t} \ar[r] &
R\text{-Mod}}$ is right exact and preserves coproducts. When applied
to an exact sequence $\xymatrix{G^{(I)} \ar[r] & G^{(J)} \ar[r] &
T[0] \ar[r] & 0}$ in $\mathcal{H}_\mathbf{t}$,  we get that
$T\in\text{Pres}(V)$, for each $T\in\mathcal{T}$. We then get that
$\mathcal{T}=Pres(V)$, and assertion 1  follows from
\cite[Proposition 2.2]{MT}.

Without loss of generality we can assume that G is in standard form. If $(T_i)_{i \in I}$ is a direct system
in $\mathcal{T}$, then
$\varinjlim_{\mathcal{H}_{\mathbf{t}}}{(T_i[0])}\cong
(\varinjlim{T_i})[0]$ (see \cite[Proposition 4.2]{PS}). We then get
that $Hom_{R}(V,?)$ preserves direct limits of objects in
$\mathcal{T}$  since $G$ is a compact object of
$\mathcal{H}_{\mathbf{t}}$.
 Let now $(M_i)_{i\in I}$ be any direct system in
R-Mod. We then get that $\varinjlim{t(M_i)}\cong t(\varinjlim
{M_i})$ since $\varinjlim\mathcal{F}=\mathcal{F}$. We now have
isomorphisms

{\small
\begin{center}
$\xymatrix{\varinjlim\text{Hom}_R(V,M_i) &
\varinjlim\text{Hom}_R(V,t(M_i)) \ar[r]^{\sim \hspace{1.7 cm}}
\ar[l]_{\sim} & \text{Hom}_R(V,\varinjlim
t(M_i))=\text{Hom}_R(V,t(\varinjlim M_i))
\ar[r]^{\hspace{1.8cm}\sim}&\text{Hom}_R(V,\varinjlim M_i)}$.
\end{center}}
Then assertion 2 follows. Finally, assertion 3
 follows  from \cite[Proposition 2.4]{MT}, from assertions 1 and
 2 and from \cite[Proposition 2.1]{CDT1}.

\end{proof}

The following result is inspired by \cite[Proposition 5.9]{CMT}.
\begin{lemma}\label{lem.generate (1:t)R}
Let $G$ be a complex in standard form. If $G$ is a projective object of $\mathcal{H}_{\mathbf{t}}$ such that
%Let
%\begin{center}
%$\xymatrix{G:= &  \cdots \ar[r] & 0 \ar[r] & X \ar[r]^{j} & Q
%\ar[r]^{d} & P \ar[r] & 0 \ar[r] & \cdots}$
%\end{center}
%be a complex of $R$-modules with $P$ in degree 0, where $P$ and $Q$
%are projective and $j$ is a monomorphism. If $G$ is a projective object of $\mathcal{H}_{\mathbf{t}}$ such that
$\mathcal{T}=Gen(V)=Pres(V)$, where $V:=H^{0}(G)$, then the
following assertions hold:
\begin{enumerate}
\item $\frac{M}{t(M)}[1]\in Gen_{\mathcal{H}_{\mathbf{t}}}(G)$, for each $M\in \overline{Gen}(V)$;

\item $\frac{R}{t(R)}[1]\in Gen_{\mathcal{H}_{\mathbf{t}}}(G)$ if, and only if, $G$ satisfies the standard condition 5.
\end{enumerate}
\end{lemma}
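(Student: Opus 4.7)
The plan is to reduce assertion 1 to showing that $T[0]\in Gen_{\mathcal{H}_{\mathbf{t}}}(G)$ for every $T\in\mathcal{T}$, and then derive assertion 2 from assertion 1 together with the fact that $Gen_{\mathcal{H}_{\mathbf{t}}}(G)$ is closed under extensions in $\mathcal{H}_{\mathbf{t}}$, a consequence of the projectivity of $G$ via the usual lifting-along-epimorphisms argument.

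For the intermediate claim, given $T\in\mathcal{T}=Pres(V)$, I pick a presentation $V^{(J)}\to V^{(I)}\twoheadrightarrow T$ and, via Lemma \ref{lem.Hom and Ext in H}.1, lift the epimorphism $V^{(I)}\to T$ to a morphism $\beta:G^{(I)}\to T[0]$ in $\mathcal{D}(R)$. A direct cohomology computation of the cone $C$ of $\beta$ gives $H^{-1}(C)=\ker(V^{(I)}\to T)$, which is a quotient of $V^{(J)}$ and therefore in $\mathcal{T}$, together with $H^{0}(C)=0$; hence $C\in\mathcal{U}_{\mathbf{t}}[1]$, the heart-cokernel $H^{0}_{\mathbf{t}}(C)$ vanishes, and $\beta$ is an epimorphism in $\mathcal{H}_{\mathbf{t}}$. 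For arbitrary $M\in\overline{Gen}(V)$, I embed $M\hookrightarrow M'$ with $M'\in Gen(V)$ and quotient $0\to M\to M'\to M'/M\to 0$ by $t(M)$, obtaining $0\to M/t(M)\to M'/t(M)\to M'/M\to 0$. Since $M'/t(M)$ and $M'/M$ are quotients of $M'\in\mathcal{T}$ they lie in $\mathcal{T}$, while $M/t(M)\in\mathcal{F}$, so this sequence lifts to a short exact sequence $0\to (M'/t(M))[0]\to (M'/M)[0]\to (M/t(M))[1]\to 0$ in $\mathcal{H}_{\mathbf{t}}$, whose rightmost term is $G$-generated because its middle term is.

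For assertion 2, Lemma \ref{lem.Hom and Ext in H}.2.a identifies morphisms $\alpha:G^{(I)}\to (R/t(R))[1]$ with $R$-maps $h:(Q/X)^{(I)}\to R/t(R)$ modulo factoring through $\bar{d}$; the map induced on $(-1)$-cohomology is precisely $\phi:=h|_{(H^{-1}(G))^{(I)}}$, so the cone $C$ of $\alpha$ fits into $0\to \text{coker}(\phi)\to H^{-1}(C)\to V^{(I)}\to 0$ with $H^{0}(C)=0$. For the direct implication, an epi $\alpha$ forces $H^{-1}(C)\in\mathcal{T}$ by the same $\mathcal{U}_{\mathbf{t}}[1]$-argument as above, so $\text{coker}(\phi)$ is a submodule of a $\mathcal{T}$-object and hence in $\overline{Gen}(V)$. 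Conversely, given $h$ as in condition 5, the same sequence shows $H^{-1}(C)\in\overline{Gen}(V)$. Identifying the heart-cokernel of $\alpha$ as $(H^{-1}(C)/t(H^{-1}(C)))[1]$, assertion 1 applied to $H^{-1}(C)$ places this cokernel in $Gen_{\mathcal{H}_{\mathbf{t}}}(G)$; combined with the image of $\alpha$ (trivially $G$-generated as a quotient of $G^{(I)}$) and the extension-closure recalled above, this yields $(R/t(R))[1]\in Gen_{\mathcal{H}_{\mathbf{t}}}(G)$.

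The main technical obstacle is the identification of the heart-cokernel $H^{0}_{\mathbf{t}}(C)$ with $(H^{-1}(C)/t(H^{-1}(C)))[1]$ for a cone $C$ concentrated in cohomological degrees $[-2,-1]$. Applying $H^{\bullet}_{\mathbf{t}}$ to the truncation triangle $H^{-2}(C)[2]\to C\to H^{-1}(C)[1]\to$ reduces this to the sub-formula $H^{0}_{\mathbf{t}}(L[1])=(L/t(L))[1]$ for an arbitrary $R$-module $L$, which in turn is extracted from the triangle $t(L)[1]\to L[1]\to (L/t(L))[1]\to t(L)[2]$ by noting that $t(L)[1]\in\mathcal{U}_{\mathbf{t}}[1]$ and $t(L)[2]\in\mathcal{U}_{\mathbf{t}}[2]$ both lie strictly below the heart, while $(L/t(L))[1]\in\mathcal{H}_{\mathbf{t}}$.
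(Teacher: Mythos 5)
Your proof is correct and, for assertion 2, follows essentially the same architecture as the paper: represent a map $\alpha\colon G^{(I)}\to (R/t(R))[1]$ by a module homomorphism $h$ on $(Q/X)^{(I)}$, compute the cone, identify its heart-cohomology, and then use assertion 1 plus the projectivity of $G$ (which makes $\mathrm{Gen}_{\mathcal{H}_\mathbf{t}}(G)$ closed under extensions) to conclude. You phrase the identification $H^0_\mathbf{t}(C)\cong (H^{-1}(C)/t(H^{-1}(C)))[1]$ via truncation triangles, while the paper draws an octahedral diagram and invokes \cite{BBD}; these are equivalent. The real difference is assertion 1: the paper simply cites \cite[Lemma 5.8, Prop.~5.9]{CMT}, whereas you give a direct self-contained argument, first showing $\mathcal{T}[0]\subseteq\mathrm{Gen}_{\mathcal{H}_\mathbf{t}}(G)$ via the cone of a map $G^{(I)}\to T[0]$ induced by a $\mathrm{Pres}(V)$-presentation, then bootstrapping to $\overline{\mathrm{Gen}}(V)$ via the short exact sequence $0\to M/t(M)\to M'/t(M)\to M'/M\to 0$. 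This is a genuine improvement in self-containedness.

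One step deserves a word more than you give it. When you say ``given $h$ as in condition 5, the same sequence shows $H^{-1}(C)\in\overline{\mathrm{Gen}}(V)$,'' you are implicitly using that the class $\overline{\mathrm{Gen}}(V)=\sigma[V]$ is closed under extensions by $\mathcal{T}$-objects. This is \emph{not} automatic (the Wisbauer category $\sigma[V]$ is in general not closed under extensions in $R\text{-Mod}$), and the paper delegates exactly this point to \cite[Lemma 5.6]{CMT}. The fact does hold in the present setting: writing $Z=\mathrm{coker}(\phi)$, choose a monomorphism $Z\hookrightarrow T$ with $T\in\mathcal{T}=\mathrm{Gen}(V)$ and form the pushout of $0\to Z\to H^{-1}(C)\to V^{(I)}\to 0$ along $Z\hookrightarrow T$; the resulting extension of $V^{(I)}$ by $T$ lies in $\mathcal{T}$ because $\mathcal{T}$ is a torsion class (hence extension-closed), and $H^{-1}(C)$ embeds into it. You should make this argument (or the citation) explicit; as written it is a small gap, but an easily repaired one.
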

\begin{proof}
%Assertion 1  essentially follows from \cite[Lemma 5.8 and
%Proposition 5.9]{CMT}, but, for the sake of completeness, we give a
%short proof. It is clear that $\overline{Gen}(V)$ is closed under
%quotients, so that $\frac{M}{t(M)}\in \overline{Gen}(V)$ for each
%$M\in \overline{Gen}(V)$. Then there is an exact sequence of the
%form (see \cite[Lemma 5.6]{CMT}):
%$$\xymatrix{0 \ar[r] & \frac{M}{t(M)} \ar[r] & T \ar[r] & V^{(I)} \ar[r] &
%0},$$ for some set $I$, where $T\in \mathcal{T}$. Thus, we have the
%following exact sequence in $\mathcal{H}_{\mathbf{t}}$:
%$$\xymatrix{0  \ar[r] & T[0] \ar[r] & V^{(I)}[0] \ar[r] & \frac{M}{t(M)}[1] \ar[r] & 0}$$
%Now the assertion  follows from the fact that we have an epimorphism
%$G^{(I)}\twoheadrightarrow V^{(I)}[0]$ in $\mathcal{H}_\mathbf{t}$.
With an easy adaptation, assertion 1 follows from \cite[Lemma 5.8 and Proposition 5.9]{CMT}.

We now prove assertion 2. For the \emph{only if} part, by hypothesis, there are a set $I$ and an exact sequence 
$$\xymatrix{0 \ar[r] & K \ar[r] & G^{(I)} \ar@{>>}^{p}[r] & \frac{R}{t(R)}[1] \ar[r] & 0}$$
in $\mathcal{H}_{\mathbf{t}}$. It is easy to see that $p$ is represented by an $R$-homomorphism $p^{-1}:(\frac{Q}{X})^{(I)} \xymatrix{\ar[r] & } \frac{R}{t(R)}$. Moreover, we have that $H^{-1}(p)$ coincides with the restriction of $p^{-1}$ to $(H^{-1}(G))^{(I)}$. Now, we consider the long exact sequence associated to the above triangle:

$$\xymatrix{0 \ar[r] & H^{-1}(K) \ar[r] & H^{-1}(G)^{(I)} \ar[rr]^{H^{-1}(p)} && \frac{R}{t(R)} \ar[r] & H^{0}(K) \ar[r] & V^{(I)} \ar[r] & 0}$$
The result follows by putting $h=p^{-1}$ since $H^{0}(K)\in \mathcal{T}$. 

For the \emph{if part},  assume that the standard condition 5 holds and let $Z$ be the cokernel of the  restriction of $h$ to
$(H^{-1}(G))^{(I)}$.
 Clearly, we can extend $h$ to a morphism from
$G^{(I)}$ to $\frac{R}{t(R)}[1]$ in $D(R)$, which  we denote by
$\bar{h}$. We now complete $\bar{h}$ to a triangle in $D(R)$, we
get:
$$\xymatrix{M \ar[r] & G^{(I)} \ar[r]^{\bar{h} \hspace{0.2 cm}} & \frac{R}{t(R)}[1] \ar[r]^{\hspace{0.4cm}+} &  & }$$

Using the long exact sequence of homologies, we then obtain an exact
sequence in $R$-Mod of the form:

$$\xymatrix{0 \ar[r] & Z \ar[r] & H^{0}(M) \ar[r] & V^{(\alpha)} \ar[r]& 0}$$

By \cite[Lemma 5.6]{CMT}, we get that $H^{0}(M) \in
\overline{Gen}(V)$ and then, by assertion 1, we also get that
$\frac{H^{0}(M)}{t(H^{0}(M))}[1] \in
Gen_{\mathcal{H}_{\mathbf{t}}}(G)$. Consider now the following
diagram commutative
\begin{small}
$$\xymatrix{&&&& &\\&&& t(H^{0}(M))[1]\ar[dd] \ar[ru]^{+} & \\ & N \ar[rru] \ar[rd] & && \\ H^{-1}(M)[2] \ar[rr] \ar[ru] & &M[1] \ar[r] \ar[rd]& H^{0}(M)[1] \ar[r]^{\hspace{0.6 cm}+} \ar[d] & \\ &&& \frac{H^{0}(M)}{t(H^{0}(M))}[1] \ar[rd]^{\hspace{0.2 cm} +}& \\ &&&&   }$$
\end{small}
Note that $N\in \mathcal{H}_{\mathbf{t}}[1]$. By \cite{BBD}, we get that 
$Coker_{\mathcal{H}_{\mathbf{t}}}(\bar{h})\cong
\frac{H^{0}(M)}{t(H^{0}(M))}[1]$. Then we have the following diagram
with exact row in $\mathcal{H}_{\mathbf{t}}$:

$$\xymatrix{&& G^{(J)} \ar@{>>}[d]^{p} \ar@{-->}[ld]_{p'} &\\G^{(I)} \ar[r]^{\bar{h}\hspace{0.2cm}} & \frac{R}{t(R)}[1] \ar[r] &
\frac{H^{0}(M)}{t(H^{0}(M))}[1] \ar[r] & 0}$$

where $p$ is an epimorphism  and $p'$ is obtained by the
projectivity of $G^{(J)}$ in $\mathcal{H}_\mathbf{t}$. It follows
that $\xymatrix{(\bar{h} \hspace{0.3cm} p'):G^{(I)}\coprod
G^{(J)}\ar[r] & \frac{R}{t(R)}[1]}$ is also an epimorphism in
$\mathcal{H}_{\mathbf{t}}$.

\end{proof}

We are now able to give a general criterion for
$\mathcal{H}_\mathbf{t}$ to be a module category.

\begin{teor}\label{prop.module heart of torsion pair}
Let $R$ be a ring and $\mathbf{t}$ be a torsion pair in $R-\text{Mod}$. A complex $G$  is a progenerator of the heart $\mathcal{H}_\mathbf{t}$  if, and only
if, it is quasi-isomorphic to a complex in standard form satisfying the  standard conditions 1-5. 
In particular $\mathcal{H}_\mathbf{t}$ is a module category if, and only if, this latter complex exists. 

%there is a complex in standard form which satisfies the standard conditions 1-5.%there is a chain complex of $R$-modules
%\begin{center}
%$\xymatrix{G:= & \cdots \ar[r] & 0 \ar[r] &  X\ar[r]^{j} &
%Q\ar[r]^{d} & P \ar[r] & 0 \ar[r] & \cdots}$,
%\end{center}
%with $P$ in degree $0$, satisfying the following properties, where
%$V:=H^{0}(G)$:
%\begin{enumerate}
%\item  $\mathcal{T}=Pres(V)\subseteq \text{Ker}(\text{Ext}_{R}^{1}(V,?))$;
%\item $Q$ and $P$ are finitely generated projective $R$-modules and $j$ is a monomorphism
%such that $H^{-1}(G)\in\mathcal{F}$;
%\item $H^{-1}(G) \in \text{Rej}_{\mathcal{T}}(\frac{Q}{X})$;
%\item $\text{Ext}_R^1(\text{Coker}(j),?)$ vanishes on $\mathcal{F}$;
%\item there is a morphism $\xymatrix{h:(\frac{Q}{X})^{(I)}\ar[r] & \frac{R}{t(R)}}$, for some set $I$, such that the cokernel of its
 %restriction  to $(H^{-1}(G))^{(I)}$  is in  $\overline{Gen}(V)$.
%\end{enumerate}
\end{teor}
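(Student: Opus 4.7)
My plan is to prove the two implications separately, verifying in each case the four attributes of a progenerator (membership in $\mathcal{H}_\mathbf{t}$, compactness, projectivity, and generation) one by one, leaning heavily on Lemmas \ref{lem.Hom and Ext in H}, \ref{lem.description  de t} and \ref{lem.generate (1:t)R}. For the ``only if'' direction, assume $G$ is a progenerator and, after a quasi-isomorphism, that $G$ is in standard form. Lemma \ref{lem.description  de t} already yields $\mathcal{T}=\text{Gen}(V)=\text{Pres}(V)$ with $V$ classical quasi-tilting, so condition $1$ follows (the inclusion $\mathcal{T}\subseteq\text{Ker}(\text{Ext}_R^1(V,?))$ is part of being quasi-tilting). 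Projectivity of $G$ says $\text{Hom}_{\mathcal{D}(R)}(G,N[1])=0$ for every $N\in\mathcal{H}_\mathbf{t}$; evaluating at $N=F[1]$ with $F\in\mathcal{F}$ and reading Lemma \ref{lem.Hom and Ext in H}(2.b) against $0\to X\to Q\to Q/X\to 0$ yields condition $4$, while evaluating at $N=T[0]$ with $T\in\mathcal{T}$ and combining Lemma \ref{lem.Hom and Ext in H}(2.a) with the factorization $Q/X\twoheadrightarrow\text{im}(d)\hookrightarrow P$ forces every map $Q/X\to T$ to kill $H^{-1}(G)$, giving condition $3$. Since $R/t(R)[1]\in\mathcal{H}_\mathbf{t}$ must lie in $\text{Gen}_{\mathcal{H}_\mathbf{t}}(G)$, Lemma \ref{lem.generate (1:t)R}(2) supplies condition $5$. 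Finally, compactness of $G$ in $\mathcal{H}_\mathbf{t}$ transfers to compactness in $\mathcal{D}(R)$, because $\mathcal{H}_\mathbf{t}$ is Grothendieck by \cite[Theorem 4.9]{PS} and the inclusion $\mathcal{H}_\mathbf{t}\hookrightarrow\mathcal{D}(R)$ then preserves coproducts; so $G$ is quasi-isomorphic to a bounded complex of finitely generated projectives, which allows us to arrange $P$ and $Q$ finitely generated projective in the standard form, giving condition $2$.

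For the ``if'' direction, assume $G$ is in standard form satisfying conditions $1$--$5$. Then $G\in\mathcal{H}_\mathbf{t}$ is immediate from conditions $1$ and $3$, and $G$ is compact in $\mathcal{D}(R)$ by condition $2$. To show $G$ is projective in $\mathcal{H}_\mathbf{t}$, I would split $\text{Ext}^1_{\mathcal{H}_\mathbf{t}}(G,N)=\text{Hom}_{\mathcal{D}(R)}(G,N[1])$ along the truncation triangle $H^{-1}(N)[1]\to N\to H^0(N)[0]\stackrel{+}{\to}$: Lemma \ref{lem.Hom and Ext in H}(2.b) together with condition $4$ kills $\text{Hom}_{\mathcal{D}(R)}(G,H^{-1}(N)[2])$, while condition $1$ (through $\text{Ext}_R^1(V,H^0(N))=0$) and condition $3$ (through $H^{-1}(G)\subseteq\text{Rej}_\mathcal{T}(Q/X)$) together force every $Q/X\to H^0(N)$ to factor through $P$, making the first arrow in Lemma \ref{lem.Hom and Ext in H}(2.a) surjective and killing $\text{Hom}_{\mathcal{D}(R)}(G,H^0(N)[1])$. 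For generation, the truncation triangle $H^{-1}(G)[1]\to G\to V[0]\stackrel{+}{\to}$ is a short exact sequence in $\mathcal{H}_\mathbf{t}$, so $V[0]\in\text{Gen}_{\mathcal{H}_\mathbf{t}}(G)$, and hence $\mathcal{T}[0]\subseteq\text{Gen}_{\mathcal{H}_\mathbf{t}}(G)$ by condition $1$. Lemma \ref{lem.generate (1:t)R}(2) combined with condition $5$ gives $R/t(R)[1]\in\text{Gen}_{\mathcal{H}_\mathbf{t}}(G)$; since $t(R)\cdot F=0$ for each $F\in\mathcal{F}$ (because $t(R)\cdot x$ lies in $\mathcal{T}\cap\mathcal{F}=0$), every such $F$ is a quotient of $(R/t(R))^{(J)}$ with kernel again in $\mathcal{F}$, so $\mathcal{F}[1]\subseteq\text{Gen}_{\mathcal{H}_\mathbf{t}}(G)$. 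Splicing these along the truncation triangle for an arbitrary $N\in\mathcal{H}_\mathbf{t}$ then produces an epimorphism from a coproduct of copies of $G$ onto $N$.

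The principal obstacle is to control the passage between coproducts computed in $\mathcal{H}_\mathbf{t}$ and those computed in $\mathcal{D}(R)$: for the generation step to yield a genuine epimorphism $G^{(I)}\to N$ inside $\mathcal{H}_\mathbf{t}$, I need the $\mathcal{D}(R)$-coproduct $G^{(I)}$ to lie in $\mathcal{H}_\mathbf{t}$, equivalently, the relevant coproducts of objects of $\mathcal{F}$ must remain in $\mathcal{F}$. Extracting this closure property from conditions $1$--$5$ is where the torsion-theoretic bookkeeping is subtlest, and it is also the bridge that lets compactness in $\mathcal{D}(R)$ descend to compactness in $\mathcal{H}_\mathbf{t}$.
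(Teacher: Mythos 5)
Your ``if'' direction is essentially sound and follows the same lines as the paper's (projectivity via the two triangles $Q/X[0]\to P[0]\to G\stackrel{+}{\to}$ and $H^{-1}(G)[1]\to G\to V[0]\stackrel{+}{\to}$, generation by reducing to $\mathcal{T}[0]$ and $\mathcal{F}[1]$ and then to $\frac{R}{t(R)}[1]$). But your ``only if'' direction has a genuine gap at the step where you derive standard condition~2. You claim that compactness of $G$ in $\mathcal{H}_\mathbf{t}$ transfers to compactness in $\mathcal{D}(R)$ because the inclusion $\mathcal{H}_\mathbf{t}\hookrightarrow\mathcal{D}(R)$ preserves coproducts. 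That inference does not go through: the inclusion preserving coproducts tells you that $\text{Hom}_{\mathcal{D}(R)}(G,-)$ commutes with coproducts \emph{of families drawn from $\mathcal{H}_\mathbf{t}$}, which is precisely what compactness in $\mathcal{H}_\mathbf{t}$ already means, and says nothing about coproducts of arbitrary families in $\mathcal{D}(R)$, nor about $\text{Hom}_{\mathcal{D}(R)}(G,(-)[n])$ for $n\neq 0$. In fact the conclusion itself is false in general: as the standard form $0\to X\to Q\to P\to 0$ makes clear, condition~2 controls only $P$ and $Q$, not $X$, and the paper's own examples (e.g.\ example \ref{exem.sum of stalks}, where the progenerator is $\frac{R}{\mathbf{a}}[0]\oplus\frac{\mathbf{a}}{t(\mathbf{a})}[1]$) allow $R/\mathbf{a}$ to have infinite projective dimension, in which case the progenerator cannot be compact in $\mathcal{D}(R)$. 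The paper instead proves that $Q/X$ is finitely generated by a direct-limit argument entirely internal to $\mathcal{H}_\mathbf{t}$ (lemma \ref{lem.Q/X is finitely generated new}: write $Q/X=\varinjlim A_\lambda$ over finitely generated submodules, observe the induced complexes $G_\lambda$ form a direct system in $\mathcal{H}_\mathbf{t}$ with $\varinjlim_{\mathcal{H}_\mathbf{t}}G_\lambda\cong G$ using \cite[Lemma~4.4]{PS}, and use that $G$ is finitely presented in $\mathcal{H}_\mathbf{t}$ to factor $1_G$ through some $G_\mu$). You need some substitute for that lemma; the short cut you propose does not exist.

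Two smaller points. First, the ``principal obstacle'' you flag at the end --- whether $\mathcal{F}$ is closed under coproducts so that $\mathcal{D}(R)$-coproducts of objects of $\mathcal{H}_\mathbf{t}$ stay in $\mathcal{H}_\mathbf{t}$ --- is not actually an obstacle: any torsionfree class in $R\text{-Mod}$ is closed under products and subobjects, hence under coproducts, so this holds automatically for every torsion pair, with no bookkeeping and no appeal to conditions~1--5. Second, descending compactness from $\mathcal{D}(R)$ to $\mathcal{H}_\mathbf{t}$ is indeed the direction you want in the ``if'' part, but the paper does not route through $\mathcal{D}(R)$ at all: it uses that $\mathcal{H}_\mathbf{t}$ is AB4 together with the short exact sequences $0\to H^{-1}(M)[1]\to M\to H^0(M)[0]\to 0$ to reduce compactness to the two explicit commutation statements (coproducts over $\mathcal{T}[0]$, controlled by $V$ finitely presented, and coproducts over $\mathcal{F}[1]$, controlled by $P$ and $Q/X$ finitely generated via lemma \ref{lem.Hom and Ext in H}(2)).
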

\begin{proof}
Let us assume that $G$ is  a complex in standard form which is in $\mathcal{H}_\mathbf{t}$. By lemma
\ref{lem.description  de t}, if $G$ is a progenerator of
$\mathcal{H}_\mathbf{t}$, then $V:=H^0(G)$ is finitely presented.
This allows us, for both implications in the proof,  to assume that
$P$ is a finitely generated projective $R$-module.

We claim that $G$ is a projective object in $\mathcal{H}_\mathbf{t}$
if, and only if, $\mathcal{T} \subseteq
\text{Ker}(\text{Ext}_{R}^{1}(V,?))$ and the standard conditions 3 and 4 hold.
Indeed each object $M\in\mathcal{H}_\mathbf{t}$ fits into an exact
sequence in this category

\begin{center}
$\xymatrix{0\ar[r] & H^{-1}(M)[1] \ar[r] & M \ar[r] & H^0(M)[0]
\ar[r] &  0 & (*)}$
\end{center}
Then $G$ is projective in $\mathcal{H}_\mathbf{t}$ if, and only if,
$0=\text{Ext}_\mathcal{H_\mathbf{t}}^1(G,T[0])=\text{Hom}_{\mathcal{D}(R)}(G,T[1])$
and
$0=\text{Ext}_\mathcal{H_\mathbf{t}}^1(G,F[1])=\text{Hom}_{\mathcal{D}(R)}(G,F[2])$,
for all $T\in\mathcal{T}$ and $F\in\mathcal{F}$. By lemma
\ref{lem.Hom and Ext in H}, the first equality holds if, and only
if, the  map
$\xymatrix{\text{Hom}_R(P,T)\ar[r]^{\bar{d}^{*}\hspace{0.2 cm}} &
\text{Hom}_R(Q/X,T)}$ is surjective, where
$\xymatrix{\bar{d}:Q/X\ar[r] & P}$ is the obvious $R$-homomorphism.
But, in turn, this last condition is equivalent to the sum of the
following two conditions, for each $T\in\mathcal{T}$:

\begin{enumerate}
\item[i)] Each $R$-homomorphism $\xymatrix{f:Q/X\ar[r] & T}$ vanishes on $H^{-1}(G)=\frac{\text{Ker}(d)}{X}$;
\item[ii)] Each morphism $\xymatrix{g:\text{Im}(\bar{d})=\text{Im}(d)\ar[r] &
T}$ extends to $P$.
\end{enumerate}
Condition i) is equivalent to the standard condition 3. On the other hand, condition ii) above is equivalent to
saying that $\text{Ext}_R^1(H^0(G),T)=0$, for all $T\in\mathcal{T}$.
Now, by lemma \ref{lem.Hom and Ext in H}, the equality
$\text{Hom}_{\mathcal{D}(R)}(G,F[2])=0$ holds when each
$R$-homomorphism $\xymatrix{g:X\ar[r] &F}$ extends to $Q$, for all
$F\in\mathcal{F}$. This is clearly equivalent to the standard condition 4.

Suppose that $G$ is projective in $\mathcal{H}_\mathbf{t}$ or its
equivalent conditions mentioned in the previous paragraph. Recall
from \cite[Section 4]{PS} that $\mathcal{H}_\mathbf{t}$ is AB4.
Applying this fact to any family of exact sequences as ($\ast$),  we
see that $G$ is a compact object of $\mathcal{H}_\mathbf{t}$ if, and
only if, the canonical morphisms

\begin{center}
$\xymatrix{\underset{i\in
I}{\coprod}\text{Hom}_{\mathcal{D}(R)}(G,T_i[0])\ar[r]
&\text{Hom}_{\mathcal{D}(R)}(G,(\underset{i\in I}{\coprod}
T_i)[0])}$

and

$\xymatrix{\underset{i\in I}{\coprod}
\text{Hom}_{\mathcal{D}(R)}(G,F_i[1])\ar[r]
&\text{Hom}_{\mathcal{D}(R)}(G,(\underset{i \in I}{\coprod}
F_i)[1])}$
\end{center}
are isomorphisms, for all families $(T_i)$ in $\mathcal{T}$ and
$(F_i)$ in $\mathcal{F}$. By lemma \ref{lem.Hom and Ext in H}, we
easily get that the first of these morphisms is an isomorphism
precisely when $V$ is a compact object of $\mathcal{T}$. On the
other hand, by lemma \ref{lem.Hom and Ext in H}(2),  the second
centered homomorphism is an isomorphism whenever $P$ and $Q/X$ are
finitely generated modules.  Therefore, if $G$ satisfies the standard 
conditions 1-4, then $G$ is a compact projective object
of $\mathcal{H}_{\mathbf{t}}$.

Suppose now that these last conditions hold. Then, due to the
canonical sequence $(\ast)$, we know that $G$ is a generator if, and
only if, each $M\in\mathcal{T}[0]\cup\mathcal{F}[1]$ is generated by
$G$. Note that we have an epimorphism $\xymatrix{q:G\ar@{>>}[r] &
V[0]}$ in $\mathcal{H}_\mathbf{t}$, which implies that
$\text{Gen}_{\mathcal{H}_\mathbf{t}}(V[0])\subseteq\text{Gen}_{\mathcal{H}_\mathbf{t}}(G)$.
But the equality $\mathcal{T}=\text{Gen}(V)=\text{Pres}(V)$ easily
gives that
$\mathcal{T}[0]\subseteq\text{Gen}_{\mathcal{H}_\mathbf{t}}(V[0])$.
On the other hand, each $F\in\mathcal{F}$ gives rise to an exact
sequence $\xymatrix{0 \ar[r] & F' \hspace{0.02cm}\ar@{^(->}[r] &
(\frac{R}{t(R)})^{(I)} \ar@{>>}[r] & F \ar[r] & 0}$ in
$R\text{-Mod}$ which, in turn, yields an exact sequence in
$\mathcal{H}_\mathbf{t}$:

\begin{center}
$\xymatrix{0 \ar[r] &
 F'[1] \ar[r] & (\frac{R}{t(R)})^{(I)}[1]\ar[r] &
F[1]\ar[r] & 0}$.
\end{center}
Thus, $G$ generates $\mathcal{H}_\mathbf{t}$ if, and only if, it
generates  $\frac{R}{t(R)}[1]$. By lemma \ref{lem.generate (1:t)R},
this is equivalent to the standard condition 5.

Note that the \emph{'if' part} of the proof follows from the
previous paragraphs. By  lemma \ref{lem.description  de t} and lemma
\ref{lem.generate (1:t)R}, in order to prove the \emph{'only if'
part}, we only need to prove that if  $G$ is a complex in standard form, with $P$ finitely generated,  and it is  a
progenerator of $\mathcal{H}_\mathbf{t}$, then $G$ is quasi-isomorphic to  a complex  satisfying the standard conditions 1-5. But Lemma \ref{lem.Q/X is finitely generated new} below shows that
 $Q/X$ is finitely generated, which allows us to replace $Q$ by a finitely generated projective module $Q'$ and get a complex

\begin{center}
$\xymatrix{ \cdots \ar[r] &  0 \ar[r] & X' \ar[r]&
Q'\ar[r]^{d'}& P \ar[r] & 0 \ar[r] &
 \cdots} $
\end{center}
which is quasi-isomorphic to $G$ and satisfies all the standard conditions.

\end{proof}

\begin{lemma} \label{lem.Q/X is finitely generated new}
Let $G$ be a complex in standard form with $P$ finitely generated. Suppose that $G$ is a progenerator of
%Let
%\begin{center}
%$\xymatrix{G:= & \cdots \ar[r] & 0 \ar[r] &  X\ar[r]^{j} &
%Q\ar[r]^{d} & P \ar[r] & 0 \ar[r] & \cdots }$
%\end{center}
%be a complex concentrated in degrees $-2,-1,0$ such that $j$ is a
%monomorphism, $Q$ and $P$ are projective and $P$ is finitely
%generated. Suppose that the complex represents a  progenerator  of
$\mathcal{H}_\mathbf{t}$. Then $\frac{Q}{X}$ is a finitely
generated $R$-module.
\end{lemma}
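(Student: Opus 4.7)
The plan is to exploit both compactness and projectivity of $G$ in $\mathcal{H}_\mathbf{t}$, combined with Lemma \ref{lem.Hom and Ext in H}(2.a), first to show that $\text{Hom}_R(Q/X,-)$ restricted to $\mathcal{F}$ commutes with coproducts, and then to analyze an ascending chain of finitely generated submodules of $Q/X$: the corresponding quotients must eventually lie in $\mathcal{T}$, and projectivity of $G$ will then force the chain to contain $H^{-1}(G)$, which together with the finite generation of $(Q/X)/H^{-1}(G)$ yields the result.

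First I would note that for each $F\in\mathcal{F}$ one has $\text{Hom}_R(V,F)=0$ by Lemma \ref{lem.description  de t}, so Lemma \ref{lem.Hom and Ext in H}(2.a) gives a short exact sequence $0\to\text{Hom}_R(P,F)\to\text{Hom}_R(Q/X,F)\to\text{Hom}_{\mathcal{D}(R)}(G,F[1])\to 0$. Since $\mathcal{F}$ is closed under coproducts in $R$-Mod, the coproduct in $\mathcal{H}_\mathbf{t}$ of objects of the form $F_i[1]$ coincides with $(\coprod F_i)[1]$. Combining the compactness of $G$ in $\mathcal{H}_\mathbf{t}$ with the preservation of coproducts by $\text{Hom}_R(P,-)$ (since $P$ is finitely generated projective), a 5-lemma argument then shows that $\text{Hom}_R(Q/X,-)$ restricted to $\mathcal{F}$ preserves coproducts.

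Next, I would write $Q/X=\bigcup_{n\in\mathbb{N}}M_n$ as an ascending chain of finitely generated submodules, set $N_n=(Q/X)/M_n$ and $\bar{N}_n=N_n/t(N_n)\in\mathcal{F}$, and consider the map $\Phi:Q/X\to\bigoplus_n\bar{N}_n$ whose $n$-th component is the composition $Q/X\twoheadrightarrow N_n\twoheadrightarrow\bar{N}_n$. Since each $m\in Q/X$ lies in some $M_{n_0}$, $\Phi$ is well defined. By the preceding step, $\Phi$ corresponds to an element of $\bigoplus_n\text{Hom}_R(Q/X,\bar{N}_n)$, hence there is $n_0$ with $\phi_n=0$ as a morphism for all $n\geq n_0$, forcing $N_n=t(N_n)\in\mathcal{T}$ for such $n$. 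Since $G$ is projective in $\mathcal{H}_\mathbf{t}$, $\text{Hom}_{\mathcal{D}(R)}(G,N_n[1])=\text{Ext}^1_{\mathcal{H}_\mathbf{t}}(G,N_n[0])=0$, so the chain map $G\to N_n[1]$ induced by the surjection $\psi_n:Q/X\twoheadrightarrow N_n$ is null-homotopic, yielding a factorization $\psi_n=s_n\circ\bar{d}$ for some $s_n:P\to N_n$ and hence $M_n=\ker\psi_n\supseteq\ker\bar{d}=H^{-1}(G)$. Finally, since $V$ is finitely presented by Lemma \ref{lem.description  de t} and $P$ is finitely generated, $(Q/X)/H^{-1}(G)\cong\text{Im}(\bar{d})=\ker(P\twoheadrightarrow V)$ is finitely generated; hence $(Q/X)/M_n$ is finitely generated, and since $M_n$ is finitely generated too, we conclude that $Q/X$ is finitely generated.

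The main obstacle I foresee is the implicit reduction to the countably generated case: for uncountably generated $Q/X$, no countable cofinal chain of finitely generated submodules need exist, so the map $\Phi$ may fail to land in the direct sum. Overcoming this will likely require first establishing countable generation of $Q/X$ as an a priori step, possibly by combining the $\mathcal{F}$-smallness of $Q/X$ from the first step with the generator condition on $G$ applied to the cyclic $R$-module $R/t(R)$ (standard condition 5).
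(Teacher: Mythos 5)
Your argument contains a genuine gap that you yourself flag at the end, and the proposed repair does not work. In step 2 you write $Q/X=\bigcup_{n\in\mathbb{N}}M_n$ as a union of a \emph{countable} ascending chain of finitely generated submodules, and the well-definedness of $\Phi:Q/X\to\bigoplus_n\bar{N}_n$ (landing in the \emph{coproduct} rather than the product) crucially depends on this: for an arbitrary directed family $(M_\lambda)$, an element $m$ lies in some $M_{\lambda_0}$, but the set of $\lambda$ not above $\lambda_0$ can be infinite, so the family of components need not be finitely supported. You cannot establish countable generation of $Q/X$ as an a priori step: proving countable generation here would essentially amount to proving the finite generation you are after, and neither $\mathcal{F}$-smallness of $Q/X$ (your step 1) nor standard condition 5 yields it. So the suggested workaround is not a fix.

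The source of the problem is that you only exploit compactness (commutation with coproducts), whereas the paper exploits the stronger fact that a progenerator of a module category is a \emph{finitely presented} object, i.e.\ $\text{Hom}_{\mathcal{H}_\mathbf{t}}(G,-)$ preserves direct limits. The paper's argument picks a finitely generated $A'\le Q/X$ with $\bar d(A')=\text{Im}(d)$ (using that $V$ is finitely presented, so $\text{Im}(d)$ is finitely generated), considers the directed system $(A_\lambda)$ of \emph{all} finitely generated submodules containing $A'$, forms the corresponding subcomplexes $G_\lambda$, and observes that $\varinjlim_{\mathcal{H}_\mathbf{t}}G_\lambda\cong G$ by \cite[Lemma 4.4]{PS}. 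Finite presentation of $G$ then factors $1_G$ through some $G_\mu$, so $H^{-1}(\iota_\mu)$ is a split epimorphism and hence an isomorphism, and the five lemma gives $A_\mu\cong Q/X$. No countability assumption is needed because the index set is arbitrary. If you strengthen your step 1 to preservation of direct limits (which is available since $G$ is finitely presented in $\mathcal{H}_\mathbf{t}$) and replace your countable chain by the directed system of all finitely generated submodules, the remainder of your module-level argument (steps 5--9, which are correct) can be made to work, though at that point it is essentially the same proof as the paper's, phrased in terms of modules $N_\lambda$ rather than subcomplexes $G_\lambda$.
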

\begin{proof}
We identify $G$ with the
complex
$$\xymatrix{\cdots \ar[r] &  0 \ar[r] & Q/X \ar[r]^{\bar{d}} & P \ar[r] & 0
\ar[r] & \cdots}$$ By lemma \ref{lem.description de t}, we know that
$V:=H^{0}(G)$ is a finitely presented $R$-module, thus
$Im(d)=Im(\bar{d})$ is a finitely generated submodule of $P$, we can
select a finitely generated submodule $A'<\frac{Q}{X}$ such that
$\bar{d}(A')=\text{Im}(d)$. We fix a direct system
$(A_{\lambda})_{\lambda \in \Lambda}$ of  finitely generated
submodules of $\frac{Q}{X}$, such that $A' < A_\lambda$ and
$\varinjlim{A_{\lambda}}=\frac{Q}{X}$. For each $\lambda\in\Lambda$, we denote by 
$G_{\lambda}$ the following complex:
$$\xymatrix{G_{\lambda}: \hspace{1 cm} \cdots \ar[r] & 0 \ar[r] &  A_{\lambda} \ar[r]^{\bar{d}_{|A_\lambda}} & P \ar[r] & 0 \ar[r] & \cdots}$$

It is clear that $(G_{\lambda})_{\lambda \in \Lambda}$ is a direct
system in $\mathcal{C}(R)$ and in $\mathcal{H}_{\mathbf{t}}$, and
that we have $\varinjlim_{\mathcal{C}(R)}{G_{\lambda}}\cong G$. By
lemma \ref{lem.description de t} and \cite[Lemma 4.4]{PS}, we have
that
$\varinjlim_\mathcal{H_\mathbf{t}}G_\lambda\cong\varinjlim_{\mathcal{C}(R)}G_\lambda
=G$. But, since $G$ is a finitely presented object of
$\mathcal{H}_\mathbf{t}$, the identify map $1_G:G\longrightarrow G$
factors in  this category in the form
$G\stackrel{f}{\longrightarrow}G_\mu\stackrel{i_\mu}{\longrightarrow}G$,
for some $\mu\in\Lambda$. It follows that $H^{-1}(\iota_\mu )$ is an
epimorphism and, therefore, it is an isomorphism. We then get a
commutative diagram with exact rows:

$$\xymatrix{0 \ar[r] & H^{-1}(G_{\mu}) \ar[r] \ar[d]^{\wr}_{H^{-1}(i_\mu)} & A_{\mu} \ar[r] \ar@{^{(}->}[d]^{i_\mu^{-1}} & \bar{d}(A_{\mu}) \ar[r] \ar@{=}[d]  & 0 \\ 0 \ar[r] & H^{-1}(G) \ar[r] & \frac{Q}{X} \ar[r]^{\bar{d} \hspace{0.3 cm}} & Im(d) \ar[r] & 0}$$

Therefore, $A_{\mu}\cong \frac{Q}{X}$ is a finitely generated
$R$-module.
\end{proof}

\begin{rem} \label{rem.completion to main theorem}
If $G$ is a complex in standard form satistying the standard condition 2 (i.e. $P$ and $Q$ are finitely generated), the proof of theorem \ref{prop.module heart of torsion pair} shows that $G$ is a progenerator of $\mathcal{H}_\mathbf{t}$ if, and only if, $G$ itself satisfies all standard conditions 1-5.
\end{rem}

Our next result in this section gives a criterion for a torsion pair
to be HKM:

\begin{prop} \label{prop.HKM}
Let $\xymatrix{P^{\bullet}:= \cdots \ar[r] & 0 \ar[r] & Q \ar[r]^{d}
& P \ar[r] & 0 \ar[r] & \cdots }$ be a complex of finitely generated
projective modules concentrated in degrees $-1$ and $0$, put 
$V=H^0(P^\bullet )$,  and let $\mathbf{t}=(\mathcal{T},\mathcal{F})$
be a torsion pair in $R\text{-Mod}$. The following assertions are
equivalent:

\begin{enumerate}
\item $P^\bullet$ is an HKM complex such that $\mathbf{t}$ is its associated HKM torsion pair;
\item $V\in\mathcal{X}(P^\bullet )$ and the complex
$\xymatrix{G:= \cdots \ar[r] & 0 \ar[r] & t(\text{Ker}(d))
\hspace{0.02cm}\ar@{^(->}[r]^{\hspace{0.6 cm}j} & Q
\ar[r]^{d\hspace{0.05cm}} & P \ar[r] & 0 \ar[r] & \cdots }$,
concentrated in degrees $-2,-1,0$, is a progenerator of
$\mathcal{H}_\mathbf{t}$;
\item $P^{\bullet}$ satisfies  the standard conditions 1 and 5, $\text{Ker}(d)\subseteq\text{Rej}_\mathcal{T}(Q)$ and  $\mathcal{X}(P^\cdot )\subseteq\mathcal{T}$  .
\end{enumerate}
\end{prop}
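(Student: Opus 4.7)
The plan is to establish $(2)\Leftrightarrow (3)$ by applying Theorem \ref{prop.module heart of torsion pair} (via Remark \ref{rem.completion to main theorem}) to the complex $G$, and then to establish $(2)\Leftrightarrow (1)$ by direct triangle manipulations together with Lemmas \ref{lem.description  de t} and \ref{lem.Hom and Ext in H}. The crucial observation is that $G$ is in standard form with the submodule $X=t(\Ker(d))\in\mathcal{T}$, a choice that renders several of the five standard conditions automatic.

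For $(2)\Leftrightarrow (3)$: standard condition 2 is given; the $\mathcal{F}$-membership part of condition 3 holds since $H^{-1}(G)=(1:t)(\Ker(d))\in\mathcal{F}$; and condition 4 is automatic because $Q$ is projective and $\Hom_R(t(\Ker(d)),F)=0$ for all $F\in\mathcal{F}$, forcing $\Ext_R^1(Q/t(\Ker(d)),F)=0$. The remaining standard conditions for $G$ translate as follows: condition 1 is identical (since $H^0(G)=V$); the rejection part of condition 3 is equivalent to $\Ker(d)\subseteq\text{Rej}_{\mathcal{T}}(Q)$, because morphisms from $Q/t(\Ker(d))$ into $\mathcal{T}$ correspond bijectively to morphisms from $Q$; and condition 5 for $G$ is equivalent to condition 5 for $P^\bullet$, since any morphism $Q^{(I)}\to R/t(R)$ factors through $(Q/t(\Ker(d)))^{(I)}$ (as $R/t(R)\in\mathcal{F}$) with identical cokernel on the $H^{-1}$-terms. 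The requirement $V\in\mathcal{X}(P^\bullet)$ of $(2)$ follows from $(3)$: since $V\in\mathcal{T}=\text{Pres}(V)$, the rejection hypothesis gives $\Hom_R(\Im(d),V)\cong\Hom_R(Q,V)$, and combining with the sequence $0\to\Im(d)\to P\to V\to 0$ and $\Ext_R^1(V,V)=0$ from condition 1 produces $\Hom_R(P,V)\twoheadrightarrow\Hom_R(Q,V)$, equivalent via Lemma \ref{lem.Hom and Ext in H} to $V\in\mathcal{X}(P^\bullet)$.

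For $(2)\Rightarrow (1)$, Lemmas \ref{lem.description  de t} and \ref{lem.Hom and Ext in H} give $\mathcal{T}=\text{Pres}(V)$ and $\mathcal{F}=\Ker\Hom_R(V,?)=\mathcal{Y}(P^\bullet)$. For $\mathcal{T}\subseteq\mathcal{X}(P^\bullet)$, I would present each $T\in\text{Pres}(V)$ as $V^{(J)}\to V^{(I)}\to T\to 0$, apply $\Hom_{\mathcal{D}(R)}(P^\bullet,?[1])$, and use $V\in\mathcal{X}(P^\bullet)$ along with the vanishing $\Hom_{\mathcal{D}(R)}(P^\bullet,N[2])=0$ for all modules $N$ (valid since $P^\bullet$ is concentrated in degrees $-1,0$) to deduce $T\in\mathcal{X}(P^\bullet)$. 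For the reverse inclusion $\mathcal{X}(P^\bullet)\subseteq\mathcal{T}$, given $M\in\mathcal{X}(P^\bullet)$, the triangle $t(M)[0]\to M[0]\to (M/t(M))[0]\stackrel{+}{\to}$ together with $\Hom_{\mathcal{D}(R)}(P^\bullet,t(M)[2])=0$ shows $M/t(M)\in\mathcal{X}(P^\bullet)\cap\mathcal{F}$; then the triangle $t(\Ker(d))[2]\to G\to P^\bullet\stackrel{+}{\to}$ combined with $t(\Ker(d))\in\mathcal{T}$ yields $\Hom_{\mathcal{D}(R)}(G,(M/t(M))[1])=0$, forcing $M/t(M)=0$ since $G$ generates $\mathcal{H}_{\mathbf{t}}$ and $(M/t(M))[1]\in\mathcal{H}_{\mathbf{t}}$. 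Conversely, for $(1)\Rightarrow (2)$, the assumption $\mathbf{t}=(\mathcal{X}(P^\bullet),\mathcal{Y}(P^\bullet))$ immediately gives $V\in\mathcal{X}(P^\bullet)={}^\perp\mathcal{Y}(P^\bullet)$, and one verifies the hypotheses of $(3)$ directly: $\mathcal{X}\subseteq\mathcal{T}$ is equality, $\Ker(d)\subseteq\text{Rej}_{\mathcal{T}}(Q)$ is immediate from the definition of $\mathcal{X}(P^\bullet)$ applied to $T\in\mathcal{T}=\mathcal{X}$, condition 1 follows from a trace argument yielding $\mathcal{T}=\text{Pres}(V)$ (using $\mathcal{T}\cap\mathcal{F}=0$ and the compactness computation of $\Hom_{\mathcal{D}(R)}(P^\bullet,K[1])$ on kernels $K\subseteq V^{(I)}$) together with the rejection argument giving $\Ext_R^1(V,?)|_{\mathcal{T}}=0$, and condition 5 is then a consequence of Lemma \ref{lem.generate (1:t)R}(2) once $R/t(R)[1]\in\text{Gen}_{\mathcal{H}_\mathbf{t}}(G)$ has been established. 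The main obstacle will be the inclusion $\mathcal{X}(P^\bullet)\subseteq\mathcal{T}$ in $(2)\Rightarrow (1)$, as it requires simultaneous tracking of long exact sequences from two distinct triangles and hinges crucially both on the structural fact that $P^\bullet$ has no term in degree $-2$ and on the repeated use of the torsion-theoretic vanishing $\Hom_R(\mathcal{T},\mathcal{F})=0$.
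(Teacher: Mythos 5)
Your cycle $(1)\Rightarrow(3)\Rightarrow(2)\Rightarrow(1)$ organizes the proof differently from the paper, which first establishes $1)\Leftrightarrow 2)$ by quoting \cite[Theorems 2.10 and 2.15]{HKM}, then proves $1),2)\Rightarrow 3)$ and $3)\Rightarrow 1)$. Your $(2)\Rightarrow(1)$ step is a genuine, self-contained improvement: rather than invoking \cite[Theorem 2.10]{HKM}, you show $\mathcal{T}=\mathcal{X}(P^{\bullet})$ and $\mathcal{F}=\mathcal{Y}(P^{\bullet})$ directly from the two triangles $t(M)[0]\to M[0]\to (M/t(M))[0]\to^{+}$ and $t(\Ker d)[2]\to G\to P^{\bullet}\to^{+}$, plus Lemmas \ref{lem.Hom and Ext in H}, \ref{lem.description  de t} and compactness of $P^{\bullet}$; that argument is correct. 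Your translation of the standard conditions between $G$ and $P^{\bullet}$ in $(3)\Rightarrow(2)$ is also essentially right, as is the derivation of $V\in\mathcal{X}(P^{\bullet})$ from $(3)$.

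Two problems, one small and one structural. The small one: the stated justification for the rejection equivalence, that ``morphisms from $Q/t(\Ker(d))$ into $\mathcal{T}$ correspond bijectively to morphisms from $Q$'', is false in general, since a map $Q\to T$ with $T\in\mathcal{T}$ need not vanish on the torsion submodule $t(\Ker(d))\in\mathcal{T}$; the correspondence is only an injection. Your cycle survives because $(3)\Rightarrow(2)$ only uses the direction $\Ker(d)\subseteq\text{Rej}_{\mathcal{T}}(Q)\Rightarrow$ rejection condition for $G$, which is fine, and in $(1)\Rightarrow(3)$ you obtain $\Ker(d)\subseteq\text{Rej}_{\mathcal{T}}(Q)$ from $\mathcal{T}=\mathcal{X}(P^{\bullet})$ rather than from the standard conditions of $G$. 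You should not present the two rejection conditions as equivalent on that ground.

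The structural problem is a circularity in $(1)\Rightarrow(3)$. You propose to get standard condition 5 for $P^{\bullet}$ via Lemma \ref{lem.generate (1:t)R}(2) ``once $R/t(R)[1]\in\text{Gen}_{\mathcal{H}_{\mathbf{t}}}(G)$ has been established'', but from $(1)$ alone that membership amounts to knowing that $G$ generates $\mathcal{H}_{\mathbf{t}}$ — the very content of assertion $(2)$ which your chain $(1)\Rightarrow(3)\Rightarrow(2)$ is meant to produce. The paper avoids this by proving $1)\Rightarrow 2)$ outright through \cite[Theorem 2.15]{HKM} (so that $G$ is a progenerator, hence satisfies all standard conditions), and only then passing to $(3)$ with both $(1)$ and $(2)$ in hand. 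Your ``trace argument'' for standard condition 1 can in fact be made rigorous from $(1)$ alone (show $\mathcal{T}\subseteq\Ker\Ext^{1}_{R}(V,?)$ from $\mathcal{T}=\mathcal{X}(P^{\bullet})$ and $\Ker(d)\subseteq\text{Rej}_{\mathcal{T}}(Q)$, then $\mathcal{T}=\text{Gen}(V)$ by the usual trace/cokernel argument, then $\text{Gen}(V)=\text{Pres}(V)$ via $K=\Ker(V^{(I)}\twoheadrightarrow T)\in\mathcal{X}(P^{\bullet})$ using compactness), but condition 5 has no such elementary route. So either reproduce \cite[Theorem 2.15]{HKM} for $(1)\Rightarrow(2)$ as the paper does and then use $(1),(2)\Rightarrow(3)$, or else supply a direct argument that $R/t(R)[1]\in\text{Gen}_{\mathcal{H}_{\mathbf{t}}}(G)$ from the HKM identities — at present that step is missing. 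Incidentally, your closing remark identifies $\mathcal{X}(P^{\bullet})\subseteq\mathcal{T}$ as the main obstacle, but that step actually goes through cleanly; the real obstacle is the one above.
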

\begin{proof}
Note that the standard conditions 2 and 4 are automatically satisfied by $P^{\bullet}$. 
Note also that we have an exact sequence $\xymatrix{0 \ar[r] &
t(\text{Ker}(d))[1]\ar[r] & P^\bullet \ar[r] & G'\ar[r] & 0}$ in
$\mathcal{C}(R)$, where $G'$ is quasi-isomorphic to $G$. We then get
a triangle in $\mathcal{D}(R)$:

\begin{center}
$\xymatrix{t(\text{Ker}(d))[1]\ar[r] & P^\bullet \ar[r] &
G\ar[r]^{+} & }$
\end{center}
In particular, we get a natural  isomorphism
$\xymatrix{\text{Hom}_{\mathcal{D}(R)}(G,?[0])\ar[r]^{\sim
\hspace{0.2cm}} &\text{Hom}_{\mathcal{D}(R)}(P^\bullet,?[0])}$ and
an exact sequence of functors $\xymatrix{R\text{-Mod}\ar[r] & Ab}$:

\begin{small}
$$\xymatrix{0 \ar[r] & \text{Hom}_{\mathcal{D}(R)}(G,?[1]) \ar[r] & \text{Hom}_{\mathcal{D}(R)}(P^\bullet,?[1]) \ar[r] & \text{Hom}_{R}(t(\text{Ker}(d)),?)
\ar[r] &\text{Hom}_{\mathcal{D}(R)}(G,?[2]) \ar[r] & 0}$$
\end{small}

 $1)\Longrightarrow 2)$
 We have an isomorphism $\xymatrix{\text{Hom}_{\mathcal{H}_\mathbf{t}}(G,?)=\text{Hom}_{\mathcal{D}(R)}(G,?)_{|\mathcal{H}_\mathbf{t}} \ar[r]^{\hspace{1cm}\sim} & \text{Hom}_{\mathcal{D}(R)}(P^\bullet,?)_{|\mathcal{H}_\mathbf{t}}}$ of functors
$\xymatrix{\mathcal{H}_\mathbf{t}\ar[r] & \text{Ab}}$ since
$\text{Hom}_{\mathcal{D}(R)}(t(\text{Ker}(d))[k],?)$ vanishes on
$\mathcal{H}_\mathbf{t}$, for $k=1,2$. It follows that the functor
$\xymatrix{\text{Hom}_{\mathcal{H}_\mathbf{t}}(G,?):\mathcal{H}_\mathbf{t}\ar[r]
&\text{Ab}}$ is faithful since the induced functor
\begin{small}$\xymatrix{\text{Hom}_{\mathcal{D}(R)}(P^\bullet
,?)_{|\mathcal{H}_\mathbf{t}}:\mathcal{H}_\mathbf{t} \ar[r] &
\text{End}_{\mathcal{D}(R)}(P^\bullet)\text{-Mod}}$\end{small} is an
equivalence of categories (see \cite[Theorem 2.15]{HKM}). Then $G$
is a generator of $\mathcal{H}_\mathbf{t}$. But the functor
$\xymatrix{\text{Hom}_{\mathcal{H}_\mathbf{t}}(G,?)
\cong\text{Hom}_{\mathcal{D}(R)}(P^\bullet,?)_{|\mathcal{H}_\mathbf{t}}:\mathcal{H}_\mathbf{t}
\ar[r] & \text{Ab}}$ preserves coproducts since $P^{\bullet}$ is a
compact object of $\mathcal{D}(R)$. It follows that $G$ is a compact
object of $\mathcal{H}_\mathbf{t}$.

From the initial comments of this proof and the fact that
$\mathcal{T}=\mathcal{X}(P^\bullet
)=\mathcal{}\text{Ker}(\text{Hom}_{\mathcal{D}(R)}(P^\bullet
,?[1])_{| R\text{-Mod}})$, we get a monomorphism
$\xymatrix{\text{Ext}_{\mathcal{H}_\mathbf{t}}^1(G,T[0])=\text{Hom}_{\mathcal{D}(R)}(G,T[1])\hspace{0.02
cm}\ar@{^(->}[r] &\text{Hom}_{\mathcal{D}(R)}(P^\bullet ,T[1])=0}$,
for each $T\in\mathcal{T}$. On the other hand,
$\text{Ext}_{\mathcal{H}_\mathbf{t}}^1(G,F[1])=\text{Hom}_{\mathcal{D}(R)}(G,F[2])$
is a homomorphic image of $\text{Hom}_R(t(\text{Ker}(d)),F)=0$, for
all $F\in\mathcal{F}$. We  conclude that $G$ is a projective object,
and hence a progenerator, of $\mathcal{H}_\mathbf{t}$ since
$\text{Ext}_{\mathcal{H}_\mathbf{t}}^1(G,?)$ vanishes on
$\mathcal{T}[0]$ and on $\mathcal{F}[1]$.

$2)\Longrightarrow 1)$ The mentioned initial comments show that
$\mathcal{Y}(P^\cdot )$ consists of the modules $F$ such that
$\text{Hom}_R(H^0(P^\bullet ),F)\cong\text{Hom}_{\mathcal{D}(R)}(G,
F[0])=0$. But theorem \ref{prop.module heart of torsion pair}
and its proof tell us that $H^0(G)=V$ generates $\mathcal{T}$, so
that we have $\mathcal{Y}(P^\bullet )=\mathcal{F}$. On the other
hand, if $F\in\mathcal{X}(P^\bullet )\cap\mathcal{Y}(P^\bullet
)=\mathcal{X}(P^\bullet )\cap\mathcal{F}$ then, again, our initial
comments in this proof show that
$\text{Hom}_{\mathcal{D}(R)}(G,F[1])=0$. But this implies that $F=0$
since $G$ is a generator of $\mathcal{H}_\mathbf{t}$. Assertion 1
follows now from \cite[Theorem 2.10]{HKM} and the fact that
$\mathcal{Y}(P^\bullet)=\mathcal{F}$.

$1),2)\Longrightarrow 3)$ From theorem \ref{prop.module heart
of torsion pair} and remark \ref{rem.completion to main theorem} we know that the complex $G$ satisfies all the standard conditions. 
It immediately follows that $P^\bullet$ satisfies the standard condition 1. 
As for standard condition 5,  note that we have isomorphisms of functors:

\begin{center}
$\xymatrix{\text{Hom}_R(Q^{(J)},?)_{|\mathcal{F}}\ar[r]^{\sim
\hspace{0.65
cm}}&\text{Hom}_R((Q/X)^{(J)},?)_{|\mathcal{F}}}$

$\xymatrix{\text{Hom}_R(\text{Ker}(d)^{(J)},?)_{|\mathcal{F}}\ar[r]^{\sim
\hspace{0.25 cm}} &\text{Hom}_R(H^{-1}(G)^{(J)},?)_{|\mathcal{F}}}$,
\end{center}
where $X=t(\text{Ker}(d))$.
Then the standard condition 5 holds for $P^{\bullet}$ because it holds for $G$. Finally, any
homomorphism $\xymatrix{f:Q\ar[r] &T}$, with $T\in\mathcal{T}$,
gives a morphism $\xymatrix{P^\bullet \ar[r] & T[1]}$ in
$\mathcal{D}(R)$. But this is the zero morphism since
$\mathcal{T}=\mathcal{X}(P^\bullet )$. This implies that $f$ factors
through $d$, so that $f(\text{Ker}(d))=0$ and, hence, that $\text{Ker}(d)\subseteq\text{Rej}_\mathcal{T}(Q)$.

 $3)\Longrightarrow 1)$ By lemma \ref{lem.Hom and Ext in H}, we know
 that $\mathcal{Y}(P^\bullet)$ consists of the modules $Y$ such that
 $ \text{Hom}_R(V,Y)=0$. Standard condition 1 gives then that $\mathcal{Y}(P^\bullet )=\mathcal{F}$, which implies that  $\mathcal{X}(P^\bullet )\cap\mathcal{Y}(P^\bullet )=0$  since $\mathcal{X}(P^{\bullet})\subseteq\mathcal{T}$.   On the other hand, the standard condition 3 says that each homomorphism $\xymatrix{f:Q\ar[r] & V}$
vanishes on $\text{Ker}(d)$. It then induces an $R$-homomorphism
$\xymatrix{\bar{f}:\text{Im}(d)\ar[r]& V}$, which
necessarily extends to $P$ since $\text{Ext}_R^1(V,V)=0$. This
proves that $\text{Hom}_{\mathcal{D}(R)}(P^\bullet ,V[1])=0$, thus
showing that $H^0(P^\bullet )=V\in\mathcal{X}(P^\bullet
 )$. Then, by \cite[Theorem 2.10]{HKM}, the pair $(\mathcal{X}(P^\bullet ),\mathcal{Y}(P^\bullet ))$
 is a torsion pair, which is necessarily equal to $\mathbf{t}$.
\end{proof}

\begin{cor} \label{cor.classical tilting progenerator}
Let $\mathbf{t}=(\mathcal{T},\mathcal{F})$ be a torsion pair in
$R\text{-Mod}$ and let $\xymatrix{P^\bullet:= \cdots \ar[r] & 0
\ar[r] & Q\ar[r]^{d} & P\ar[r] & 0 \ar[r] & \cdots}$ be a complex of
finitely generated projective $R$-modules concentrated in degrees
$-1$ and $0$. The following assertions are equivalent:

\begin{enumerate}

\item $P^\bullet$ is a classical tilting complex and $\mathbf{t}$ is the 
associated HKM torsion pair.
\item $P^\bullet$ is a progenerator of $\mathcal{H}_\mathbf{t}$;
\item The complex $P^{\bullet}$ satisfies the standard conditions (1, 3 and 5).
\end{enumerate}
%\end{enumerate}
%In this case $P^\bullet$ is a classical tilting complex and an HKM
%complex whose associated torsion pair is $\mathbf{t}$.
\end{cor}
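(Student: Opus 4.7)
The plan is to break the corollary into the two equivalences $(2)\Leftrightarrow (3)$ and $(1)\Leftrightarrow (2)$, using Theorem \ref{prop.module heart of torsion pair}, Remark \ref{rem.completion to main theorem}, Proposition \ref{prop.HKM} and Lemma \ref{lem.Hom and Ext in H}. For $(2)\Leftrightarrow (3)$, I would regard $P^\bullet$ as a complex in standard form with $X=0$ in degree $-2$. In this presentation, the standard condition 2 is exactly the hypothesis, and the standard condition 4 becomes $\text{Ext}_R^1(Q,?)_{|\mathcal{F}}=0$, which is automatic because $Q$ is projective. Hence Theorem \ref{prop.module heart of torsion pair} combined with Remark \ref{rem.completion to main theorem} yields that $P^\bullet$ is a progenerator of $\mathcal{H}_\mathbf{t}$ if, and only if, $P^\bullet$ satisfies the standard conditions 1, 3 and 5.

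For $(2)\Rightarrow (1)$, I would first observe that standard condition 3 forces $\text{Ker}(d)=H^{-1}(P^\bullet)\in\mathcal{F}$, hence $t(\text{Ker}(d))=0$; in the notation of Proposition \ref{prop.HKM}, the complex $G$ then coincides up to quasi-isomorphism with $P^\bullet$, so Proposition \ref{prop.HKM}, $(2)\Rightarrow (1)$, ensures that $P^\bullet$ is HKM with torsion pair $\mathbf{t}$. To upgrade this to classical tilting, compactness is automatic, and the vanishing $\text{Hom}_{\mathcal{D}(R)}(P^\bullet,P^\bullet[i])=0$ for $i\neq 0$ splits into the cases $|i|\geq 2$ (by degree reasons, since the nonzero degrees of $P^\bullet[i]$ are disjoint from those of $P^\bullet$ and $P^\bullet$ is a bounded complex of projectives, so $\text{Hom}$ in $\mathcal{K}(R)$ and $\mathcal{D}(R)$ coincide), $i=-1$ (from the general vanishing $\text{Hom}_{\mathcal{D}(R)}(\mathcal{H}_\mathbf{t},\mathcal{H}_\mathbf{t}[n])=0$ for $n<0$ in any t-structure heart), and $i=1$ (from the projectivity of $P^\bullet$ in $\mathcal{H}_\mathbf{t}$ together with the BBD identification $\text{Ext}_{\mathcal{H}_\mathbf{t}}^1(P^\bullet,P^\bullet)=\text{Hom}_{\mathcal{D}(R)}(P^\bullet,P^\bullet[1])$). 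For the generating condition, I would extend these arguments via Lemma \ref{lem.Hom and Ext in H} and the triangle $Q\to P\to P^\bullet\to Q[1]$ to show $\text{Hom}_{\mathcal{D}(R)}(P^\bullet,M[i])=0$ for all $M\in\mathcal{H}_\mathbf{t}$ and $i\neq 0$; then, given $X\in\mathcal{D}(R)$ with $\text{Hom}_{\mathcal{D}(R)}(P^\bullet[j],X)=0$ for all $j$, the non-degeneracy of the HRS t-structure together with the fact that $P^\bullet$ generates $\mathcal{H}_\mathbf{t}$ force all the $\mathbf{t}$-cohomology objects $H_\mathbf{t}^i(X)$ to vanish, whence $X=0$.

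For $(1)\Rightarrow (2)$, assuming $P^\bullet$ is classical tilting with associated HKM pair $\mathbf{t}$, the plan is to force $\text{Ker}(d)\in\mathcal{F}$, so that $G\simeq P^\bullet$ in $\mathcal{D}(R)$ and Proposition \ref{prop.HKM}, $(1)\Rightarrow (2)$, completes the proof. Using the canonical truncation triangle $\text{Ker}(d)\to P^\bullet[-1]\to V[-1]\to\text{Ker}(d)[1]$ and applying $\text{Hom}_{\mathcal{D}(R)}(P^\bullet,?)$, one piece of the long exact sequence reads
$$\text{Hom}_{\mathcal{D}(R)}(P^\bullet,V[-2])\to \text{Hom}_{\mathcal{D}(R)}(P^\bullet,\text{Ker}(d))\to \text{Hom}_{\mathcal{D}(R)}(P^\bullet,P^\bullet[-1]).$$
The rightmost term vanishes by the classical tilting hypothesis, while the leftmost term vanishes for degree reasons (no nonzero chain maps from a complex concentrated in degrees $\{-1,0\}$ to one concentrated in degree $2$). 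Hence by Lemma \ref{lem.Hom and Ext in H}(1), $\text{Hom}_R(V,\text{Ker}(d))=0$, meaning $\text{Ker}(d)\in\mathcal{Y}(P^\bullet)=\mathcal{F}$, as desired. The most delicate step will be the generating condition in $(2)\Rightarrow (1)$, where the non-degeneracy of the HRS t-structure and a careful use of truncation triangles are needed to reduce the vanishing of $X$ to that of its $\mathbf{t}$-cohomology objects.
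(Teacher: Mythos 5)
Your $(2)\Leftrightarrow(3)$ is exactly the paper's: specialise Theorem \ref{prop.module heart of torsion pair} and Remark \ref{rem.completion to main theorem} to the standard-form complex with $X=0$ in degree $-2$, where condition 4 is automatic from projectivity of $Q$. Your $(1)\Rightarrow(2)$ is correct but differs from the paper: the paper simply cites \cite[Remark 3.9 and Theorem 3.8]{HKM}, whereas you derive it internally, using the truncation triangle to get $\text{Hom}_{\mathcal{D}(R)}(P^\bullet,\text{Ker}(d)[0])=0$, hence $\text{Ker}(d)\in\mathcal{Y}(P^\bullet)=\mathcal{F}$, so that the $G$ of Proposition \ref{prop.HKM} agrees with $P^\bullet$ and $(1)\Rightarrow(2)$ of that proposition applies; this is a nice self-contained alternative (in fact $\text{Ker}(d)\in\mathcal{Y}(P^\bullet)$ follows directly from the definition of $\mathcal{Y}(P^\bullet)$, without invoking Lemma \ref{lem.Hom and Ext in H}(1)).

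The genuine gap is in $(2)\Rightarrow(1)$, and specifically in the generating condition for classical tilting. The paper's route is short: from the triangle $t(M)[0]\to M[0]\to M/t(M)[0]\to$, together with the facts that $\text{Hom}_{\mathcal{D}(R)}(P^\bullet,?[0])$ and $\text{Hom}_{\mathcal{D}(R)}(P^\bullet,?[1])$ vanish on $\mathcal{F}$ and $\mathcal{T}$ respectively (progenerator + Lemma \ref{lem.Hom and Ext in H}), one gets $\mathcal{X}(P^\bullet)=\mathcal{T}$ and $\mathcal{Y}(P^\bullet)=\mathcal{F}$, and then \cite[Remark 3.9]{HKM} is invoked to conclude that $P^\bullet$ is classical tilting. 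You instead try to verify the three classical-tilting axioms directly; compactness and the self-$\text{Hom}$ vanishing are fine, but the claim that $\text{Hom}_{\mathcal{D}(R)}(P^\bullet[j],X)=0$ for all $j$ forces $H^n_{\mathbf{t}}(X)=0$ for all $n$ is not established by the truncation argument you sketch: from the triangle $\tau^{\leq n-1}_{\mathbf{t}}X\to\tau^{\leq n}_{\mathbf{t}}X\to H^n_{\mathbf{t}}(X)[-n]\to$ one cannot conclude $\text{Hom}_{\mathcal{H}_\mathbf{t}}(P^\bullet,H^n_{\mathbf{t}}(X))=0$ from $\text{Hom}(P^\bullet[-n],X)=0$ alone, because the relevant connecting term $\text{Hom}(P^\bullet[-n],\tau^{\leq n-1}_{\mathbf{t}}X[1])$ is not killed by the t-structure orthogonality (it involves maps from $\mathcal{U}^\perp_{\mathbf{t}}[1-n]$ into $\mathcal{U}_{\mathbf{t}}[2-n]$). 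A dévissage does work for $X$ bounded in the $\mathbf{t}$-sense (using also that $\text{Hom}_{\mathcal{D}(R)}(P^\bullet,N[i])=0$ for $N$ a module and $i\notin\{0,1\}$), but the unbounded case needs a further convergence argument that you do not give; this is precisely the step the paper outsources to \cite{HKM}. There is also a small omission: applying Proposition \ref{prop.HKM} $(2)\Rightarrow(1)$ requires checking $V\in\mathcal{X}(P^\bullet)$, which is easy ($V\in\mathcal{T}$ by standard condition 1 and $\text{Hom}_{\mathcal{D}(R)}(P^\bullet,V[1])=\text{Ext}^1_{\mathcal{H}_\mathbf{t}}(P^\bullet,V[0])=0$ by projectivity), but should be stated.
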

\begin{proof}

$1)\Longrightarrow 2)$ It follows directly from \cite[Remark 3.9 
and Theorem 3.8]{HKM}.

$2)\Longrightarrow 1)$ Let $M$ be an $R$-module and let us apply the 
cohomological functor $\text{Hom}_{D(R)}(P^\bullet ,?)$ to the canonical 
triangle 
$$\xymatrix{t(M)[0] \ar[r] &  M[0] \ar[r] & 
M/t(M)[0] \ar[r]^{\hspace{0.8 cm}+} & }$$
Using the fact that $P^\bullet$ 
is a progenerator of $\mathcal{H}_\mathbf{t}$, that 
$\text{Hom}_{D(R)}(P^\bullet ,?[0])$ vanish on $\mathcal{F}$ and that 
$\text{Hom}_{D(R)}(P^\bullet ,?[1])$ vanish on $\mathcal{T}$, we get that

\begin{center}
$M\in\mathcal{X}(P^\bullet )$ $\Longleftrightarrow$ 
$\text{Hom}_{D(R)}(P^\bullet ,M/t(M)[1])=0$ $\Longleftrightarrow$ 
$M/t(M)=0$ $\Longleftrightarrow$ $M\in\mathcal{T}$,
\end{center}

and also that

\begin{center}
$M\in\mathcal{Y}(P^\bullet )$ $\Longleftrightarrow$ 
$\text{Hom}_{D(R)}(P^\bullet ,t(M)[0])=0$ $\Longleftrightarrow$ $t(M)=0$ 
$\Longleftrightarrow$ $M\in\mathcal{F}$.
\end{center}
We then have that $\mathbf{t}=(\mathcal{X}(P^{\bullet }),\mathcal{Y}(P^{\bullet}))$  and, 
by \cite[Remark 3.9]{HKM}, the complex $P^\bullet$ is classical tilting.

%$1)\Longrightarrow 2)$ Exactamente como estaba.

$2)\Longleftrightarrow 3)$ is a direct consequence of theorem
\ref{prop.module heart of torsion pair} (see remark \ref{rem.completion to main theorem}).

\end{proof}

\begin{defi}
We shall say that $\mathcal{H}_\mathbf{t}$ \emph{has a progenerator
which is a classical tilting complex} when it has a progenerator
$P^\bullet$ as in corollary \ref{cor.classical tilting
progenerator}.
\end{defi}

\section{The case of a hereditary torsion pair}

Suppose now that $\mathbf{t}=(\mathcal{T},\mathcal{F})$ is
hereditary. We will show that the condition that its heart be a
module category gives more precise information than in the general
case. Recall that $\mathbf{t}$ is called \emph{bounded} when its
associated Gabriel topology has a basis consisting of two-sided
ideals (see \cite[Chapter VI]{S}). Equivalently, when
$R/\text{ann}_R(T)\in\mathcal{T}$, for each finitely generated module $T\in\mathcal{T}$. 

\begin{teor} \label{teor.hereditary case}
Let $\mathbf{t}=(\mathcal{T},\mathcal{F})$ be a hereditary torsion
pair in $R-\text{Mod}$ and let $G$ be a complex in the standard form,
%\begin{center}
%$\xymatrix{G:= & \cdots \ar[r] &0  \ar[r] &  X \ar[r]^{j} &
%Q\ar[r]^{d} & P \ar[r] & 0 \ar[r] & \cdots}$
%\end{center}
%be a complex of $R$-modules which is in $\mathcal{H}_\mathbf{t}$,
where $P$ and $Q$ are finitely generated projective modules and $V=H^0(G)$. The following
assertions are equivalent:

\begin{enumerate}
\item $G$ is a progenerator of $\mathcal{H}_\mathbf{t}$;
\item The following conditions are satisfied:

\begin{enumerate}
\item $G$ satisfies the standard conditions 1, 3 and 4;
\item If $\mathbf{b}=\text{ann}_R(V/t(R)V)$, then $\mathbf{b}/t(R)$
is an idempotent ideal of $R/t(R)$ (which is finitely generated on
the left) and $R/\mathbf{b}$ is in $\mathcal{T}$;
\item There is a morphism
$\xymatrix{h:(\frac{Q}{X})^{(J)}\ar[r] &\frac{\mathbf{b}}{t(R)}}$,
for some set $J$,  such that
$\xymatrix{h_{|H^{-1}(G)^{(J)}}:H^{-1}(G)^{(J)}\ar[r]
&\frac{\mathbf{b}}{t(R)}}$ is an epimorphism.
\end{enumerate}
When $\mathbf{t}$ is  bounded, the assertions are also equivalent
to:

\item There is an idempotent ideal $\mathbf{a}$ of $R$, which is
finitely generated on the left, such that:

\begin{enumerate}
\item $\text{add}(V)=\text{add}(R/\mathbf{a})$ and
$\mathbf{t}$ is the right constituent torsion pair of the TTF triple
defined by $\mathbf{a}$;
\item $\text{Ker}(d)\subseteq X+\mathbf{a}Q$;
\item $\text{Ext}_R^1(\frac{Q}{X},?)$ vanishes on $\mathcal{F}$;
\item There is a morphism
$\xymatrix{h:(\frac{Q}{X})^{(J)}\ar[r]
&\frac{\mathbf{a}}{t(\mathbf{a})}}$, for some set $J$,  such that
$\xymatrix{h_{|H^{-1}(G)^{(J)}}:H^{-1}(G)^{(J)}\ar[r] &
\frac{\mathbf{a}}{t(\mathbf{a})}}$ is an epimorphism.
\end{enumerate}
\end{enumerate}
\end{teor}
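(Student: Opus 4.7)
My first step would be to invoke Theorem \ref{prop.module heart of torsion pair} together with Remark \ref{rem.completion to main theorem}: assertion $(1)$ is equivalent to $G$ satisfying all five standard conditions. Since standard condition 2 is built into the hypothesis and conditions 1, 3, 4 are precisely $(2a)$, the equivalence $(1)\Leftrightarrow (2)$ reduces to showing, under $(2a)$, that standard condition 5 is equivalent to the conjunction of $(2b)$ and $(2c)$. A useful initial simplification comes from the hereditary hypothesis combined with standard condition 1: one has $\overline{\text{Gen}}(V)=\text{Gen}(V)=\mathcal{T}$ (the nontrivial inclusion uses $V\in\mathcal{T}$ and closure of $\mathcal{T}$ under subobjects), so that standard condition 5 becomes the existence of $h:(Q/X)^{(I)}\to R/t(R)$ with $\text{coker}(h|_{H^{-1}(G)^{(I)}})\in\mathcal{T}$. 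Moreover, since $V$ is finitely generated (Lemma \ref{lem.description  de t}), $R/\mathbf{b}$ embeds into a finite power of $V/t(R)V\in\mathcal{T}$, and heredity forces $R/\mathbf{b}\in\mathcal{T}$ automatically.

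The implication $(2b)\wedge (2c)\Rightarrow$ standard condition 5 is then direct: composing the map of $(2c)$ with the inclusion $\mathbf{b}/t(R)\hookrightarrow R/t(R)$ yields a map whose restriction cokernel equals $R/\mathbf{b}\in\mathcal{T}$. For the converse, I would identify $\mathbf{b}/t(R)$ with the idempotent two-sided ideal of $R/t(R)$ attached to the (to-be-established) TTF class $\mathcal{T}':=\mathcal{T}\cap (R/t(R))\text{-Mod}$. Since $G$ generates $\mathcal{H}_\mathbf{t}$ and $\mathbf{b}/t(R)\in\mathcal{F}$, there is an epimorphism $G^{(J)}\twoheadrightarrow (\mathbf{b}/t(R))[1]$ in $\mathcal{H}_\mathbf{t}$ (with $J$ finite by compactness of $G$); via the chain-map description of morphisms in standard form, this corresponds to $h:(Q/X)^{(J)}\to\mathbf{b}/t(R)$, and the long exact cohomology sequence of the associated triangle gives $\text{coker}(h|_{H^{-1}(G)^{(J)}})$ as a submodule of $V^{(J)}$, hence in $\mathcal{T}$. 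Granted the idempotency of $\mathbf{b}/t(R)$, this cokernel $C$ equals $(\mathbf{b}/t(R))C$ (by idempotency plus surjectivity of $\mathbf{b}/t(R)\twoheadrightarrow C$) while $(\mathbf{b}/t(R))C=0$ (since $C\in\mathcal{T}'$), forcing $C=0$; this yields $(2c)$, and the finite generation of $\mathbf{b}/t(R)$ on the left comes from the finiteness of $J$.

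The main obstacle is therefore the idempotency of $\mathbf{b}/t(R)$. First I would observe that in the hereditary case every $F\in\mathcal{F}$ satisfies $t(R)F=0$ (any cyclic $Rrf$ with $r\in t(R)$ is simultaneously in $\mathcal{T}$, as a quotient of $Rr\subseteq t(R)$, and in $\mathcal{F}$, as a subobject of $F$, hence zero), so $\mathcal{F}\subseteq (R/t(R))\text{-Mod}$ and $\mathbf{t}'=(\mathcal{T}',\mathcal{F})$ is a well-defined hereditary torsion pair in $(R/t(R))\text{-Mod}$. Next, I would show that $\mathcal{T}'$ is also closed under products in $(R/t(R))\text{-Mod}$---this is where the hypothesis that $\mathcal{H}_\mathbf{t}$ is a module category enters decisively, by using compactness of the progenerator $G$ to lift product structures into $\mathcal{H}_\mathbf{t}$. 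Closure under products makes $\mathcal{T}'$ a TTF class in $(R/t(R))\text{-Mod}$ with associated idempotent ideal precisely $\mathbf{b}/t(R)=\text{ann}_{R/t(R)}(V/t(R)V)$, since $V/t(R)V$ generates $\mathcal{T}'$ (every $M\in\mathcal{T}'$ being an $R/t(R)$-quotient of $V^{(I)}$ factors through $(V/t(R)V)^{(I)}$).

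Finally, for the bounded case, $(2)\Leftrightarrow (3)$ is obtained by taking $\mathbf{a}$ to be the unique idempotent two-sided ideal of $R$ defining the TTF triple in $R\text{-Mod}$ whose right-constituent is $\mathbf{t}$; boundedness guarantees its existence, since $R/\text{ann}_R(V)\in\mathcal{T}$ for each finitely generated $V\in\mathcal{T}$, giving $\mathbf{a}=\text{ann}_R(V)$. Condition $(3a)$ then reflects that $V$ is a progenerator of the module category $\mathcal{T}=\{M:\mathbf{a}M=0\}$ (using that $V$ is classical quasi-tilting); $(3b)$ translates standard condition 3 via the identity $\text{Rej}_\mathcal{T}(Q/X)=(X+\mathbf{a}Q)/X$; $(3c)$ is standard condition 4 verbatim; and $(3d)$ corresponds to $(2c)$ via the canonical isomorphism $\mathbf{b}/t(R)=(\mathbf{a}+t(R))/t(R)\cong\mathbf{a}/t(\mathbf{a})$, which uses the relation $\mathbf{a}\cap t(R)=0$ inherent to TTF triples.
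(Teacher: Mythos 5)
Your reduction of $(1)\Leftrightarrow(2)$ to showing that, under condition $(2a)$, standard condition~5 is equivalent to $(2b)\wedge(2c)$, is exactly right, and so is the easy implication $(2b)\wedge(2c)\Rightarrow$~standard condition~5 (compose $h$ with $\mathbf{b}/t(R)\hookrightarrow R/t(R)$ and observe the cokernel is $R/\mathbf{b}\in\mathcal{T}$). Your observation that heredity plus finite generation of $V$ force $R/\mathbf{b}\in\mathcal{T}$ automatically is a pleasant shortcut. The trouble is in the converse.

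You propose to establish that $\mathcal{T}'=\mathcal{T}\cap (R/t(R))\text{-Mod}$ is a TTF class in $(R/t(R))\text{-Mod}$ by showing closure under products, and you say this follows ``by using compactness of the progenerator $G$ to lift product structures into $\mathcal{H}_{\mathbf{t}}$.'' This step is not justified and is the crux of the whole theorem. Compactness of $G$ controls coproducts, not products, and the cohomological functor $H^{0}$ does not in general commute with products, so it is not clear how a product of $T_i\in\mathcal{T}'$ ``lifts'' to a product of the $T_i[0]$ in $\mathcal{H}_{\mathbf{t}}$. The paper avoids this entirely: it does not prove closure under products abstractly, but instead uses standard condition~5 to produce a concrete map $h:(Q/X)^{(I)}\to \overline{R}$, sets $\bar{\mathbf{b}}:=\mathrm{Im}(h')$ where $h'=h_{|H^{-1}(G)^{(I)}}$, and then proves directly that $T\in\overline{R}\text{-Mod}$ lies in $\mathcal{T}$ if and only if $\mathrm{Hom}_R(\bar{\mathbf{b}},T)=0$. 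The ``only if'' half of that claim needs a pushout along $\bar{\mathbf{b}}\hookrightarrow\overline{R}$ combined with $H^{-1}(G)\subseteq\mathrm{Rej}_{\mathcal{T}}(Q/X)$ (standard condition~3), and the ``if'' half uses $R/\mathbf{b}\in\mathcal{T}$. This gives the TTF structure and the identification of the associated idempotent ideal simultaneously, without ever proving product-closure in the abstract.

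Your finite generation argument has a similar gap: you want $J$ finite ``by compactness of $G$,'' but the existence of \emph{some} epimorphism $G^{(J)}\twoheadrightarrow(\mathbf{b}/t(R))[1]$ only reduces to a finite $J$ if you already know $(\mathbf{b}/t(R))[1]$ is a finitely generated object of $\mathcal{H}_{\mathbf{t}}$, which you do not. The paper instead shows $\mathrm{add}_{\overline{R}}(\overline{V})=\mathrm{add}_{\overline{R}}(R/\mathbf{b})$, hence $R/\mathbf{b}$ is a finitely presented $\overline{R}$-module (since $\overline{V}=V/t(R)V$ is), which is equivalent to $\bar{\mathbf{b}}$ being finitely generated. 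Finally, a small slip in the bounded case: the isomorphism $\frac{\mathbf{a}+t(R)}{t(R)}\cong\frac{\mathbf{a}}{t(\mathbf{a})}$ does \emph{not} rest on $\mathbf{a}\cap t(R)=0$ (which is false in general); it is the second isomorphism theorem plus the fact that $t$ is left exact for hereditary $\mathbf{t}$, so $\mathbf{a}\cap t(R)=t(\mathbf{a})$.
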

\begin{proof}
$1)\Longrightarrow 2)$ By theorem \ref{prop.module heart of torsion pair} and remark \ref{rem.completion to main theorem}, we may assume  that $G$ satisfies the  standard conditions 1-5.  %The complex $G$ satisfies all properties  1-5
%of theorem \ref{prop.module heart of torsion pair}.  In
So we only need to check properties 2.b and 2.c.  We proceed in several steps. All throughout the proof we put $\overline{R}=\frac{R}{t(R)}$.

\emph{Step 1:
$(\mathcal{T}\cap\overline{R}-\text{Mod},\mathcal{F})$ is the
right constituent torsion pair of a TTF triple in
$\overline{R}-\text{Mod}$}. By the standard condition 5, 
%By property 5 of theorem
%\ref{prop.module heart of torsion pair}, 
there is a morphism
$\xymatrix{h:(Q/X)^{(I)}\ar[r] &\overline{R}}$ such that if
$h'=h_{|H^{-1}(G)^{(I)}}$, then
$\text{Coker}(h')\in\overline{Gen}(V)$, where $V=H^0(G)$. But in
this case $\overline{Gen}(V)=\text{Gen}(V)=\mathcal{T}$ since
$\mathbf{t}$ is hereditary.

Put $\bar{\mathbf{b}}=\frac{\mathbf{b}}{t(R)}=\text{Im}(h')$, so
that $R/\mathbf{b}\in\mathcal{T}$. We claim that a $\bar{R}$-module
$T$ is in $\mathcal{T}$ if, and only if,
$\text{Hom}_R(\bar{\mathbf{b}},T)=0$. This will imply that
$\mathcal{T}\cap\bar{R}-\text{Mod}$ is also a torsionfree class in
$\bar{R}-\text{Mod}$. For the 'only if' part of our claim, let
$\xymatrix{f:\bar{\mathbf{b}}\ar[r]&T}$ be any morphism, where
$T\in\mathcal{T}$. We then get a pushout commutative diagram

$$\xymatrix{0 \ar[r] & \bar{\mathbf{b}} \hspace{0.1 cm}\ar@{^(->}[r]^{j} \ar[d]^{f} & \bar{R} \ar[r] \ar[d]^{g^{'}} & R/b \ar[r] \ar@{=}[d] & 0 \\ 0 \ar[r] & T \ar[r]^{\lambda} & T^{'} \ar[r] & R/b \ar[r] & 0 }$$

Then $T'\in\mathcal{T}$ and so $g^{'} \circ h_{| H^{-1}(G)^{(I)}}=0$
since $H^{-1}(G)\subseteq\text{Rej}_\mathcal{T}(Q/X)$. But $g^{'}
\circ h_{| H^{-1}(G)^{(I)}}$ is equal to the composition
$\xymatrix{H^{-1}(G)^{(I)}\ar[r]^{\hspace{0.65 cm}h'}&
\bar{\mathbf{b}}\ar[r]^{f} &T \ar[r]^{\lambda}&T'}$, which is then
the zero map. This implies that $f=0$ since $h'$ is an epimorphism
and $\lambda$ is a monomorphism. For the 'if' part, suppose that
$\text{Hom}_R(\bar{\mathbf{b}},T)=0$ and fix an epimorphism
$\xymatrix{q:\bar{R}^{(J)}\ar@{>>}[r] & T}$. Then
$q(\bar{\mathbf{b}}^{(J)})=0$, which gives an induced epimorphism
\hfill
$\xymatrix{\bar{q}:\frac{\bar{R}^{(J)}}{\bar{\mathbf{b}}^{(J)}}\cong(\frac{R}{\mathbf{b}})^{(J)}\ar@{>>}[r]
& T}$. It follows that $T\in\mathcal{T}$, which settles our claim.

\emph{Step 2: The idempotent ideal of $\overline{R}$ which defines the TTF
triple in $\overline{R}-\text{Mod}$ is
$\bar{\mathbf{b}'}=\mathbf{b}'/t(R)$, where
$\mathbf{b}'=\text{ann}_R(V/t(R)V)$}. Let
$\bar{\mathbf{b}}'=\mathbf{b}'/t(R)$ be the idempotent ideal of
$\overline{R}$ which defines the TTF triple mentioned above. We then know
(see \cite[VI.8]{S})  that $\text{Gen}(\bar{\mathbf{b}'})=\{$
$C\in\bar{R}-\text{Mod}\text{: }\bar{\mathbf{b}}'C=C\}=\{C\in\bar{R}-\text{Mod}:$
$\text{Hom}_R(C,T)=0,\text{ for all
}T\in\mathcal{T}\cap\bar{R}-\text{Mod}\}$ and
$\mathcal{T}\cap\bar{R}-\text{Mod}=\{T\in\bar{R}-\text{Mod}:$
$\bar{\mathbf{b}}'T=0\}=\text{Gen}(R/\mathbf{b}')$. In particular,
for the ideal $\mathbf{b}$ of $R$ given in the first step,   we have
that $\bar{\mathbf{b}}'\bar{\mathbf{b}}=\bar{\mathbf{b}}$ and
$\bar{\mathbf{b}}'\frac{R}{\mathbf{b}}=0$.  It follows that
$\bar{\mathbf{b}}'=\bar{\mathbf{b}}$. We then get that
$\text{Gen}(V/t(R)V)=\mathcal{T}\cap\overline{R}-\text{Mod}=\text{Gen}(R/\mathbf{b})$,
from which we deduce that
$\mathbf{b}'=\mathbf{b}=\text{ann}_R(V/t(R)V)$.

\emph{Step 3: Verification of properties 2.b and 2.c} Except for the
finite generation of $\bar{\mathbf{b}}$, property 2.b follows
immediately from the previous steps. But $R/\mathbf{b}$ is finitely
generated and we have an epimorphism
$\xymatrix{\bar{V}^n\ar@{>>}[r]&R/\mathbf{b}}$. This epimorphism
splits since  both its domain and codomain are annihilated by
$\mathbf{b}$ and $R/\mathbf{b}$ is projective in
$R/\mathbf{b}-\text{Mod}$. But $\bar{V}=V/t(R)V$ is clearly a
finitely presented $\overline{R}$-module. It follows that $R/\mathbf{b}$
is finitely presented as a left $\overline{R}$-module, which is equivalent
to saying that $\bar{\mathbf{b}}$ is finitely generated as a left ideal
of $\bar{R}=R/t(R)$. Let us fix an epimorphism $\xymatrix{\pi
:\bar{R}^{(n)}\ar@{>>}[r] &\bar{\mathbf{b}}}$. Using the canonical
map $\xymatrix{h:(Q/X)^{(I)}\ar[r] &\bar{R}}$ (see step 1), we
obtain a morphism
$\xymatrix{g:[(Q/X)^{(I)}]^{(n)}\ar[r]^{\hspace{1.0cm}h^{(n)}}
&\bar{R}^{(n)}\ar@{>>}[r]^{\pi}&\bar{\mathbf{b}}}$. If
$Y:=H^{-1}(G)$ then we have

\begin{center}
 $g[(Y^{(I)})^{(n)}]=\pi (\text{Im}(h')^{(n)})=\pi (\bar{\mathbf{b}}^{(n)})=\pi (\bar{\mathbf{b}}\bar{R}^{(n)})=
 \bar{\mathbf{b}}\bar{\mathbf{b}}=\bar{\mathbf{b}}$,
\end{center}
which proves 2.c.

$2)\Longrightarrow 1)$ It remains to prove that $G$ satisfies the standard property 5.  If
$\xymatrix{h:(\frac{Q}{X})^{(J)}\ar[r] &\bar{\mathbf{b}}}$ is the
homomorphism given in 2.c, then $h$ is an epimorphism and the
composition \linebreak  $\xymatrix{g:(\frac{Q}{X})^{(J)} \ar[r]
&\bar{\mathbf{b}} \hspace{0.02cm}\ar@{^(->}[r] &\bar{R}=R/t(R)}$ has
$R/\mathbf{b}$ as its cokernel. By property 2.b, this cokernel is in
$\mathcal{T}=\text{Gen}(V)$.

We assume in the rest of the proof that $\mathbf{t}$ is bounded.

$1), 2)\Longrightarrow 3)$ We know that
$\mathcal{T}=\text{Gen}(V)\subseteq\text{Ker}(\text{Ext}_R^1(V,?))$
and that ${}_RV$ is finitely presented. The bounded condition of
$\mathbf{t}$ implies that $R/\text{ann}_R(V)\in\mathcal{T}$, so that
$\mathbf{a}:=\text{ann}_R(V)$ annihilates all modules in
$\mathcal{T}$.
 By \cite[Proposition VI.6.12]{S}, we
know that $\mathbf{a}$ is idempotent, so that $\mathbf{t}$ is the
right constituent torsion pair of the TTF triple defined by
$\mathbf{a}$. This allows to identify $\mathcal{T}$ with
$R/\mathbf{a}-\text{Mod}$ and, using that also
$\mathcal{T}=\text{Gen}(V)\subseteq\text{Ker}(\text{Ext}_R^1(V,?))$,
we conclude that $\text{add}(V)=\text{add}(R/\mathbf{a})$. We then
get condition 3.a. We also get that $R/\mathbf{a}$ is a finitely
presented $R$-module, and so $\mathbf{a}$ is finitely generated on
the left.

The fact that $G$ satisfies the standard conditions and that $\text{Rej}_\mathcal{T}(M)=\mathbf{a}M$, for each
$R$-module $M$, automatically imply that conditions 3.b and 3.c hold. 
 Finally,
following the proof of the implication $1)\Longrightarrow 2)$, we
see that the ideal $\mathbf{b}$ obtained in assertion 2 is
identified by the properties that $\bar{\mathbf{b}}=\mathbf{b}/t(R)$
is idempotent and a $\bar{R}$-module is in $\mathcal{T}$ if, and
only if, $\mathbf{b}T=0$. Then we have $\mathbf{b}=\mathbf{a}+t(R)$
and so condition 3.d follows by using the isomorphism
$\frac{\mathbf{b}}{t(R)}=\frac{\mathbf{a}+t(R)}{t(R)}\cong\frac{\mathbf{a}}{\mathbf{a}\cap
t(R)}=\frac{\mathbf{a}}{t(\mathbf{a})}$.

$3)\Longrightarrow 1)$ Since we have
$\text{Rej}_\mathcal{T}(M)=\mathbf{a}M$, for each $R$-module $M$, it
is easily verified that $G$ satisfies all the standard conditions 1-5.% of
%theorem \ref{prop.module heart of torsion pair}.
\end{proof}

\begin{cor} \label{cor.faithful hereditary pair}
If $\mathbf{t}=(\mathcal{T},\mathcal{F})$ is a faithful hereditary
torsion pair such that its heart $\mathcal{H}_\mathbf{t}$ is a
module category, then $\mathbf{t}$ is the right constituent pair of
a TTF triple in $R-\text{Mod}$ defined by an idempotent ideal
 $\mathbf{a}$ which is finitely generated on the left.
\end{cor}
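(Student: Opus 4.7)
The plan is to derive this corollary as an immediate specialization of Theorem~\ref{teor.hereditary case}, exploiting the fact that faithfulness of $\mathbf{t}$ collapses the reduction modulo $t(R)$ that appears throughout its proof.

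First I would record that, since $\mathbf{t}$ is faithful, we have $R\in\mathcal{F}$, hence $t(R)=0$ and consequently $\overline{R}:=R/t(R)=R$. Because $\mathcal{H}_\mathbf{t}$ is a module category, Theorem~\ref{prop.module heart of torsion pair} (together with Remark~\ref{rem.completion to main theorem}) produces a progenerator $G$ of $\mathcal{H}_\mathbf{t}$ in standard form with $P$ and $Q$ finitely generated projective and satisfying all the standard conditions $1$--$5$. Put $V:=H^{0}(G)$; note that $t(R)V=0$, so $V/t(R)V=V$.

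Next I would invoke assertion $2.b$ of Theorem~\ref{teor.hereditary case} for this $G$. Setting $\mathbf{a}:=\mathbf{b}=\operatorname{ann}_R(V/t(R)V)=\operatorname{ann}_R(V)$, that condition gives: $\mathbf{a}/t(R)=\mathbf{a}$ is an idempotent ideal of $R/t(R)=R$, it is finitely generated on the left, and $R/\mathbf{a}\in\mathcal{T}$. So $\mathbf{a}$ already has the required internal properties; only the identification of $\mathbf{t}$ with the right constituent pair of the TTF triple defined by $\mathbf{a}$ remains.

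For that last identification I would appeal directly to Steps~$1$ and~$2$ inside the proof of Theorem~\ref{teor.hereditary case}. Step~$1$ shows that $(\mathcal{T}\cap\overline{R}\text{-Mod},\mathcal{F})$ is the right constituent pair of a TTF triple in $\overline{R}\text{-Mod}$, characterised by $T\in\mathcal{T}\cap\overline{R}\text{-Mod}$ iff $\operatorname{Hom}_R(\overline{\mathbf{b}},T)=0$; Step~$2$ identifies the defining idempotent ideal as $\overline{\mathbf{b}}=\mathbf{b}/t(R)$, yielding $\mathcal{T}\cap\overline{R}\text{-Mod}=\{T\in\overline{R}\text{-Mod}:\mathbf{b}T=0\}=\operatorname{Gen}(R/\mathbf{b})$. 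In the faithful case $\overline{R}\text{-Mod}=R\text{-Mod}$, so this reads $\mathcal{T}=\{M\in R\text{-Mod}:\mathbf{a}M=0\}$, which is exactly the statement that $\mathbf{t}$ is the right constituent pair of the TTF triple on $R\text{-Mod}$ defined by $\mathbf{a}$.

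There is no real obstacle beyond bookkeeping: the only point to check carefully is that the argument of Steps~$1$ and~$2$ of the proof of Theorem~\ref{teor.hereditary case} never uses the boundedness hypothesis (which enters only in the implication to assertion $3$), so it applies verbatim under our hypotheses and, with $\overline{R}=R$, yields the TTF description on $R\text{-Mod}$ itself.
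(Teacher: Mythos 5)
Your proof is correct and is essentially the intended (but unwritten) argument: the corollary is precisely the specialization of Theorem~\ref{teor.hereditary case}, parts $1)\Rightarrow 2)$ and Steps~1--2 of its proof, to the case $t(R)=0$, and you correctly observe that boundedness is not used there. The only thing worth polishing is to make explicit that the TTF triple produced in Steps~1--2 lives \emph{a priori} in $\overline{R}\text{-Mod}$, which under faithfulness coincides with $R\text{-Mod}$, so no further reduction is needed.
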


\begin{cor} \label{cor.hereditary pairs in commutative}
Let $R$ be a commutative  ring and let
$\mathbf{t}=(\mathcal{T},\mathcal{F})$ be a hereditary torsion pair
in $R-\text{Mod}$. The heart $\mathcal{H}_\mathbf{t}$ is a module
category if, and only if,  $\mathbf{t}$ is  (left or right)
constituent pair of a centrally split TTF triple in $R-\text{Mod}$.
In that case $\mathcal{H}_\mathbf{t}$ is equivalent to
$R-\text{Mod}$.
\end{cor}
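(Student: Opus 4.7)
The plan is to reduce everything to Theorem \ref{teor.hereditary case} together with two classical facts from commutative algebra. First I would observe that, in a commutative ring, every hereditary torsion pair is automatically bounded: if $T\in\mathcal{T}$ is finitely generated by $t_{1},\dots,t_{n}$, then the kernel of the map $R\longrightarrow T^{n}$, $r\mapsto (rt_{1},\dots,rt_{n})$, equals $\text{ann}_{R}(T)$ (we use that $\text{ann}_{R}(T)$ is automatically two-sided in a commutative ring), so $R/\text{ann}_{R}(T)$ embeds in $T^{n}\in\mathcal{T}$ and heredity forces $R/\text{ann}_{R}(T)\in\mathcal{T}$. This legitimates the use of the equivalent condition $(3)$ of Theorem \ref{teor.hereditary case} as soon as $\mathcal{H}_{\mathbf{t}}$ is known to be a module category.

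Condition $(3)$ of Theorem \ref{teor.hereditary case} supplies a finitely generated (on the left) idempotent ideal $\mathbf{a}$ such that $\mathbf{t}$ is the right constituent pair of the TTF triple defined by $\mathbf{a}$. My second commutative-algebra input is the classical fact that any finitely generated idempotent ideal of a commutative ring is generated by an idempotent element: the determinant trick (a form of Nakayama's lemma) applied to $\mathbf{a}=\mathbf{a}\cdot\mathbf{a}$ produces $e\in\mathbf{a}$ with $(1-e)\mathbf{a}=0$, and evaluating at $e$ itself gives $e^{2}=e$ and $\mathbf{a}=Re$. Since $e$ is automatically central, the ring decomposes as $R\cong Re\times R(1-e)$, and under this decomposition the torsion, torsionfree and co-torsion classes become the two factor module categories. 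Hence both constituent torsion pairs of the TTF triple are split, i.e.\ the triple is centrally split.

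For the converse, together with the identification $\mathcal{H}_{\mathbf{t}}\simeq R-\text{Mod}$, I would argue in reverse. If $\mathbf{t}$ is a constituent pair of a centrally split TTF triple, then after swapping $e$ with $1-e$ if necessary we may assume $\mathbf{t}$ is the right constituent pair associated to a central idempotent $e$, so that $R\cong Re\times R(1-e)$ as rings. The product decomposition of $R-\text{Mod}$ lifts to $\mathcal{D}(R)$, and because the two factors annihilate each other one has $\text{Ext}_{R}^{i}(T,F)=0$ for $T\in\mathcal{T}$, $F\in\mathcal{F}$ and all $i\geq 0$. Consequently every object $M\in\mathcal{H}_{\mathbf{t}}$ splits in $\mathcal{D}(R)$ as $H^{-1}(M)[1]\oplus H^{0}(M)[0]$, and the assignment $M\mapsto (H^{-1}(M),H^{0}(M))$ gives an exact equivalence $\mathcal{H}_{\mathbf{t}}\longrightarrow\mathcal{F}\times\mathcal{T}\simeq R-\text{Mod}$.

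The main obstacle I anticipate is not in any single step, each of which is standard once framed correctly, but in effecting the translation between two languages: the TTF-triple picture produced by Theorem \ref{teor.hereditary case} and the ring-theoretic central-idempotent picture. Once the finitely generated idempotent ideal $\mathbf{a}$ has been identified with $Re$ for a central idempotent $e$, both the central splitting of the TTF triple and the equivalence $\mathcal{H}_{\mathbf{t}}\simeq R-\text{Mod}$ follow formally from the resulting product decomposition of $R$.
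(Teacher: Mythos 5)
Your proof is correct and follows essentially the same route as the paper: boundedness of the hereditary pair, condition~(3) of Theorem~\ref{teor.hereditary case} to produce a finitely generated idempotent ideal $\mathbf{a}$, the commutative-algebra fact that such an ideal is generated by an idempotent $e$, and the resulting central splitting $R\cong Re\times R(1-e)$.

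Two small points where you diverge, both improvements in self-containedness. First, you spell out why hereditary torsion pairs over commutative rings are automatically bounded (the embedding $R/\mathrm{ann}_R(T)\hookrightarrow T^n$), whereas the paper invokes boundedness without comment; making this explicit is the right thing to do, since Theorem~\ref{teor.hereditary case}(3) requires the bounded hypothesis. Second, for the equivalence $\mathcal{H}_{\mathbf t}\simeq R\text{-Mod}$ and the converse implication, the paper forward-references Corollary~\ref{cor.split and commutative case}, while you instead observe directly that $\mathrm{Ext}_R^i(T,F)=0$ for $T\in\mathcal T$, $F\in\mathcal F$ and all $i$ (a consequence of the ring decomposition), so every object of the heart splits as $H^{-1}(M)[1]\oplus H^0(M)[0]$ and the heart is equivalent to $\mathcal F\times\mathcal T\simeq Re\text{-Mod}\times R(1-e)\text{-Mod}\simeq R\text{-Mod}$. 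Both arguments are sound; yours avoids the forward reference at the cost of a short extra computation with the canonical triangle. The substance of the two proofs is the same.
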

\begin{proof}
Since $\mathbf{t}$ is bounded, last theorem says that $\mathbf{t}$
is the right constituent torsion pair of a TTF triple in
$R-\text{Mod}$ defined by an idempotent ideal $\mathbf{a}$ which is
finitely generated.
 But each finitely generated
idempotent ideal of a commutative ring is generated by an idempotent
element (see the proof of Lemma VI.8.6 in \cite{S}). Then the TTF
triple is centrally split. By Corollary \ref{cor.split and
commutative case} below, we have that $\mathcal{H}_\mathbf{t}$ is
equivalent to $R-\text{Mod}$.
\end{proof}

\section{When the progenerator is a sum of stalk complexes}

Recall that if $M$ and $N$ are $R$-modules, then
$\text{Ext}_R^1(M,N)=\text{Hom}_{\mathcal{D}(R)}(M,N[1])$ has a
canonical structure of $\text{End}_R(N)-\text{End}_R(M)-$bimodule
given by composition of morphisms in $\mathcal{D}(R)$. But then it
has also a structure of $\text{End}_R(M)^{op}-\text{End}_R(N)^{op}$,
by defining $\alpha^o\cdot\epsilon\cdot
f^o=f\circ\epsilon\circ\alpha$, for all $\alpha\in\text{End}_R(M)$
and $f\in\text{End}_R(N)$.

It is natural to expect that the 'simplest' case in which the heart
is a module category appears  when the progenerator of the heart can
be chosen to be a sum of stalk complexes. Our next result gives
criteria for that to happen.

\begin{prop} \label{prop.progenerator as sum of stalks}
The following assertions are equivalent:

\begin{enumerate}
\item $\mathcal{H}_\mathbf{t}$ has a progenerator of the form $V[0]\oplus
Y[1]$, where $V\in\mathcal{T}$ and $Y\in\mathcal{F}$;
\item There are $R$-modules  $V$ and $Y$ satisfying the following
properties:

\begin{enumerate}
\item $V$ is finitely presented and
$\mathcal{T}=\text{Pres}(V)\subseteq\text{Ker}(\text{Ext}_R^1(V,?))$;
\item $\text{Ext}_R^2(V,?)$ vanishes on $\mathcal{F}$;
\item $Y$ is a finitely generated projective $R/t(R)$-module which is in
${}^\perp\mathcal{T}$;
\item For each $F\in\mathcal{F}$, the module
$\frac{F/tr_Y(F)}{t(F/tr_Y(F))}$ embeds into a module in
$\mathcal{T}$, where $tr_Y(F)$ denote the trace of $Y$ in $F$.
\end{enumerate}
\end{enumerate}

In this case $\mathcal{H}_\mathbf{t}$ is equivalent to
$S-\text{Mod}$, where $S=\begin{pmatrix}\text{End}_R(Y)^{op} & 0\\
\text{Ext}_R^1(V,Y) & \text{End}_R(V)^{op}
\end{pmatrix}$, when viewing $\text{Ext}_R^1(V,Y)$ as a
$\text{End}_R(V)^{op}-\text{End}_R(Y)^{op}-$bimodule in the usual
way.
\end{prop}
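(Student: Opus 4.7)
The plan is to apply Theorem \ref{prop.module heart of torsion pair} to a complex $G$ in standard form that is quasi-isomorphic to $V[0]\oplus Y[1]$. I would fix a finite projective presentation $Q_V\xrightarrow{d_V}P_V\twoheadrightarrow V$ (available once $V$ is finitely presented), together with a surjection $R^n\twoheadrightarrow Y$ factoring as $R^n\twoheadrightarrow(R/t(R))^n\twoheadrightarrow Y$ (available once $Y$ is finitely generated over $R/t(R)$), set $X_Y=\ker(R^n\twoheadrightarrow Y)$, and form
$$G\;:=\;\bigl[\,0\to \ker(d_V)\oplus X_Y \to Q_V\oplus R^n \xrightarrow{(d_V,\,0)} P_V\to 0\,\bigr]$$
in degrees $-2,-1,0$. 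A direct cohomology computation gives $H^{-1}(G)=Y$ and $H^{0}(G)=V$, so that $G\simeq V[0]\oplus Y[1]$ in $\mathcal{D}(R)$.

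For $(2)\Rightarrow(1)$, I would verify the five standard conditions of Theorem \ref{prop.module heart of torsion pair} for $G$. Conditions 1 and 2 are immediate from (a) and the construction. Using the splitting $Q/X\cong \text{Im}(d_V)\oplus Y$, condition 3 reduces to $Y\subseteq \text{Rej}_{\mathcal{T}}(Y)$, which is exactly the statement $Y\in{}^{\perp}\mathcal{T}$ from (c). For condition 4, the projective presentation identifies $\text{Ext}_R^{1}(\text{Im}(d_V),-)$ with $\text{Ext}_R^{2}(V,-)$, which vanishes on $\mathcal{F}$ by (b); and $\text{Ext}_R^{1}(Y,-)$ vanishes on $\mathcal{F}$ because $Y$ is an $R$-direct summand of $(R/t(R))^n$, while $\text{Ext}_R^{1}(R/t(R),F)=0$ follows from the short exact sequence $0\to t(R)\to R\to R/t(R)\to 0$ combined with $\text{Hom}_R(t(R),F)=0$. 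For condition 5, I would take $h=(0,h_2)$ with $h_2:Y^{(I)}\to R/t(R)$ the universal trace map indexed by $I=\text{Hom}_R(Y,R/t(R))$; its cokernel $(R/t(R))/tr_Y(R/t(R))$ is an extension of its torsion-free part (which is in $\overline{\text{Gen}}(V)$ by (d) applied to $F=R/t(R)$) by its torsion part (which is in $\mathcal{T}\subseteq\overline{\text{Gen}}(V)$), and hence lies in $\overline{\text{Gen}}(V)$ by extension-closure of this subcategory.

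For $(1)\Rightarrow(2)$, (a) is supplied by Lemma \ref{lem.description  de t} applied to $G$, and (b) follows from projectivity of $V[0]$ via $\text{Ext}^{1}_{\mathcal{H}_{\mathbf{t}}}(V[0],F[1])=\text{Hom}_{\mathcal{D}(R)}(V[0],F[2])=\text{Ext}_R^{2}(V,F)$. For (c), the identity $\text{Hom}_R(Y,T)=\text{Ext}^{1}_{\mathcal{H}_{\mathbf{t}}}(Y[1],T[0])=0$ for $T\in\mathcal{T}$ is projectivity of $Y[1]$, hence $Y\in{}^{\perp}\mathcal{T}$; compactness of $Y[1]$ in $\mathcal{H}_{\mathbf{t}}$ combined with the AB5 property (so that $\mathcal{F}$ is closed under direct limits by \cite[Theorem 4.8]{PS}) forces $Y$ to coincide with a finitely generated submodule, and hence to be finitely generated as $R/t(R)$-module; any surjection $(R/t(R))^n\twoheadrightarrow Y$ with kernel $N\in\mathcal{F}$ then yields a short exact sequence $0\to N[1]\to(R/t(R))^n[1]\to Y[1]\to 0$ in $\mathcal{H}_{\mathbf{t}}$ which splits by projectivity of $Y[1]$, exhibiting $Y$ as an $R/t(R)$-projective summand of $(R/t(R))^n$. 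For (d), given $F\in\mathcal{F}$, I would choose $\beta$ so that $\phi:Y^{(\beta)}\to F$ has image $tr_Y(F)$; a cone-and-cohomology computation with respect to the HRS $t$-structure identifies the cokernel in $\mathcal{H}_{\mathbf{t}}$ of $\phi[1]:Y^{(\beta)}[1]\to F[1]$ with $(F/tr_Y(F))/t(F/tr_Y(F))[1]$, and the generation of $F[1]$ by $G^{(\alpha)}$ in $\mathcal{H}_{\mathbf{t}}$ then yields the required embedding of $(F/tr_Y(F))/t(F/tr_Y(F))$ into a module in $\mathcal{T}$.

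Finally, the equivalence $\mathcal{H}_{\mathbf{t}}\cong S\text{-Mod}$ follows from the Gabriel--Mitchell theorem applied to the progenerator $V[0]\oplus Y[1]$, together with a direct computation of $\text{End}_{\mathcal{H}_{\mathbf{t}}}(V[0]\oplus Y[1])^{op}$. One uses $\text{End}_{\mathcal{H}_{\mathbf{t}}}(V[0])=\text{End}_R(V)$, $\text{End}_{\mathcal{H}_{\mathbf{t}}}(Y[1])=\text{End}_R(Y)$, $\text{Hom}_{\mathcal{H}_{\mathbf{t}}}(V[0],Y[1])=\text{Ext}_R^{1}(V,Y)$ (by Lemma \ref{lem.Hom and Ext in H}), and $\text{Hom}_{\mathcal{H}_{\mathbf{t}}}(Y[1],V[0])=\text{Hom}_{\mathcal{D}(R)}(Y,V[-1])=0$, from which the matrix form of $S$ with the indicated bimodule action on $\text{Ext}_R^{1}(V,Y)$ drops out. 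The main obstacle is (d) in the direction $(1)\Rightarrow(2)$: the abstract generation property of $G$ in $\mathcal{H}_{\mathbf{t}}$ has to be tracked carefully through the HRS $t$-structure cohomology to give information about every $F\in\mathcal{F}$, since the standard condition 5 of Theorem \ref{prop.module heart of torsion pair} only directly produces this for $F=R/t(R)$.
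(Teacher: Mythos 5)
Your overall strategy is genuinely different from the paper's: you reduce assertion (1) to Theorem~\ref{prop.module heart of torsion pair} by packaging $V[0]\oplus Y[1]$ as a complex in standard form and checking the five standard conditions, whereas the paper verifies projectivity, generation, and compactness of $V[0]\oplus Y[1]$ directly in $\mathcal{H}_\mathbf{t}$ without ever invoking the standard-conditions criterion. Most of your verifications line up with the paper's ingredients (2.a from Lemma~\ref{lem.description  de t}; 2.b, 2.c from projectivity/compactness of the two stalk summands; the dimension shift identifying $\text{Ext}^1_R(\text{Im}(d_V),-)$ with $\text{Ext}^2_R(V,-)$; the matrix description of $S$).

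The gap is in your verification of standard condition 5 for $(2)\Rightarrow(1)$. You take $h_2:Y^{(I)}\to R/t(R)$ the universal trace map and assert that the cokernel $W=(R/t(R))/tr_Y(R/t(R))$ lies in $\overline{\text{Gen}}(V)$ ``by extension-closure of this subcategory''. But $\overline{\text{Gen}}(V)=\sigma[V]$ is closed under subobjects, quotients and coproducts, \emph{not} under extensions in $R$-Mod: for $R=\mathbb{Z}$ and $V=\mathbb{Z}/p\mathbb{Z}$ one has $\sigma[V]=\mathbb{F}_p\text{-Mod}$, yet $\mathbb{Z}/p^2\mathbb{Z}$ is an extension of two objects of $\sigma[V]$ that is not in $\sigma[V]$. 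Knowing $t(W)\in\mathcal{T}\subseteq\text{Gen}(V)$ and $W/t(W)\hookrightarrow T\in\mathcal{T}$ gives no embedding of $W$ into a $V$-generated module: the embedding $W/t(W)\hookrightarrow T$ does not lift to an embedding of $W$ without extra hypotheses. Moreover, varying the $\text{Im}(d_V)$-component of $h$ cannot help, since only the restriction to $H^{-1}(G)^{(I)}=Y^{(I)}$ matters for the cokernel, so you would in any case need $W$ or an even larger quotient to lie in $\overline{\text{Gen}}(V)$. The paper sidesteps this by working directly in $\mathcal{H}_\mathbf{t}$: for each $F\in\mathcal{F}$ it identifies the cokernel of $g[1]:Y^{(\text{Hom}_R(Y,F))}[1]\to F[1]$ with $F'[1]$, $F'=\frac{F/tr_Y(F)}{t(F/tr_Y(F))}$, uses 2.d to get $F'\hookrightarrow T$, hence an exact sequence $0\to T[0]\to T'[0]\to F'[1]\to 0$ in the heart showing $F'[1]\in\text{Gen}_{\mathcal{H}_\mathbf{t}}(V[0])$, and then deduces $F[1]\in\text{Gen}_{\mathcal{H}_\mathbf{t}}(G)$ from the already established projectivity of $G$. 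To repair your argument you would either have to re-derive generation of $R/t(R)[1]$ by $G$ along these lines (essentially reverting to the paper's approach, since condition~5 is an existential statement whose witness comes from an epimorphism in the heart), or supply an independent reason why this particular cokernel $W$ is $V$-subgenerated.

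Your $(1)\Rightarrow(2)$ outline is essentially the paper's, though the argument for 2.d is compressed: what is really needed is that only the $V[0]$-summand contributes to generating the cokernel $F'[1]$, which the paper isolates via $\text{Hom}_R(Y,F')\cong\text{Hom}_{\mathcal{H}_\mathbf{t}}(Y[1],F'[1])=0$. Likewise, the finite-generation step for $Y$ in 2.c should go through the explicit observation that compactness of $Y[1]$ gives preservation of coproducts of $\mathcal{F}$-modules by $\text{Hom}_R(Y,?)$, as the paper does, rather than a loose appeal to AB5.
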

\begin{proof}
By \cite[Theorem 4.8]{PS} and by condition 2.a, all throughout the
proof we can assume that $\mathcal{F}$ is closed under taking direct
limits in $R-\text{Mod}$.

$1)\Longrightarrow 2)$ Put $G=V[0]\oplus Y[1]$. By  lemma
\ref{lem.description  de t}, we get condition 2.a. On the other
hand, the projective condition of $V[0]$ in $\mathcal{H}_\mathbf{t}$
implies that
$0=\text{Ext}_{\mathcal{H}_\mathbf{t}}^1(V[0],F[1])=\text{Ext}_R^2(V,F)$,
for all $F\in\mathcal{F}$. Then condition 2.b also holds.

The projective condition of $Y[1]$ in $\mathcal{H}_\mathbf{t}$
implies that
$0=\text{Ext}_{\mathcal{H}_\mathbf{t}}^1(Y[1],F[1])=\text{Ext}_R^1(Y,F)$,
for all $F\in\mathcal{F}$ and that
$0=\text{Ext}_{\mathcal{H}_\mathbf{t}}^1(Y[1],T[0])=\text{Hom}_{\mathcal{D}(R)}(Y[1],T[1])
\cong\text{Hom}_R(Y,T)=0$, for all $T\in\mathcal{T}$. Then we have
that $Y\in {}^\perp\mathcal{T}$. Moreover, if
$\xymatrix{f:\frac{R}{t(R)}^{(I)}\ar@{>>}[r] & Y}$ is any
epimorphism, then $f$ is a retraction, which implies that $Y$ is a
projective $R/t(R)$-module. The fact that $Y$ is finitely generated
follows from the compactness of $Y[1]$ in $\mathcal{H}_\mathbf{t}$
since then  $\text{Hom}_R(Y,?)_{|
\mathcal{F}}\cong\text{Hom}_{\mathcal{D}(R)}(Y[1],?[1])_{|
\mathcal{F}}$ preserves coproducts of modules in $\mathcal{F}$. We
then get condition 2.c.

For each $F\in\mathcal{F}$, let us consider the canonical morphism
$\xymatrix{g:Y^{(\text{Hom}_R(Y,F))}\ar[r] & F}$. We then get the
morphism
$\xymatrix{Y[1]^{(\text{Hom}_{\mathcal{H}_\mathbf{t}}(Y[1],F[1]))}\cong
Y^{(\text{Hom}_R(Y,F))}[1]\ar[r]^{\hspace{2.6 cm}g[1]}& F[1]}$ whose
image is the trace of $Y[1]$ in $F[1]$ within the category
$\mathcal{H}_\mathbf{t}$. The cokernel of $g[1]$ is precisely the
stalk complex
$\frac{\text{Coker}(g)}{t(\text{Coker}(g))}[1]=\frac{F/tr_Y(F)}{t(F/tr_Y(F))}[1]$.
Due to the projectivity of $Y[1]$ in $\mathcal{H}_\mathbf{t}$, we
have that
$\text{Hom}_R(Y,\frac{\text{Coker}(g)}{t(\text{Coker}(g))})\cong
\text{Hom}_{\mathcal{H}_\mathbf{t}}(Y[1],\frac{\text{Coker}(g)}{t(\text{Coker}(g))}[1])=0$.
The fact that $V[0]\oplus Y[1]$ is a projective generator of
$\mathcal{H}_\mathbf{t}$ implies then that the canonical morphism
\hfill
$\xymatrix{q:V[0]^{(\text{Hom}_{\mathcal{H}_\mathbf{t}}(V[0],\frac{\text{Coker}(g)}{t(\text{Coker}(g))}[1]))}
\ar[r]&\frac{\text{Coker}(g)}{t(\text{Coker}(g))}[1]}$ is an
epimorphism in $\mathcal{H}_{\mathbf{t}}$. We necessarily have
$\text{Ker}(q)=T[0]$, for some $T\in\mathcal{T}$. Condition 2.d
follows then from the long exact sequence of homologies associated
to the triangle

\begin{center}
$\xymatrix{T[0] \ar[r] &
V[0]^{(\text{Hom}_{\mathcal{H}_\mathbf{t}}(V[0],\frac{\text{Coker}(g)}{t(\text{Coker}(g))}[1]))}
\ar[r]^{\hspace{1.25cm}q}
&\frac{\text{Coker}(g)}{t(\text{Coker}(g))}[1] \ar[r]^{\hspace{0.8
cm}+} &}$
\end{center}

$2)\Longrightarrow 1)$ From conditions 2.a and 2.b we deduce that
$\text{Ext}_{\mathcal{H}_\mathbf{t}}^1(V[0],?)$ vanishes on  stalk
complexes $T[0]$ and $F[1]$, for each $T\in\mathcal{T}$ and
$F\in\mathcal{F}$. Similarly,  from condition 2.c we deduce that
$\text{Ext}_{\mathcal{H}_\mathbf{t}}^1(Y[1],?)$ vanishes on all
those stalk complexes. It follows that $G:=V[0]\oplus Y[1]$ is a
projective object of $\mathcal{H}_\mathbf{t}$.

Knowing that $G$ is a projective object, in order to prove that $G$
is a generator of of $\mathcal{H}_\mathbf{t}$, we just need to prove
that it generates all stalk complexes $X$, with
$X\in\mathcal{T}[0]\cup\mathcal{F}[1]$. Note that from condition 2.a
we get that $V[0]$ generates all stalk complexes $T[0]$ and, hence,
that $\mathcal{T}[0]\subseteq\text{Gen}_{\mathcal{H}_\mathbf{t}}(G)$. If now we take
$F\in\mathcal{F}$, then the argument in the proof of the other
implication shows that the canonical morphism
$\xymatrix{Y[1]^{(\text{Hom}_{\mathcal{H}_\mathbf{t}}(Y[1],F[1]))}\cong
Y^{(\text{Hom}_R(Y,F))}[1]\ar[r]^{\hspace{2.65 cm}g[1]} &F[1]}$ has
as cokernel $F'[1]$, where $F'=\frac{F/tr_Y(F)}{t(F/tr_Y(F))}$. By
hypothesis we have a monomorphism $F'\rightarrowtail T$ and, hence,
an exact sequence $\xymatrix{0\ar[r] & F' \ar[r] &T \ar[r] &T'\ar[r]
&0}$, where $T$ and $T'$ are in $\mathcal{T}$. We then get an exact
sequence in $\mathcal{H}_\mathbf{t}$:

\begin{center}
$\xymatrix{0\ar[r] & T[0] \ar[r] &T'[0] \ar[r] &F'[1]\ar[r] &0}$
\end{center}
which shows that $F'[1]$ is generated by $V[0]$ and, hence, that
$F'[1]\in\text{Gen}_{\mathcal{H}_\mathbf{t}}(G)$. But we have an
exact sequence in $\mathcal{H}_\mathbf{t}$

\begin{center}
$\xymatrix{0\ar[r]
&\text{Im}_{\mathcal{H}_\mathbf{t}}(g[1])\ar[r]&F[1]\ar[r] &
F'[1]\ar[r]& 0}$
\end{center}
Then we have that
$\text{Im}_{\mathcal{H}_\mathbf{t}}(g[1])\in\text{Gen}_{\mathcal{H}_\mathbf{t}}(Y[1])\subseteq\text{Gen}_{\mathcal{H}_\mathbf{t}}(G)$
and
$F'[1]\in\text{Gen}_{\mathcal{H}_\mathbf{t}}(V[0])\subseteq\text{Gen}_{\mathcal{H}_\mathbf{t}}(G)$.
The projective condition of $G$ in $\mathcal{H}_\mathbf{t}$ proves
now that also $F[1]\in\text{Gen}_{\mathcal{H}_\mathbf{t}}(G)$.
Hence $G$ is a generator of $\mathcal{H}_\mathbf{t}$.

We finally prove that $G$ is compact in $\mathcal{H}_\mathbf{t}$,
which is equivalent to proving that $V[0]$ and $Y[1]$ are compact in
this category. For each family $(M_i)_{i\in I}$ of objects in
$\mathcal{H}_\mathbf{t}$, we have a family of exact sequences
in $\mathcal{H}_\mathbf{t}$:

\begin{center}
$\xymatrix{0\ar[r] & H^{-1}(M_i)[1]\ar[r] &M_i \ar[r] & H^0(M_i)[0]
\ar[r] & 0 & (i\in I)}$
\end{center}
Using this and the projectivity of $V[0]$ and $Y[1]$, the task is
reduced to check the following facts:

\begin{enumerate}
\item[i)] $\text{Hom}_R(Y,?)$ preserves coproducts of modules in $\mathcal{F}$;
\item[ii)] $\text{Hom}_R(V,?)$ preserves coproducts of
modules in $\mathcal{T}$;
\item[iii)] $\text{Ext}_R^1(V,?)$ preserves coproducts of
modules in $\mathcal{F}$.
\end{enumerate}
Conditions i) and ii) automatically hold since $Y$ and $V$ are
finitely generated modules. Condition iii) follows from the fact
that $V$ is finitely presented.

The final statement of the proposition is clear, because the ring $S=\begin{pmatrix}\text{End}_R(Y)^{op} & 0\\
\text{Ext}_R^1(V,Y) & \text{End}_R(V)^{op}
\end{pmatrix}$ is isomorphic to $\text{End}_{\mathcal{H}_\mathbf{t}}(V[0]\oplus
Y[1])^{op}$.
\end{proof}

We have now the following consequences of last proposition.

\begin{cor} \label{cor.progenerator stalk in 0}
Let $V$ be an $R$-module and consider the following conditions

\begin{enumerate}
\item $V$ is a classical 1-tilting module;
\item $\mathbf{t}=(\text{Pres}(V),\text{Ker}(\text{Hom}_R(V,?)))$ is a torsion pair in $R-\text{Mod}$ and $V[0]$ is a progenerator
of $\mathcal{H}_\mathbf{t}$;
\item $V$  finitely presented  and satifies the following
conditions:

\begin{enumerate}
\item
$\mathcal{T}:=\text{Pres}(V)=\text{Gen}(V)\subseteq\text{Ker}(\text{Ext}_R^1(V,?))$;
\item $\text{Ext}_R^2(V,?)$ vanishes on $\mathcal{F}:=\text{Ker}(\text{Hom}_R(V,?))$;
\item Each module of $\mathcal{F}$ embeds into a module of
$\mathcal{T}$.
\end{enumerate}
\end{enumerate}
Then the implications $1)\Longrightarrow 2)\Longleftrightarrow 3)$
hold true. Moreover, when conditions 2 or 3 hold, $\mathbf{t}$ is
also a
 torsion pair in the Grothendieck category
$\mathcal{G}:=\overline{Gen}(V)$, $V$ is a classical 1-tilting
object of $\mathcal{G}$ and the canonical functor
$\xymatrix{\mathcal{D}(\mathcal{G})\ar[r] &\mathcal{D}(R)}$ gives by
restriction an equivalence of categories
$\xymatrix{\mathcal{H}_\mathbf{t}(\mathcal{G})
\ar[r]^{\hspace{0.2cm}\sim} &\mathcal{H}_\mathbf{t}}$, where
$\mathcal{H}_\mathbf{t}(\mathcal{G})$ is the heart of the torsion
pair in $\mathcal{G}$.
\end{cor}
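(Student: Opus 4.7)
The plan is to obtain $2)\Leftrightarrow 3)$ as a direct specialization of Proposition~\ref{prop.progenerator as sum of stalks} to the case $Y=0$, to verify condition $3)$ directly when $1)$ holds, and to deduce the \emph{moreover} part from closure properties of $\mathcal{G}$ inside $R\text{-Mod}$.

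For $2)\Leftrightarrow 3)$, when $Y=0$ in Proposition~\ref{prop.progenerator as sum of stalks} the progenerator reduces to $V[0]$, condition 2.(c) becomes vacuous, and conditions 2.(a) and 2.(b) coincide verbatim with 3.(a) and 3.(b). Condition 2.(d) becomes 3.(c) once one observes that for $F\in\mathcal{F}$ one has $tr_{Y}(F)=0$ and $t(F)=0$, the latter because $t(F)\in\mathcal{T}=\text{Gen}(V)$ admits a surjection $V^{(I)}\twoheadrightarrow t(F)\hookrightarrow F$, which must vanish by the defining property of $\mathcal{F}=\text{Ker}(\text{Hom}_R(V,?))$.

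For $1)\Longrightarrow 3)$, a classical 1-tilting module $V$ is finitely presented with $\text{pd}_R(V)\leq 1$ by \cite[Proposition 1.3]{CT}, which gives 3.(b) immediately. The 1-tilting hypothesis yields $\mathcal{T}=\text{Gen}(V)=\text{Ker}(\text{Ext}_R^1(V,?))$, so for 3.(a) it remains to check $\text{Pres}(V)=\text{Gen}(V)$: for $M\in\text{Gen}(V)$ the canonical surjection $V^{(\text{Hom}_R(V,M))}\twoheadrightarrow M$ has kernel $K$, and the long exact sequence combined with $\text{Ext}_R^1(V,V^{(I)})=0$ and the surjectivity of the induced $\text{Hom}$ map forces $\text{Ext}_R^1(V,K)=0$, so $K\in\mathcal{T}$ and $M\in\text{Pres}(V)$. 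Condition 3.(c) follows since every injective $R$-module lies in $\text{Ker}(\text{Ext}_R^1(V,?))=\mathcal{T}$, so each $F\in\mathcal{F}$ embeds into its injective envelope.

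For the \emph{moreover} part, condition 3.(c) gives $\mathcal{F}\subseteq\mathcal{G}$ while $\mathcal{T}\subseteq\mathcal{G}$ is clear, so the canonical decomposition $0\to t(M)\to M\to M/t(M)\to 0$ applied to $M\in\mathcal{G}$ shows that $\mathbf{t}$ restricts to a torsion pair in $\mathcal{G}$. A standard two-step pushout argument, using that $\mathcal{T}$ is closed under extensions, shows that $\mathcal{G}$ is closed under extensions in $R\text{-Mod}$, whence $\text{Ext}_\mathcal{G}^1(A,B)\cong\text{Ext}_R^1(A,B)$ for $A,B\in\mathcal{G}$. Using this together with 3.(a)--3.(c), one verifies the three characterising conditions of \cite[Proposition 1.3]{CT} for $V$ to be classical 1-tilting in the Grothendieck category $\mathcal{G}$. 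Finally, both $\mathcal{H}_\mathbf{t}$ and $\mathcal{H}_\mathbf{t}(\mathcal{G})$ admit $V[0]$ as a progenerator whose endomorphism ring is $\text{End}_R(V)=\text{End}_\mathcal{G}(V)$, and the canonical triangulated functor $\mathcal{D}(\mathcal{G})\to\mathcal{D}(R)$ sends $V[0]$ to $V[0]$; by Gabriel--Mitchell each heart is equivalent to $\text{End}_R(V)^{op}\text{-Mod}$, and this canonical functor implements the equivalence $\mathcal{H}_\mathbf{t}(\mathcal{G})\simeq\mathcal{H}_\mathbf{t}$. The main technical difficulty lies in establishing $\text{pd}_\mathcal{G}(V)\leq 1$: higher $\text{Ext}$ groups in $\mathcal{G}$ need not agree with those in $R\text{-Mod}$, so this must be deduced from 3.(b) together with the fact that by 3.(c) every torsion-free object of $\mathcal{G}$ embeds into $\mathcal{T}$.
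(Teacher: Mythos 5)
Your reduction of $2)\Leftrightarrow 3)$ to Proposition~\ref{prop.progenerator as sum of stalks} with $Y=0$ is the right idea and matches the paper, and your direct proof of $1)\Longrightarrow 3)$ is a valid alternative to the paper's appeal to \cite[Proposition 5.3]{PS}. But there are two genuine gaps.

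First, in $3)\Longrightarrow 2)$ you cannot simply invoke Proposition~\ref{prop.progenerator as sum of stalks}, because that proposition presupposes that $\mathbf{t}=(\mathcal{T},\mathcal{F})$ is already a torsion pair, and condition 3 only gives data about $V$. You must first show that $\mathcal{T}=\text{Gen}(V)$ is closed under extensions. The paper does this explicitly: by pulling back an extension $0\to T\to M\to T'\to 0$ along $V^{(I)}\twoheadrightarrow T'$ one reduces to $T'=V^{(I)}$, and then the sequence splits since $\text{Ext}_R^1(V^{(I)},T)\cong\text{Ext}_R^1(V,T)^{I}=0$ by 3.(a). Without this step your argument is circular.

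Second, and more seriously, the claim that ``a standard two-step pushout argument \dots shows that $\mathcal{G}=\overline{\text{Gen}}(V)$ is closed under extensions in $R\text{-Mod}$'' is false. If it were true, then for any $R$-module $M$ the short exact sequence $0\to t(M)\to M\to M/t(M)\to 0$, whose outer terms lie in $\mathcal{G}$ by condition 3.(c), would force $M\in\mathcal{G}$, so $\mathcal{G}=R\text{-Mod}$ and $V$ would be $1$-tilting over $R$. But Theorem~\ref{teor.nontilting pair with stalk progenerator} constructs non-tilting $V$ satisfying conditions 2 and 3, so this must fail. The paper never claims such extension-closure; it instead proves directly that the comparison map $\text{Ext}_\mathcal{G}^1(V,X)\to\text{Ext}_R^1(V,X)$ is bijective for $X\in\mathcal{G}$, by embedding $X$ into an injective of $\mathcal{G}$ (which lies in $\mathcal{T}$) and using $\text{Ext}_R^1(V,\mathcal{T})=0$ to split the pushed-out sequence and conclude $M\hookrightarrow T\oplus V\in\mathcal{G}$. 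Relatedly, your final sentence acknowledges that $\text{pd}_\mathcal{G}(V)\leq 1$ is unresolved, but this is the wrong target anyway: the three-condition characterization from \cite[Proposition 1.3]{CT} is specific to module categories, whereas being $1$-tilting in $\mathcal{G}$ means $\text{Gen}(V)=\text{Ker}(\text{Ext}_\mathcal{G}^1(V,?))$, which the paper verifies via the $\text{Ext}$-comparison isomorphism plus quasi-tiltingness from Lemma~\ref{lem.description de t} and the arguments in \cite[Section 2]{CDT1}. Your proposed route does not reach that conclusion.
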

\begin{proof}
$1)\Longrightarrow 2)$ is a particular case of  \cite[Proposition
5.3]{PS}.

$2)\Longrightarrow 3)$ is a direct consequence of  proposition
\ref{prop.progenerator as sum of stalks}.

$3)\Longrightarrow 2)$ We  need to prove that
$\mathcal{T}=\text{Gen}(V)$ is closed under taking extensions in
$R-\text{Mod}$. In that case
$\mathbf{t}=(\text{Gen}(V),\text{Ker}(\text{Hom}_R(V,?)))$ is a
torsion pair in $R$-Mod and the implication will follow from
proposition \ref{prop.progenerator as sum of stalks}. Let
$0\rightarrow T\longrightarrow M\longrightarrow T'\rightarrow 0$ is
an exact sequence in $R-\text{Mod}$, with $T,T'\in\mathcal{T}$. We
want to prove that $M\in\mathcal{T}$. By pulling  back the exact
sequence along an epimorphism $\xymatrix{p:V^{(I)}\ar@{>>}[r] &
T'}$, we can assume without loss of generality that $T'=V^{(I)}$.
But in this case the sequence splits since
$\text{Ext}_R^1(V^{(I)},T')\cong\text{Ext}_R^1(V,T)^I=0$.

Let us prove now the final statement. By lemma \ref{lem.description
de t}, we know that $V$ is classical quasi-tilting. It essentially
follows from the arguments in \cite[Section 2]{CDT1}  that  $V$ is a
classical 1-tilting object of $\mathcal{G}:=\overline{Gen}(V)$. But
it also follows from something stronger that we need,  namely, that
the canonical map $\xymatrix{\varphi :\text{Ext}_\mathcal{G}
^1(V,X)\ar[r] &\text{Ext}_R ^1(V,X)}$ is an isomorphism, for each
$X\in\mathcal{G}$. It is clearly injective. To prove the
surjectivity, let $\xymatrix{0 \ar[r] & X \ar[r] & M\ar[r] &V
\ar[r]& 0}$ (*) be an exact sequence in $R\text{-Mod}$. Recall that
the injective objects of $\mathcal{G}$ are modules in
$\text{Gen}(V)=\mathcal{T}$ (see \cite[Introduction]{GG}). This
implies that we have a monomorphism $\xymatrix{u:X
\hspace{0.02cm}\ar@{^(->}[r] & T}$, with $T\in\mathcal{T}$. By
pushing out the sequence (*) along the monomorphism $u$ and using
the fact that $\text{Ext}_R^1(V,T)=0$, we get a monomorphism
$\xymatrix{M\hspace{0.04cm}\ar@{^(->}[r] &T\oplus V}$, which implies
that $M\in\mathcal{G}$. Then the sequence (*) lives in $\mathcal{G}$
and, hence, $\varphi$ is an isomorphism.

 On the other
hand, the inclusion functor
$\xymatrix{\mathcal{G}\hspace{0.02cm}\ar@{^(->}[r]& R\text{-Mod}}$
is exact and, hence, extends to a triangulated functor
$\xymatrix{j:\mathcal{D}(\mathcal{G})\ar[r] &\mathcal{D}(R)}$,
which need be neither faithful nor full, but induces by restriction
a functor
$\xymatrix{\tilde{j}:\mathcal{H}_\mathbf{t}(\mathcal{G})\ar[r]
&\mathcal{H}_\mathbf{t}:=\mathcal{H}_\mathbf{t}(R-\text{Mod})}$. We
claim that, up to natural isomorphism, the following diagram of
functors is commutative, where $S=\text{End}_R(V)^{op}$:

$$\xymatrix{\mathcal{H}_{\mathbf{t}}(\mathcal{G}) \ar[rd]_{Hom_{D(\mathcal{G})}(V[0],?) \hspace{0.2cm}} \ar[rr]^{\tilde{j}} & & \mathcal{H}_{\mathbf{t}} \ar[dl]^{\hspace{0.1 cm}Hom_{D(R)}(V[0],?)} \\& S-Mod}$$

Due to the projective condition of $V[0]$ both in
$\mathcal{H}_\mathbf{t}(\mathcal{G})$ and $\mathcal{H}_\mathbf{t}$,
we just need to see that the maps induced by the functor $j$:

\begin{center}
$\xymatrix{\text{Hom}_\mathcal{G}(V,T)\cong
\text{Hom}_{\mathcal{D}(G)}(V[0],T[0])\ar[r]&
\text{Hom}_{\mathcal{D}(R)}(V[0],T[0])\cong\text{Hom}_R(V,T)}$

$\xymatrix{\text{Ext}_\mathcal{G}^1(V,F)\cong\text{Hom}_{\mathcal{D}(G)}(V[0],F[1])
\ar[r]&\text{Hom}_{\mathcal{D}(R)}(V[0],F[1])\cong\text{Ext}_R^1(V,F)}$
\end{center}
are isomorphisms. The first one is clear and the second one has been
proved in the previous paragraph. By assertion 2, the functor
$\xymatrix{\text{Hom}_{\mathcal{H}_\mathbf{t}}(V[0],?):\mathcal{H}_\mathbf{t}\ar[r]
& S\text{-Mod}}$ is an equivalence of categories. Since $V$ is a
classical 1-tilting object of $\mathcal{G}$, the functor
$\xymatrix{\text{Hom}_{\mathcal{H}_\mathbf{t}(\mathcal{G})}(V[0],?):\mathcal{H}_\mathbf{t}(\mathcal{G})\ar[r]&S\text{-Mod}}$
is also an equivalence (see \cite[Proposition 5.3]{PS}). It follows
that
$\xymatrix{\tilde{j}:\mathcal{H}_\mathbf{t}(\mathcal{G})\ar[r]&\mathcal{H}_\mathbf{t}}$
is an equivalence of categories.
\end{proof}

The following is now very natural.

\begin{ques} \label{ques.classical tilting pair}
Let $\mathbf{t}=(\mathcal{T},\mathcal{F})$ be a torsion pair in
$R\text{-Mod}$ satisfying the equivalent conditions 2 and 3 of
corollary \ref{cor.progenerator stalk in 0}. Is $\mathbf{t}$ a
classical tilting torsion pair?
\end{ques}

\begin{lemma} \label{lem.quasi-tilting implies projective}
Let $V$ be a classical quasi-tilting $R$-module such that
$\text{Gen}(V)$ is closed under submodules and let $t(R)$ be the
trace of $V$ in $R$. An endomorphism $\beta$ of $V$ satisfies that
$\text{Im}(\beta )\subseteq t(R)V$ if, and only if, it factors
through a (finitely generated) projective $R$-module.
\end{lemma}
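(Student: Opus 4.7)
For the ``if'' direction, if $\beta$ factors as $V \xrightarrow{f} P \xrightarrow{g} V$ with $P$ a projective $R$-module, I would reduce $P$ to a finitely generated free module $R^n$ (possible because $V$ is finitely generated, so $f(V) \subseteq P$ lies in a finitely generated free summand of any ambient $R^{(I)}$ of which $P$ is a direct summand). Writing $f=(f_1,\ldots,f_n)$ with $f_i \in V^* := \text{Hom}_R(V,R)$ and $g(e_i) = w_i \in V$, one obtains $\beta(v) = \sum_i f_i(v) w_i$; since each $f_i(v) \in t(R)$, the image lies in $t(R)V$.

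For the ``only if'' direction the plan is to exploit that $\text{Gen}(V)$, being an abelian subcategory of $R-\text{Mod}$ closed under submodules, is a Grothendieck category in which $V$ is a projective generator whose Hom-functor preserves coproducts: $V$ generates $\text{Gen}(V)$ by definition; $V$ is projective in $\text{Gen}(V)$ because $\text{Ext}_R^1(V,-)$ vanishes on $\text{Gen}(V)$ (quasi-tilting hypothesis); and $\text{Hom}_R(V,-)$ commutes with coproducts because $V$ is finitely generated (classical hypothesis). Setting $S = \text{End}_R(V)$, this yields an equivalence $\text{Hom}_R(V,-) : \text{Gen}(V) \xrightarrow{\sim} S^{op}-\text{Mod}$ with quasi-inverse $V \otimes_{S^{op}} -$, and in particular the counit $\epsilon_N : V \otimes_{S^{op}} \text{Hom}_R(V, N) \xrightarrow{\sim} N$ is an isomorphism for every $N \in \text{Gen}(V)$.

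I would then introduce the natural $S^{op}$-linear map $\phi : V^* \otimes_R V \to S$ sending $f \otimes w$ to the endomorphism $v \mapsto f(v) w$, whose image is exactly the ideal $J$ of endomorphisms factoring through a finitely generated projective. Since $f(v) w \in t(R)V$, the map $\phi$ corestricts to $\phi' : V^* \otimes_R V \to I_{t(R)} := \text{Hom}_R(V, t(R)V)$, where $I_{t(R)}$ is identified with $\{\beta \in S : \text{Im}(\beta) \subseteq t(R)V\}$ via composition with the inclusion $t(R)V \hookrightarrow V$. Both $t(R)$ and $t(R)V$ lie in $\text{Gen}(V)$ (the former as a sum of $V$-generated submodules of $R$, the latter by the heredity hypothesis), so applying $V \otimes_{S^{op}} -$ to $\phi'$ and using the isomorphisms $\epsilon_{t(R)} : V \otimes_{S^{op}} V^* \xrightarrow{\sim} t(R)$ and $\epsilon_{t(R)V} : V \otimes_{S^{op}} I_{t(R)} \xrightarrow{\sim} t(R)V$ to identify source and target, the induced map becomes the canonical multiplication $t(R) \otimes_R V \to t(R)V$, which is tautologically surjective.

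Since $V \otimes_{S^{op}} -$ is an equivalence of categories, it reflects epimorphisms, forcing $\phi'$ itself to be surjective and hence $J = \text{Im}(\phi) = I_{t(R)}$, which is exactly the equivalence claimed. The main hurdle I anticipate is the bookkeeping for the one-sided module structures --- keeping straight that $V^*$ is a left $S^{op}$-module via precomposition, that $V$ is a right $S^{op}$-module via evaluation, and that the associativity isomorphism $V \otimes_{S^{op}} (V^* \otimes_R V) \cong (V \otimes_{S^{op}} V^*) \otimes_R V$ combined with $\epsilon_{t(R)}$ reproduces $t(R) \otimes_R V$ --- rather than any serious conceptual difficulty.
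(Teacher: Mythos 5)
Your proof is correct, but it takes a genuinely different route from the paper's. The paper argues directly: it builds an explicit epimorphism $\rho\colon V^{(\Hom_R(V,R)\times V)}\twoheadrightarrow t(R)V$ (a composition of $q^{(V)}$ and $\pi'$), observes that $\Ker(\rho)\in\mathcal{T}\subseteq\Ker(\Ext_R^1(V,?))$ because $\mathbf{t}$ is hereditary, so the corestriction $\tilde\beta\colon V\to t(R)V$ of $\beta$ lifts along $\rho$, and then simply reads off from the construction of $\rho$ that $\beta=\pi\circ i^{(V)}\circ q^{(V)}\circ\gamma$ factors through the free module $R^{(V)}$. This is a short, hands-on argument that uses only the $\Ext^1$-vanishing of $V$ against the torsion class. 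Your approach instead invokes the full Morita/Gabriel--Mitchell machinery: you establish that $V$ is a compact projective generator of $\text{Gen}(V)$, hence $\Hom_R(V,-)\colon\text{Gen}(V)\xrightarrow{\sim}S^{op}\text{-Mod}$ is an equivalence with quasi-inverse $V\otimes_{S^{op}}-$, identify the ideal $J$ of endomorphisms factoring through a finitely generated projective with the trace ideal $\text{Im}(V^*\otimes_R V\to S)$, and show $J=I_{t(R)}$ by applying the quasi-inverse and recognizing the resulting map as the (surjective) multiplication $t(R)\otimes_R V\to t(R)V$. Both proofs use the heredity of $\text{Gen}(V)$ essentially, but in different spots (the paper to place $\Ker(\rho)$ in $\mathcal{T}$, you to place $t(R)V$ in $\text{Gen}(V)$ so the counit applies). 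Your version is more conceptual and clarifies that the statement is really a trace-ideal identity in $\End_R(V)$; the paper's version is more economical and requires no appeal to $\text{Gen}(V)$ being a Grothendieck category or to the Morita equivalence. As you anticipated, the bimodule bookkeeping (left vs.\ right $S$-structures, the associativity $V\otimes_{S^{op}}(V^*\otimes_R V)\cong(V\otimes_{S^{op}}V^*)\otimes_R V$, and that $V^*=\Hom_R(V,t(R))$) all does work out, and since $\text{Gen}(V)$ is closed under quotients, $V\otimes_{S^{op}}-$ viewed into $R\text{-Mod}$ still reflects epimorphisms, so the final deduction is sound.
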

\begin{proof}
We put $\mathbf{t}=(\text{Gen}(V),\text{Ker}(\text{Hom}_R(V,?)))$,
which is a hereditary torsion pair.  The 'if' part is clear.
Conversely, suppose that $\text{Im}(\beta )\subseteq t(R)V$. Let
$\xymatrix{q:V^{(\text{Hom}_R(V,R))}\ar@{>>}[r]& t(R)=tr_V(R)}$,
$\xymatrix{i:t(R)\hspace{0.03cm}\ar@{^(->}[r] &R}$, $\xymatrix{\pi
:R^{(V)} \ar@{>>}[r]& V}$ and $\xymatrix{j:t(R)V \hspace{0.04cm}
\ar@{^(->}[r]&  V}$ be the canonical morphisms and  let
$\xymatrix{\pi':t(R)^{(V)}\ar@{>>}[r] &t(R)V}$ be the epimorphism
given by the restriction of $\pi$ to $t(R)^{(V)}$. We have a
commutative diagram

$$\xymatrix{V^{(Hom_{R}(V,R) \times V)} \ar@{>>}[dr]^{\rho}\ar@{>>}[r]^{\hspace{0.7cm}q^{(V)}} &t(R)^{(V)} \ar[r]^{i^{(V)}} \ar@{>>}[d]^{\pi^{'}}& R^{(V)} \ar[d]^{\pi} \\& t(R)V \ar@{^(->}[r]^{j} & V}$$

where $\rho :=\pi'\circ q^{(V)}$. We have a factorization
$j\circ\tilde{\beta}=\beta$, where
$\tilde{\beta}\in\text{Hom}_R(V,t(R)V)$. Due to the hereditary
condition of $\mathbf{t}$, we know that $\text{Ker}(\rho
)\in\mathcal{T}\subseteq\text{Ker}(\text{Ext}_R^1(V,?))$, which
implies $\tilde{\beta}$ factors through $\rho$. Fix a morphism
$\xymatrix{\gamma :V\ar[r] & V^{(\text{Hom}_R(V,R)\times V)}}$ such
that $\tilde{\beta}=\rho\circ\gamma$. Then we have:

\begin{center}
$\beta =j\circ\tilde{\beta}=j\circ\rho\circ\gamma =\pi\circ
i^{(V)}\circ q^{(V)}\circ\gamma$,
\end{center}
so that $\beta$ factors through $R^{(V)}$.
\end{proof}

\begin{cor} \label{cor.progenerator sum-of-stalks hereditary}
Let us assume that $\mathbf{t}=(\mathcal{T},\mathcal{F})$ is a
hereditary torsion pair in $R\text{-Mod}$. The following assertions
are equivalent:

\begin{enumerate}
\item $\mathcal{H}_\mathbf{t}$ has a progenerator of the form $V[0]\oplus
Y[1]$, where $V\in\mathcal{T}$ and $Y\in\mathcal{F}$;
\item There are $R$-modules  $V$ and $Y$ satisfying the following
properties:

\begin{enumerate}
\item $V$ is finitely presented and
$\mathcal{T}=\text{Pres}(V)\subseteq\text{Ker}(\text{Ext}_R^1(V,?))$;
\item $\text{Ext}_R^2(V,?)$ vanishes on $\mathcal{F}$;
\item $Y$ is a finitely generated projective $R/t(R)$-module which is in
${}^\perp\mathcal{T}$;
\item For each $F\in\mathcal{F}$, the module $F/tr_Y(F)$ is in
$\mathcal{T}$,  where $tr_Y(F)$ denotes the trace of $Y$ in $F$.
\end{enumerate}
\end{enumerate}
In this case, $\mathbf{t}':=(\mathcal{T}\cap\frac{R}{t(R)}-\text{Mod},\mathcal{F})$
is a torsion pair in $\frac{R}{t(R)}-Mod$ which is the right
constituent of a TTF triple in this category and has the property
that $\frac{V}{t(R)V}[0]\oplus Y[1]$ is a progenerator of
$\mathcal{H}_{\mathbf{t}'}$. Moreover, the forgetful functor
$\xymatrix{\mathcal{H}_{\mathbf{t}'}\ar[r] &\mathcal{H}_\mathbf{t}}$
is faithful.
\end{cor}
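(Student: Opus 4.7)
The plan is to verify the three parts of the final statement in succession. Put $S=R/t(R)$ and $V'=V/t(R)V$.

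First, to produce the torsion pair $\mathbf{t}'$, I would note that $\mathcal{F}\subseteq S\text{-Mod}$, because for any $F\in\mathcal{F}$ one has $\text{Hom}_R(t(R),F)=0$ (since $\mathbf{t}$ is hereditary, $t(R)\in\mathcal{T}$), which forces $t(R)F=0$. Hence the torsion decomposition for $\mathbf{t}$ of any $M\in S\text{-Mod}$ automatically stays inside $S\text{-Mod}$, and $\mathbf{t}'=(\mathcal{T}\cap S\text{-Mod},\mathcal{F})$ is a torsion pair in $S\text{-Mod}$. To see it is the right constituent of a TTF triple, I would set $\mathbf{b}=tr_{S}(Y)$, which is an idempotent ideal of $S$ by $S$-projectivity of $Y$. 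One inclusion, $\mathcal{T}\cap S\text{-Mod}\subseteq\{M\in S\text{-Mod}:\mathbf{b}M=0\}$, is immediate from $Y\in{}^\perp\mathcal{T}$. For the reverse inclusion, given $M\in S\text{-Mod}$ with $\text{Hom}_S(Y,M)=0$, the projectivity of $Y$ over $S$ allows one to lift any $Y\rightarrow M/t(M)$ along the surjection $M\rightarrow M/t(M)$ to a map $Y\rightarrow M$, which must be zero; this shows $tr_{Y}(M/t(M))=0$, and condition (d) then forces $M/t(M)\in\mathcal{T}\cap\mathcal{F}=0$.

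Next, to show that $V'[0]\oplus Y[1]$ is a progenerator of $\mathcal{H}_{\mathbf{t}'}$, I would apply the equivalence $(1)\Leftrightarrow(2)$ of the present corollary, now inside $S\text{-Mod}$ with $V'$ and $Y$ in the roles of $V$ and $Y$. Conditions (c) and (d) transfer at once from the hypotheses on $V$ and $Y$, and the identification $\text{Pres}_S(V')=\mathcal{T}\cap S\text{-Mod}$ needed for (a) follows from applying $?\otimes_R S$ to a $V$-presentation of any $T\in\mathcal{T}\cap S\text{-Mod}$. The base-change spectral sequence for $R\rightarrow S$ yields, for any $N\in S\text{-Mod}$, the five-term exact sequence
\[0\rightarrow\text{Ext}_S^1(V',N)\rightarrow\text{Ext}_R^1(V,N)\rightarrow\text{Hom}_S(\text{Tor}_1^R(S,V),N)\rightarrow\text{Ext}_S^2(V',N)\rightarrow\text{Ext}_R^2(V,N),\]
and setting $N=T\in\mathcal{T}\cap S\text{-Mod}$ together with $\text{Ext}_R^1(V,T)=0$ finishes the verification of (a).

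I expect condition (b) to be the main obstacle. Setting $N=F\in\mathcal{F}$ in the same sequence and using $\text{Ext}_R^2(V,F)=0$, it suffices to show that $\text{Hom}_S(\text{Tor}_1^R(S,V),F)=0$. I would address this by exploiting the hereditariness of $\mathbf{t}$: the long Tor sequence associated with $0\rightarrow t(R)\rightarrow R\rightarrow S\rightarrow 0$ embeds $\text{Tor}_1^R(S,V)$ into $t(R)\otimes_R V$, and the latter is a homomorphic image of a coproduct of copies of $V$, hence lies in $\mathcal{T}$. Since $\mathbf{t}$ is hereditary, the subobject $\text{Tor}_1^R(S,V)$ also lies in $\mathcal{T}$, so it has trivial Hom to $F\in\mathcal{F}$, completing the proof of (b).

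Finally, the forgetful functor $\mathcal{H}_{\mathbf{t}'}\rightarrow\mathcal{H}_\mathbf{t}$ is the restriction to hearts of the triangulated functor $\mathcal{D}(S)\rightarrow\mathcal{D}(R)$ coming from the exact inclusion $S\text{-Mod}\hookrightarrow R\text{-Mod}$. Because cohomology commutes with this inclusion and $\mathcal{T}\cap S\text{-Mod}\subseteq\mathcal{T}$, the functor is $t$-exact and hence induces an exact additive functor between the hearts; moreover, it is conservative, since the cohomology of any $X\in\mathcal{H}_{\mathbf{t}'}$ computed over $S$ coincides with that computed over $R$, so its vanishing as a collection of $R$-modules already forces $X=0$ in $\mathcal{H}_{\mathbf{t}'}$. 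An exact conservative additive functor between abelian categories is faithful, which gives the last assertion.
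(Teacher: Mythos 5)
Your argument is correct, but it departs from the paper's proof at two substantial points, and these are worth noting.

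For the verification of condition (b) for $\bar{V}=V/t(R)V$ over $S=R/t(R)$, you invoke the Cartan--Eilenberg change-of-rings spectral sequence $\mathrm{Ext}_S^p(\mathrm{Tor}_q^R(S,V),N)\Rightarrow\mathrm{Ext}_R^{p+q}(V,N)$ and its five-term exact sequence, reducing to the vanishing of $\mathrm{Hom}_S(\mathrm{Tor}_1^R(S,V),F)$, which you deduce from hereditariness: $\mathrm{Tor}_1^R(S,V)$ embeds into $t(R)\otimes_R V$, a quotient of $t(R)^{(I)}$ with $t(R)\in\mathcal{T}$, hence lies in $\mathcal{T}$ and has no maps to $\mathcal{F}$. (Your wording ``homomorphic image of a coproduct of copies of $V$'' is slightly imprecise --- it is a quotient of a coproduct of copies of $t(R)$, which in turn lies in $\mathrm{Gen}(V)$ --- but the conclusion $t(R)\otimes_R V\in\mathcal{T}$ holds.) The paper instead observes that for a hereditary pair, the injective envelope and first cosyzygy of any $F\in\mathcal{F}$ are the same over $R$ and over $\bar{R}$, and then directly proves the composite $\mathrm{Ext}^1_{\bar{R}}(\bar{V},M)\to\mathrm{Ext}^1_R(\bar{V},M)\to\mathrm{Ext}^1_R(V,M)$ is injective by a hands-on section-lifting argument; condition (b) then follows by dimension shift. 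Your spectral-sequence route is shorter and simultaneously handles the identifications needed for (a), at the cost of bringing in machinery the paper avoids.

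For the faithfulness of the forgetful functor $\mathcal{H}_{\mathbf{t}'}\to\mathcal{H}_\mathbf{t}$, your argument --- that the triangulated functor $\mathcal{D}(S)\to\mathcal{D}(R)$ is $t$-exact (because cohomology commutes with the exact forgetful functor and $\mathcal{T}\cap S\text{-Mod}\subseteq\mathcal{T}$, $\mathcal{F}\subseteq\mathcal{F}$), hence restricts to an exact functor of hearts, which is conservative (an object of $\mathcal{H}_{\mathbf{t}'}$ with vanishing image has vanishing cohomology), and an exact conservative additive functor is faithful --- is clean, correct, and noticeably simpler than the paper's. The paper instead uses its Lemma on quasi-tilting modules factoring through projectives to show $\mathrm{Ext}^1_R(\bar{V},F)\cong\mathrm{Ext}^1_R(V,F)$ and that the canonical map $\mathrm{Ext}^1_{\bar{R}}(\bar{V},F)\to\mathrm{Ext}^1_R(\bar{V},F)$ is injective, then propagates injectivity of $\mathrm{Hom}$-maps from the progenerator to all of $\mathcal{H}_{\mathbf{t}'}$ via closure under coproducts and cokernels. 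Your abstract argument also makes the ``exact'' part of the functor's behaviour explicit, which the paper does not state.

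Two small points: you do not address the equivalence $(1)\Leftrightarrow(2)$ itself (which the paper dispatches by citing the earlier proposition on sum-of-stalk progenerators; note that in the hereditary case the weaker condition there --- that $\frac{F/tr_Y(F)}{t(F/tr_Y(F))}$ embeds into $\mathcal{T}$ --- forces that quotient into $\mathcal{T}\cap\mathcal{F}=0$, giving $F/tr_Y(F)\in\mathcal{T}$), and when you apply the corollary to $\mathbf{t}'$ over $S$ you should observe that $\mathbf{t}'$ is again hereditary in $S\text{-Mod}$, which follows since an $S$-submodule of a module in $\mathcal{T}\cap S\text{-Mod}$ is still in $\mathcal{T}$ and still an $S$-module.
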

\begin{proof}
All throughout the proof we put $I=t(R)$ and $\bar{M}=M/IM$, for each $R$-module $M$.
The equivalence of assertions 1 and 2 is a direct consequence of
proposition \ref{prop.progenerator as sum of stalks}. From
theorem \ref{teor.hereditary case} and its proof, we know that
$(\mathcal{T}\cap\bar{R}-\text{Mod},\mathcal{F})$ is the right
constituent torsion pair of a TTF triple
$(\mathcal{C}_I,\mathcal{T}_I,\mathcal{F})$  in
$\bar{R}-\text{Mod}$. Moreover, by property 2.c, the class
$\text{Ker}(\text{Hom}_{\bar{R}}(Y,?))$  contains
$\mathcal{T}\cap\bar{R}-\text{Mod}$ and  is closed
 under taking quotients. Using property 2.d, it then follows that
 the inclusion
 $\text{Ker}(\text{Hom}_{\bar{R}}(Y,?))\subseteq\mathcal{T}\cap\bar{R}-\text{Mod}$ also holds,
 which implies that $\mathcal{C}_I=\text{Gen}(Y)$. If now $\mathbf{a}$ is the two-sided  ideal of $R$ given by the
equality $\bar{\mathbf{a}}=\frac{\mathbf{a}}{I}=tr_Y(\frac{R}{I})$,
then $\bar{\mathbf{a}}$ is the idempotent ideal of $\bar{R}$ which
defines the TTF triple and, by the proof of theorem
\ref{teor.hereditary case}, we know that
$\mathbf{a}=\text{ann}_R(\bar{V})$ and that
$\text{add}(\bar{V})=\text{add}(R/\mathbf{a})$, so that $\bar{V}$ is
a progenerator of $\frac{R}{\mathbf{a}}-\text{Mod}$.

Now the $\bar{R}$-modules $\bar{V}$ and $Y$ satisfy the conditions
2.a, 2.c and 2.d with respect to the torsion pair
$\mathbf{t}'=(\mathcal{T}_I,\mathcal{F})$ of $\bar{R}-\text{Mod}$.
On the other hand,  $\mathbf{t}$ and $\mathbf{t}'$ are hereditary
torsion pairs in $R-\text{Mod}$ and $\bar{R}-\text{Mod}$,
respectively. Then, for each $F\in\mathcal{F}$,  the injective
envelope $E(F)$ in $R-\text{Mod}$ is also in $\mathcal{F}$ (see
\cite[Proposition VI.3.2]{S}). In particular, we have that
$E(F)\in\bar{R}-\text{Mod}$, so that $E(F)$ is also the injective
envelope of $F$ as a $\bar{R}$-module and, hence, the first cosyzygy
$\Omega^{-1}(F)$ is the same in $R-\text{Mod}$ and
$\bar{R}-\text{Mod}$. In order to check condition 2.b for $\bar{V}$,
we need to check that
$\text{Ext}_{\bar{R}}^{1}(\bar{V},\Omega^{-1}(F))=0$. But, using
condition 2.b for $V$,  our needed goal will follow from
 something stronger that we will prove.
Namely, that if $\xymatrix{p=p_V:V\ar@{>>}[r]&\bar{V}}$ is the
canonical projection, then the composition

\begin{center}
$\xymatrix{\varphi
:\text{Ext}^1_{\bar{R}}(\bar{V},M)\ar[r]^{\hspace{0.2cm}can}&\text{Ext}^1_R(\bar{V},M)\ar[rr]^{\text{Ext}_R^1(p,M)}&&
\text{Ext}^1_R(V,M)}$
\end{center}
is a monomorphism, for all $M\in\bar{R}-\text{Mod}$.

Let $\xymatrix{0 \ar[r] & M \ar[r]^{j} &N \ar[r]^{q} & \bar{V}
\ar[r]& 0}$ be an exact sequence in $\bar{R}-\text{Mod}$ which
represents an element of $\text{Ker}(\varphi)$. Then the projection
$\xymatrix{p:V\ar@{>>}[r]&\bar{V}}$ factors through $q$. Fixing a
morphism $\xymatrix{g:V\ar[r] & N}$ such that $q\circ g=p$ and
taking into account that $IN=0$, we get a morphism
$\xymatrix{\bar{g}:\bar{V}\ar[r] &N}$ which is a section for $q$.

In order to prove the final assertion, with the notation of the
previous lemma, consider the following composition of morphisms of
abelian groups, where $F\in\mathcal{F}$

\begin{center}
$\xymatrix{\text{Ext}_R^1(j\circ\rho
,F):\text{Ext}_R^1(V,F)\ar[rr]^{\hspace{0.7cm}\text{Ext}_R^1(j,F)}
&&\text{Ext}_R^1(t(R)V,F)\ar[rr]^{\text{Ext}_R^1(\rho
,F)\hspace{0.65cm}} &&\text{Ext}_R^1(V^{(\text{Hom}_R(V,R)\times
V)},F)}$.
\end{center}
We have that $\text{Ext}_R^1(\rho ,F)$ is a monomorphism, because
$\text{Ker}(\rho )\in\mathcal{T}$ and hence
$\text{Hom}_R(\text{Ker}(\rho ),F)=0$. But
$\text{Ext}_R^1(j\circ\rho ,F)=0$ since $j\circ\rho$ factors through
a projective $R$-module. We then get that $\text{Ext}_R^1(j,F)$ is
the zero map, for each $F\in\mathcal{F}$. By considering the
canonical exact sequence $\xymatrix{0 \ar[r] & t(R)V
\hspace{0.03cm}\ar@{^(->}[r]^{\hspace{0.3cm}j} & V \ar@{>>}[r]&
\bar{V} \ar[r] & 0}$ and applying to it the long exact sequence of
$\text{Ext}(?,F)$, we get:

\begin{center}
$\xymatrix{0=\text{Hom}_R(t(R)V,F)\ar[r]&\text{Ext}_R^1(\bar{V},F)\ar[r]&\text{Ext}_R^1(V,F)\ar[r]^{0\hspace{0.4cm}}&\text{Ext}_R^1(t(R)V,F)}$
\end{center}
which proves that
$\text{Ext}_R^1(\bar{V},F)\cong\text{Ext}_R^1(V,F)$, for each
$F\in\mathcal{F}$. Moreover, by the two previous paragraphs, we get
that the map
$\xymatrix{\text{Ext}^1_{\bar{R}}(\bar{V},F)\ar[r]^{can}&\text{Ext}^1_R(\bar{V},F)}$
is a monomorphism.

Let us put $\bar{G}:=\bar{V}[0]\oplus Y[1]$. We claim that the map
$\xymatrix{\text{Hom}_{\mathcal{H}_{t'}}(\bar{G},M)\ar[r]
&\text{Hom}_{\mathcal{H}_{t}}(\bar{G},M)}$ is injective, for all
$M\in\mathcal{H}_{t'}$. Bearing in mind that we have  isomorphisms
of abelian groups

\begin{center}
$\text{Hom}_{\mathcal{H}_{t'}}(Y[1],M)\cong\text{Hom}_{\bar{R}}(Y,H^{-1}(M))=\text{Hom}_R(Y,H^{-1}(M))\cong\text{Hom}_{\mathcal{H}_{t}}(Y[1],M)$,
\end{center}
our task reduces to check that the canonical map
$\xymatrix{\text{Hom}_{\mathcal{H}_{t'}}(\bar{V}[0],M)\ar[r]&\text{Hom}_{\mathcal{H}_{t}}(\bar{V}[0],M)}$
is injective. But we have the following commutative diagram:

$$\xymatrix{0 \ar[r] & Ext_{\bar{R}}^{1}(\bar{V},H^{-1}(M)) \ar[r] \ar@{^(->}[d]^{can}& Hom_{\mathcal{H}_{\mathbf{t}^{'}}}(\bar{V}[0],M) \ar[r] \ar[d] & Hom_{\bar{R}}(\bar{V},H^{0}(M)) \ar[r] \ar[d]^{\wr}& Ext_{\bar{R}}^{2}(\bar{V},F)=0\\ 0 \ar[r] & Ext_{R}^{1}(\bar{V},H^{-1}(M)) \ar[r] & Hom_{\mathcal{H}_{\mathbf{t}}}(\bar{V}[0],M) \ar[r] & Hom_{R}(\bar{V},H^{0}(M)) \ar[r] & Ext_{R}^{2}(\bar{V},F) }$$

The right vertical arrow is an isomorphism since $H^0(M)$ is a
$\bar{R}$-module, and the left vertical arrow is a monomorphism. It
then follows that the central vertical arrow  is a monomorphism, as
desired.

Let us fix any object $M\in\mathcal{H}_{\mathbf{t}'}$ and consider
the full subcategory $\mathcal{C}_M$ of $\mathcal{H}_{\mathbf{t}'}$
consisting of the objects $N$ such that the canonical map
$\xymatrix{\text{Hom}_{\mathcal{H}_{t'}}(N,M)\ar[r]
&\text{Hom}_{\mathcal{H}_{t}}(N,M)}$ is a monomorphism. This
subcategory is closed under taking coproducts and cokernels and, by
the previous paragraph, it contains $\bar{G}$. We then have
$\mathcal{C}_M=\mathcal{H}_{\mathbf{t}'}$ and, since this is true
for any $M\in\mathcal{H}_{\mathbf{t}'}$, we conclude that the
forgetful functor
$\xymatrix{\mathcal{H}_{\mathbf{t}'}\ar[r]&\mathcal{H}_\mathbf{t}}$
is faithful.
\end{proof}

\begin{rem}
It can be easily derived from the proof of corollary
\ref{cor.progenerator sum-of-stalks hereditary} that the functor
$\xymatrix{\mathcal{H}_{\mathbf{t}'}\ar[r]&\mathcal{H}_{\mathbf{t}}}$
is full if, and only if, each exact sequence $\xymatrix{0 \ar[r] & Y
\ar[r] & M \ar[r] & V/IV \ar[r] & 0}$ in $R-\text{Mod}$ satisfies
that $IM=0$.
\end{rem}

\begin{prop} \label{prop.non-HKM modular heart}
Let $\mathbf{t}=(\mathcal{T},\mathcal{F})$ be hereditary and suppose
that it is the left constituent torsion pair of a TTF triple in $R-\text{Mod}$. Then
$\mathcal{H}_\mathbf{t}$ is a module category if, and only if, there
is a finitely generated projective $R$-module $P$ such that
$\mathcal{T}=\text{Gen}(P)$. In such case, the following assertions
hold:

\begin{enumerate}
\item $\mathbf{t}$ is HKM if, and only if, there is a finitely generated projective $R$-module $Q'$ such that $\text{Hom}_R(Q',P)=0$ and
$\text{add}(\frac{Q'}{t(Q')})=\text{add}(\frac{R}{t(R)})$. In
general, $\mathbf{t}$ need not be an HKM torsion pair;
\item $\mathbf{t}$ is the right constituent of a TTF triple in $R-\text{Mod}$ if, and only if, $P$ is finitely generated over its
endomorphism ring.
\end{enumerate}
\end{prop}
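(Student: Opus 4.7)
Throughout I use $\mathbf{a}$ for the idempotent two-sided ideal of $R$ associated to $\mathbf{t}$: since $\mathbf{t}=(\mathcal{T},\mathcal{F})$ is the left constituent of a TTF triple, $\mathcal{F}=\{M:\mathbf{a}M=0\}$, $\mathcal{T}=\text{Gen}(\mathbf{a})=\{M:\mathbf{a}M=M\}$, and the torsion radical of $\mathbf{t}$ is $t(M)=\mathbf{a}M$; in particular $t(R)=\mathbf{a}$. A useful preliminary observation is that $\text{Hom}_R(F,T)=0$ for all $F\in\mathcal{F}$ and $T\in\mathcal{T}$: the image of such a map lies in $\mathcal{T}$ (as a submodule of $T$, by heredity) and in $\mathcal{F}$ (as a quotient of $F$, since $\mathcal{F}$ is a TTF class, hence closed under quotients), hence in $\mathcal{T}\cap\mathcal{F}=0$.

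For the \emph{if} direction of the main equivalence, I would verify that $P[0]\oplus(R/\mathbf{a})[1]$ is a progenerator of $\mathcal{H}_\mathbf{t}$ via Corollary \ref{cor.progenerator sum-of-stalks hereditary} with $V=P$ and $Y=R/\mathbf{a}$: conditions (a) and (b) are immediate from the projectivity of $P$; condition (c) is the observation above; and condition (d) is automatic since every $F\in\mathcal{F}$ is an $R/\mathbf{a}$-module, so $tr_{R/\mathbf{a}}(F)=F$. For the \emph{only if} direction, let $G$ be a progenerator of $\mathcal{H}_\mathbf{t}$ in standard form and put $V:=H^0(G)$; by Lemma \ref{lem.description de t}, $V$ is a classical quasi-tilting module with $\mathcal{T}=\text{Gen}(V)$. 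The trace of $V$ in $R$ is then the largest submodule of $R$ lying in $\mathcal{T}$, which coincides with $\mathbf{a}$, and since $V\in\mathcal{T}$ we have $\mathbf{a}V=V$. Hence $\text{id}_V$ has image contained in (trace-of-$V$)$\cdot V$; applying Lemma \ref{lem.quasi-tilting implies projective} produces a factorization of $\text{id}_V$ through a finitely generated projective $R$-module, so $V$ is itself finitely generated projective and $P:=V$ works.

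For the \emph{if} direction of assertion 1, the plan is to verify that $P^\bullet:=(Q'\xrightarrow{0}P)$, concentrated in degrees $-1$ and $0$, satisfies condition (3) of Proposition \ref{prop.HKM}. Standard condition 1 is immediate. The hypothesis $\text{add}(Q'/\mathbf{a}Q')=\text{add}(R/\mathbf{a})$ supplies a surjection $h\colon(Q')^n\twoheadrightarrow R/\mathbf{a}$ whose cokernel is $0\in\mathcal{T}$, yielding standard condition 5. The hypothesis $\text{Hom}_R(Q',P)=0$, together with the projectivity of $Q'$ and $\mathcal{T}=\text{Gen}(P)$, gives $\text{Hom}_R(Q',\mathcal{T})=0$, so $\text{Ker}(0)=Q'\subseteq\text{Rej}_\mathcal{T}(Q')$. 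Finally, from the surjection $(Q')^n\twoheadrightarrow R/\mathbf{a}$ one obtains $\mathbf{a}+tr_{Q'}(R)=R$; hence if $\text{Hom}_R(Q',M)=0$, i.e.\ $tr_{Q'}(R)\cdot M=0$, then $M=RM=\mathbf{a}M$, showing $\mathcal{X}(P^\bullet)\subseteq\mathcal{T}$.

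Conversely, if $\mathbf{t}$ is HKM via a complex $\tilde{P}^\bullet\colon\tilde{Q}\xrightarrow{\tilde{d}}\tilde{P}$, Corollary \ref{cor.classical tilting progenerator} identifies $\tilde{P}^\bullet$ with a classical tilting progenerator of $\mathcal{H}_\mathbf{t}$, and the \emph{only if} of the main equivalence applied to $V:=H^0(\tilde{P}^\bullet)$ shows $V$ to be finitely generated projective. The epimorphism $\tilde{P}\twoheadrightarrow V$ therefore splits, $\text{Im}(\tilde{d})$ is a projective summand of $\tilde{P}$, and in $\mathcal{D}(R)$ one has $\tilde{P}^\bullet\cong V[0]\oplus Q'[1]$ with $Q':=\text{Ker}(\tilde{d})$ finitely generated projective. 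Since $V[0]\oplus(R/\mathbf{a})[1]$ is also a progenerator (by the \emph{if} part of the main equivalence, using $\text{add}(V)=\text{add}(P)$), the uniqueness of progenerators up to add-closure, combined with the orthogonality $\text{Hom}_{\mathcal{H}_\mathbf{t}}(V[0],F[1])=0=\text{Hom}_{\mathcal{H}_\mathbf{t}}(F[1],V[0])$ for all $F\in\mathcal{F}$, forces $\text{add}(Q')=\text{add}(R/\mathbf{a})$. The classical-tilting vanishing $\text{Hom}_{\mathcal{D}(R)}(\tilde{P}^\bullet,\tilde{P}^\bullet[1])=0$ then gives $\text{Hom}_R(Q',V)=0$, hence $\text{Hom}_R(Q',P)=0$. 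The claim that $\mathbf{t}$ need not be HKM will be justified by an explicit example in the final section.

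Assertion 2 reduces to the well-known criterion that, for a finitely generated projective $R$-module $P$, the class $\text{Gen}(P)=\{M:tr_P(R)\cdot M=M\}$ is closed under products in $R-\text{Mod}$ if and only if $P$ is finitely generated over its endomorphism ring; this is in turn equivalent to $\mathcal{T}$ being a TTF class, i.e., to $\mathbf{t}$ being the right constituent of a TTF triple. The principal obstacle is the \emph{only if} of assertion 1: the identification $\text{add}(Q')=\text{add}(R/\mathbf{a})$ rests on knowing beforehand (from the main equivalence) that $V=H^0(\tilde{P}^\bullet)$ is projective so that the complex decomposes into stalks in $\mathcal{D}(R)$, after which the identification is enforced by uniqueness of progenerators and the degree-wise Hom-vanishing between stalk summands of $V[0]\oplus(R/\mathbf{a})[1]$.
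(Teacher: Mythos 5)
The main equivalence and assertion 2 are handled correctly and essentially as in the paper; your preliminary observation that $\text{Hom}_R(\mathcal{F},\mathcal{T})=0$ replaces the paper's citation of Stenstr\"om's Lemma VI.8.3 with an equivalent elementary argument, which is fine. The \emph{if} direction of assertion 1 goes through condition (3) of Proposition \ref{prop.HKM} rather than the paper's condition (2); this is a valid alternative and your verifications (standard conditions 1 and 5, $\text{Ker}(0)\subseteq\text{Rej}_\mathcal{T}(Q')$, and $\mathcal{X}(P^\bullet)\subseteq\mathcal{T}$ via $\mathbf{a}+tr_{Q'}(R)=R$) are correct.

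There is, however, a genuine gap in the \emph{only if} direction of assertion 1. You write that Corollary \ref{cor.classical tilting progenerator} "identifies $\tilde{P}^\bullet$ with a classical tilting progenerator of $\mathcal{H}_\mathbf{t}$," but that corollary's condition (1) is the conjunction "$\tilde{P}^\bullet$ is a classical tilting complex \emph{and} $\mathbf{t}$ is its associated HKM torsion pair," and you are only given the second half. Being HKM via $\tilde{P}^\bullet$ does \emph{not} make $\tilde{P}^\bullet$ a progenerator of the heart: by Proposition \ref{prop.HKM} the progenerator is the three-term complex $G=(t(\text{Ker}\,\tilde d)\hookrightarrow\tilde Q\to\tilde P)$, and $G\not\simeq\tilde P^\bullet$ in $\mathcal{D}(R)$ unless $t(\text{Ker}\,\tilde d)=0$. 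Indeed, with $Q':=\text{Ker}\,\tilde d$ and $V$ projective one has $\tilde P^\bullet\cong V[0]\oplus Q'[1]$ in $\mathcal{D}(R)$, but if $t(Q')\neq 0$ then $Q'\notin\mathcal{F}$ so $Q'[1]\notin\mathcal{H}_\mathbf{t}$ and $\tilde P^\bullet$ is not even an object of the heart. Consequently you cannot invoke $\text{Hom}_{\mathcal{D}(R)}(\tilde P^\bullet,\tilde P^\bullet[1])=0$, and your stated conclusion $\text{add}(Q')=\text{add}(R/\mathbf{a})$ is not what the proposition asserts — the assertion is $\text{add}(Q'/t(Q'))=\text{add}(R/t(R))$, which coincides with yours only in the case $t(Q')=0$ that you have implicitly (and without justification) assumed. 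The paper's route avoids this: it works with $G$, shows $G\cong V[0]\oplus(Q'/t(Q'))[1]$ once $V$ is known projective, compares $\text{add}_{\mathcal{H}_\mathbf{t}}(G)$ and $\text{add}_{\mathcal{H}_\mathbf{t}}(P[0]\oplus(R/t(R))[1])$ via the additive functors $H^0,H^{-1}$ to get both $\text{add}(V)=\text{add}(P)$ and $\text{add}(Q'/t(Q'))=\text{add}(R/t(R))$, and derives $\text{Hom}_R(Q',V)=0$ from the HKM datum $V\in\mathcal{X}(\tilde P^\bullet)$ rather than from any tilting condition on $\tilde P^\bullet$. Finally, the paper exhibits the counterexample (the endomorphism ring of an infinite-dimensional vector space) inside the proof rather than deferring it, so your proof is also incomplete on that point.
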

\begin{proof}
 \emph{'If' part}: Let $P$ be a finitely generated projective
$R$-module such that $\mathcal{T}=\text{Gen}(P)$.  We will check
that  $V=P$ and $Y=\frac{R}{t(R)}$ satisfy conditions 2.a-d of
corollary \ref{cor.progenerator sum-of-stalks hereditary}. All these
properties  are trivially satisfied, except the fact that
$Y\in{}^\perp\mathcal{T}$. For that, we consider the TTF triple
$(\mathcal{T},\mathcal{F},\mathcal{F}^\perp)$. By \cite[Lemma
VI.8.3]{S}, we know that $\mathcal{T}\subseteq\mathcal{F}^\perp$. It
particular,
$Y=R/t(R)\in\mathcal{F}={}^\perp(\mathcal{F}^\perp)\subseteq
{}^\perp\mathcal{T}$.

 \emph{'Only if' part}: Let $\mathbf{a}$ be the idempotent ideal
 which defines the TTF triple, so that $\mathcal{T}=\{T\in R-\text{Mod}:$
 $\mathbf{a}T=T\}$. By theorem \ref{prop.module heart of torsion
 pair}, we have a progenerator $G:= \cdots \longrightarrow 0\longrightarrow X\stackrel{j}{\longrightarrow}Q\stackrel{d}{\longrightarrow}P\longrightarrow
 0 \longrightarrow \cdots$, where $P$ and $Q$ are finitely generated projective and
 $\mathcal{T}=\text{Gen}(V)$, where $V=H^0(G)$. We then have
 $\mathbf{a}M=t(M)=tr_V(M)$, for each $R$-module $M$. In particular,
 we have $\mathbf{a}=t(R)=tr_V(R)$ and, by applying lemma \ref{lem.quasi-tilting implies
 projective} to the identity $1_V:V\longrightarrow V$, we conclude that $V$ is a finitely generated
 projective module.

 We next prove assertions 1 and 2:

 \vspace*{0.3cm}

 1) If $Q'$ exists, then the complex $P^\bullet := \cdots \longrightarrow 0\longrightarrow Q'\stackrel{0}{\longrightarrow}P\longrightarrow
 0 \longrightarrow \cdots$, concentrated in degrees $-1$ and $0$, satisfies assertion 2
 of proposition \ref{prop.HKM} since we know that $P[0]\oplus
 \frac{R}{t(R)}[1]$ is a progenerator of $\mathcal{H}_\mathbf{t}$.

 Conversely, suppose that $\mathbf{t}$ is HKM and let $P^\bullet := \cdots \longrightarrow 0\longrightarrow Q\stackrel{d}{\longrightarrow}P'\longrightarrow
 0 \longrightarrow \cdots $ be an HKM complex whose associated torsion pair is
 $\mathbf{t}$. Then, by proposition \ref{prop.HKM}, we know that the
 complex

 \begin{center}
 $G:= \cdots \longrightarrow 0\longrightarrow t(\text{Ker (d)})\longrightarrow Q\stackrel{d}{\longrightarrow}P'\longrightarrow
 0\longrightarrow \cdots$,
 \end{center}
 concentrated in degrees $-2,-1,0$, is a progenerator of
 $\mathcal{H}_\mathbf{t}$. We then have that
 $\text{add}_{\mathcal{H}_\mathbf{t}}(G)=\text{add}_{\mathcal{H}_\mathbf{t}}(P[0]\oplus\frac{R}{t(R)}[1])$.
 In particular, we get that $V:=H^0(G)$ is a projective module and, hence,
 also $\text{Im}(d)$ is projective. It follows that, up to
 isomorphism in the category $\mathcal{C}(R)$, we can rewrite $G$ as

\begin{center}
$\cdots \longrightarrow 0\longrightarrow
t(Q')\stackrel{\begin{pmatrix} 0\\ \iota
\end{pmatrix}}{\longrightarrow}\text{Im}(d)\oplus
Q'\stackrel{\begin{pmatrix} 1 & 0\\ 0 & 0
\end{pmatrix}}{\longrightarrow}\text{Im}(d)\oplus H^0(G)\longrightarrow
0 \longrightarrow \cdots$,
\end{center}
where $\iota :t(Q')\hookrightarrow Q'$ is the inclusion. This in
turn implies that $P^\bullet$ is isomorphic in $\mathcal{C}(R)$ to
the complex

\begin{center}
$\cdots \longrightarrow 0\longrightarrow \text{Im}(d)\oplus
Q'\stackrel{\begin{pmatrix} 1 & 0\\ 0 & 0
\end{pmatrix}}{\longrightarrow}\text{Im}(d)\oplus H^0(G)\longrightarrow
0 \longrightarrow \cdots$.
\end{center}
The fact that
$\text{add}(V)=\text{add}(H^0(G))=\text{add}(H^0(P[0]\oplus\frac{R}{t(R)}[1])=\text{add}(P)$
and $V\in\mathcal{X}(P^\bullet )$ implies that
$\text{Hom}_R(Q',P)=0$. Moreover, we have
$\text{add}(Q'/t(Q'))=\text{add}(H^{-1}(G))=\text{add}(H^{-1}(P[0]\oplus\frac{R}{t(R)}[1])=\text{add}(R/t(R))$.

In order to show that, in general, the pair $\mathbf{t}$ need not be
HKM, we consider a field $K$, an infinite dimensional $K$-vector
space $P$ and view it as left module over $R=\text{End}_K(P)$. It is
well-known that $P$ is a faithful simple projective $R$-module, so
that $\mathcal{T}=\text{Add}({}_RP)=\text{Gen}({}_RP)$ is closed
under taking submodules and, hence, $\mathbf{t}$ is hereditary.
However the faithful condition of ${}_RP$ implies that each
projective $R$-module embeds in a direct product of copies of $P$.
Then it does not exists a finitely generated projective $R$-module
$Q'$ such that $\text{Hom}_R(Q',P)=0$ and
$\text{add}(Q'/t(Q'))=\text{add}(R/t(R))$. Hence $\mathbf{t}$ is not
HKM.

\vspace*{0.3cm}

2) $\mathbf{t}$ is the right constituent pair of a TTF triple if,
and only if, $\mathcal{T}=\text{Gen}(P)$ is closed under taking
products in $R-\text{Mod}$. But this is equivalent to saying that
each product of copies of $P$ is in $\text{Gen}(P)$. By \cite[Lemma,
Section 1]{CM}), this happens exactly when $P$ is finitely generated
over its endomorphism ring.
\end{proof}

 Recall that a ring is left
\emph{semihereditary} when its finitely generated left ideals are
projective.

\begin{exem} \label{exem.sum of stalks}
 Let $\mathbf{a}$ be an idempotent two-sided ideal of $R$, let
$(\mathcal{C},\mathcal{T},\mathcal{F})$ be the associated TTF triple
in $R-\text{Mod}$ and let $\mathbf{t}=(\mathcal{T},\mathcal{F})$ be
its right constituent torsion pair. The following assertions are
equivalent:

\begin{enumerate}
\item $\frac{R}{\mathbf{a}}[0]\oplus\frac{\mathbf{a}}{t(\mathbf{a})}[1]$ is a
progenerator of $\mathcal{H}_\mathbf{t}$;
\item $\mathcal{H}_\mathbf{t}$ has a progenerator of the form $V[0]\oplus
Y[1]$, with $V\in\mathcal{T}$ and $Y\in\mathcal{F}$;
\item $\mathbf{a}$ is finitely generated on the left and  $\text{Ext}_R^2(R/\mathbf{a},?)$ vanishes on $\mathcal{F}$.
\end{enumerate}
In particular, if $R$ is left semi-hereditary and $\mathbf{t}$ is
the right constituent pair of a TTF triple in $R-\text{Mod}$, then
$\mathcal{H}_\mathbf{t}$ is a module category if, and only if, the
associated idempotent ideal is finitely generated on the left.

\end{exem}
\begin{proof}
 $1)\Longrightarrow 2)$ is clear.

$2)\Longrightarrow 3)$ By lemma \ref{lem.description  de t}, we know
that $V$ is finitely presented and
$\mathcal{T}=\text{Gen}(V)\subseteq\text{Ker}(\text{Ext}_R^1(V,?))$.
But we also have that $\mathcal{T}=\{T\in R-\text{Mod}:$
$\mathbf{a}T=0\}\cong\frac{R}{\mathbf{a}}-\text{Mod}$. We then get
that $V$ is a finitely presented generator of
$\frac{R}{\mathbf{a}}-\text{Mod}$ such that
$\text{Ext}_{R/\mathbf{a}}^1(V,?)=0$. That is, $V$ a progenerator of
$\frac{R}{\mathbf{a}}-\text{Mod}$, which implies that
$\text{add}_{R-\text{Mod}}(R/\mathbf{a})=\text{add}_{R-\text{Mod}}(V)$.
Then $R/\mathbf{a}$ is a finitely presented left $R$-module and,
hence, $\mathbf{a}$ is finitely generated as a left ideal. The fact
that $\text{Ext}_R^2(R/\mathbf{a},?)$ vanishes on $\mathcal{F}$
follows from the fact that, by corollary \ref{cor.progenerator
sum-of-stalks hereditary}, we know that $\text{Ext}_R^2(V,?)$
vanishes on $\mathcal{F}$.

$3)\Longrightarrow 1)$ We take $V=\frac{R}{\mathbf{a}}$ and
$Y=\frac{\mathbf{a}}{t(\mathbf{a})}$. Then conditions 2.a, 2.b and
2.d of corollary \ref{cor.progenerator sum-of-stalks hereditary} hold since
$F/tr_Y(F)$ is in $\mathcal{T}$, for all $F\in\mathcal{F}$. We just
need to prove that $Y$ is a projective $R/t(R)$-module since it is
clearly in ${}^\perp\mathcal{T}=\mathcal{C}$. Let $0\rightarrow
K\hookrightarrow Q\stackrel{q}{\longrightarrow}\mathbf{a}\rightarrow
0$ be an exact sequence, with $Q$ a finitely generated projective
$R$-module. The canonical projection $K\twoheadrightarrow K/t(K)$
extends to $Q$ since
$\text{Ext}_R^1(\mathbf{a},\frac{K}{t(K)})\cong\text{Ext}_R^2(\frac{R}{\mathbf{a}},\frac{K}{t(K)})=0$.
It follows that the canonical monomorphism
$\iota:K/t(K)\longrightarrow Q/t(Q)$ splits. But its cokernel is
$\frac{Q}{K+t(Q)}\cong\frac{\frac{Q}{K}}{\frac{K+t(Q)}{K}}\cong\frac{\mathbf{a}}{q(t(Q))}$.
It follows that this latter one is a projective $R/t(R)$-module,
which implies that it is in $\mathcal{F}$ when viewed as an
$R$-module. But then
$\frac{t(\mathbf{a})}{q(t(Q))}\in\mathcal{T}\cap\mathcal{F}=0$.
Therefore we have $q (t(Q))=t(\mathbf{a})$ and
$\frac{\mathbf{a}}{t(\mathbf{a})}$ is projective as a left
$R/t(R)$-module.
\end{proof}

We are now able to give a second significative class of rings for
which we can identify all hereditary torsion pairs whose
heart is a module category.

\begin{prop} \label{prop.semihereditary}
Let $R$ be a left semihereditary ring and let $V$ be a finitely
presented quasi-tilting $R$-module whose associated torsion pair
$\mathbf{t}=(\text{Gen}(V),\text{Ker}(\text{Hom}_R(V,?)))$ is
hereditary. The following assertions are equivalent:

\begin{enumerate}
\item  If $\mathbf{a}=\text{ann}_R(V/t(R)V)$ then
$\mathbf{a}/t(R)$ is an idempotent ideal of $R/t(R)$, which is
finitely generated on the left,  and there is a monomorphism
$\xymatrix{R/\mathbf{a} \hspace{0.05cm}\ar@{^(->}[r]
&(V/t(R)V)^{(n)}}$, for some natural number $n$.

\item The heart $\mathcal{H}_\mathbf{t}$ is a module category.
\end{enumerate}
In this case $\mathcal{H}_\mathbf{t}$ is equivalent to
$S-\text{Mod}$, where
$S=\begin{pmatrix}\text{End}_R(\frac{\mathbf{a}}{t(R)})^{op} & 0\\
\text{Ext}_R^1(V,\frac{\mathbf{a}}{t(R)}) & \text{End}_R(V)^{op}
\end{pmatrix}$.
\end{prop}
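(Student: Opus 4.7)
The plan is to apply Corollary \ref{cor.progenerator sum-of-stalks hereditary} with the candidate progenerator $V[0]\oplus Y[1]$, where $Y=\mathbf{a}/t(R)$. The concluding description $\mathcal{H}_\mathbf{t}\simeq S-\text{Mod}$ then drops out of Proposition \ref{prop.progenerator as sum of stalks}, which computes the endomorphism ring of precisely such a progenerator as the upper-triangular matrix ring displayed in the statement.

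For the implication $(2)\Rightarrow(1)$, I would invoke Theorem \ref{teor.hereditary case}. Since $\mathcal{H}_\mathbf{t}$ is a module category, it admits a standard-form progenerator $G$, and $V':=H^0(G)$ is then a classical quasi-tilting module realizing the same torsion pair $\mathbf{t}$; part (2.b) of that theorem says that $\mathbf{b}/t(R)$ is idempotent and left-finitely generated, where $\mathbf{b}:=\text{ann}_R(V'/t(R)V')$. A direct argument based on hereditariness shows $\text{Gen}(V/t(R)V)=\mathcal{T}\cap R/t(R)-\text{Mod}=\text{Gen}(V'/t(R)V')$, and taking the annihilator of this common class forces $\mathbf{a}=\mathbf{b}$. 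Step 3 of the proof of Theorem \ref{teor.hereditary case} produces a split surjection $(V'/t(R)V')^n\twoheadrightarrow R/\mathbf{b}$, using that both modules are annihilated by $\mathbf{b}$ and that $R/\mathbf{b}$ is projective over itself; the identical argument with $V$ in place of $V'$ exhibits $R/\mathbf{a}$ as a direct summand of $(V/t(R)V)^n$, providing the embedding required in (1).

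For $(1)\Rightarrow(2)$, I would verify conditions (a)--(d) of Corollary \ref{cor.progenerator sum-of-stalks hereditary} for $V$ and $Y=\mathbf{a}/t(R)$. Conditions (a) and (b) are immediate: (a) is the classical quasi-tilting hypothesis, and (b) holds because a finitely presented module over a left semihereditary ring has projective dimension at most one, so $\text{Ext}_R^2(V,?)$ vanishes on all of $R-\text{Mod}$. For the projectivity clause of (c), I would lift a finite $R/t(R)$-generating set of $\bar{\mathbf{a}}:=\mathbf{a}/t(R)$ to elements of $\mathbf{a}$, producing a finitely generated left ideal $I\subseteq\mathbf{a}$ of $R$ with $I+t(R)=\mathbf{a}$; semiheredity makes $I$ a direct summand of some $R^n$, and hereditariness gives $I\cap t(R)=t(I)$, so that $\bar{\mathbf{a}}\cong I/t(I)$ is a direct summand of $R^n/t(R)^n=(R/t(R))^n$ and hence a projective $R/t(R)$-module. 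The idempotence $\bar{\mathbf{a}}^{2}=\bar{\mathbf{a}}$ then forces $Y\in{}^{\perp}\mathcal{T}$: any $R$-linear map $\bar{\mathbf{a}}\to T$ with $T\in\mathcal{T}$ factors through $T':=\{t\in T:t(R)t=0\}$, which is a submodule of $T$ lying in $R/t(R)-\text{Mod}$, hence annihilated by $\mathbf{a}$, and the idempotence kills the map. A parallel idempotence calculation shows $tr_Y(F)=\bar{\mathbf{a}}F$ for every $F\in\mathcal{F}$, so the quotient $F/tr_Y(F)$ is an $R/\mathbf{a}$-module; it is here, and only here, that the embedding $R/\mathbf{a}\hookrightarrow(V/t(R)V)^n$ enters, for it places $R/\mathbf{a}$ inside $\mathcal{T}$ by hereditariness, whence every $R/\mathbf{a}$-module belongs to $\text{Gen}(V)=\mathcal{T}$ and condition (d) follows. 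The main obstacle is the projectivity in (c): left semiheredity of $R$ does not descend to $R/t(R)$, so one must exploit that $\bar{\mathbf{a}}$ is realised as the torsion-free quotient of a finitely generated ideal of $R$.
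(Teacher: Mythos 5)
Your proof is correct and follows the same route as the paper: both directions reduce to the criteria of Corollary \ref{cor.progenerator sum-of-stalks hereditary} with $Y=\mathbf{a}/t(R)$, using left semiheredity to get projective dimension at most one for $V$ (hence condition 2.b for free) and to realize $\bar{\mathbf{a}}$ as the torsionfree quotient of a finitely generated projective ideal, making it projective over $R/t(R)$. In fact you are slightly more careful than the paper in the direction $(2)\Rightarrow(1)$, where the paper silently writes "$V=H^0(G)$ for simplicity" and thereby replaces the $V$ of the statement; your explicit observation that $\mathbf{a}=\mathbf{b}$ because both are the annihilator of the common class $\mathcal{T}\cap R/t(R)\text{-Mod}=\text{Gen}(V/t(R)V)=\text{Gen}(V'/t(R)V')$ is precisely what legitimizes that replacement.
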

\begin{proof}
$1)\Longrightarrow 2)$ Put $\bar{M}=M/t(R)M$, for each $R$-module
$M$. Note that
$\mathcal{T}\cap\bar{R}-\text{Mod}=\text{Gen}(\bar{V})$ and that
$\text{ann}_{\bar{R}}(\bar{V})=\bar{\mathbf{a}}$. We then get that
$\text{Hom}_R(\bar{\mathbf{a}},T)=0$, for each $T\in\mathcal{T}$.
Indeed if $\xymatrix{f:\bar{\mathbf{a}} \ar[r]& T}$ is any
$R$-homomorphism, then
$\text{Im}(f)\in\mathcal{T}\cap\bar{R}-\text{Mod}$ and the induced
morphism $\xymatrix{\bar{f}:\bar{\mathbf{a}}\ar[r]&\text{Im}(f)}$ is
a morphism in $\bar{R}-\text{Mod}$ such that
$\bar{f}(\bar{\mathbf{a}})=\bar{f}(\bar{\mathbf{a}}^2)=\bar{\mathbf{a}}\text{Im}(f)=0$.
We then have that $\bar{\mathbf{a}}$ is in ${}^\perp\mathcal{T}$.

On the other hand, since  $\bar{\mathbf{a}}$ is finitely generated
on the left, we have a finitely generated left ideal $\mathbf{a}'$
of $R$ contained in $\mathbf{a}$ such that the canonical composition
$\xymatrix{\mathbf{a}' \hspace{0.05cm}\ar@{^(->}[r] &\mathbf{a}
\ar@{>>}[r]&\bar{\mathbf{a}}}$ is an epimorphism.  We then get that
$\frac{\mathbf{a}'}{t(\mathbf{a}')}=\frac{\mathbf{a}'}{\mathbf{a}'\cap
t(R)}\cong\bar{\mathbf{a}}$ and, since $\mathbf{a}'$ is projective,
we conclude that $\bar{\mathbf{a}}$ is a finitely generated
projective left $\bar{R}$-module.

Note now that $V$ and $Y:=\bar{\mathbf{a}}$ satisfy all conditions
2.a-c of corollary \ref{cor.progenerator sum-of-stalks hereditary}.
Moreover if $F\in\mathcal{F}$ then $F/\mathbf{a}F$ is generated by
$R/\mathbf{a}$ and, due to our hypotheses, we know that
$F/\mathbf{a}F$ is in $\mathcal{T}$,  so that also property 2.d of
that corollary holds. Then $\mathcal{H}_\mathbf{t}$ is a module
category, actually equivalent to $S-\text{Mod}$ (see proposition
\ref{prop.progenerator as sum of stalks}).

$2)\Longrightarrow 1)$ Let $G$
be a complex as in theorem \ref{teor.hereditary case}, which is then
a progenerator of $\mathcal{H}_\mathbf{t}$. The fact that
$\text{Im}(d)$ is projective easily implies that $G$ is isomorphic
to $H^0(G)[0]\oplus H^{-1}(G)[1]$ in $\mathcal{H}_\mathbf{t}$. Putting $V=H^0(G)$ and
$Y=H^{-1}(G)$ for simplicity, corollary \ref{cor.progenerator
sum-of-stalks hereditary} and its proof show that
$\mathbf{t}'=(\mathcal{T}\cap\bar{R}-\text{Mod},\mathcal{F})$ is the
right constituent torsion pair of a TTF triple in
$\bar{R}-\text{Mod}$ defined by the idempotent ideal
$\bar{\mathbf{a}}=\text{ann}_{\bar{R}}(\bar{V})$, which is finitely
generated on the left. Then we have
$\bar{\mathbf{a}}=\mathbf{a}/t(R)$, where
$\mathbf{a}=\text{ann}_R(V/t(R)V)$. Moreover, we have
$\frac{R}{\mathbf{a}}-\text{Mod}=\mathcal{T}\cap\bar{R}-\text{Mod}=\text{Gen}(\bar{V})$,
so that $R/\mathbf{a}\in\text{add}(\bar{V})$.
\end{proof}

\section{When the torsion class is closed under taking products}

Our next result shows that if the torsion class is closed under
taking products in $R-\text{Mod}$, then classical tilting theory
appears quite naturally.

\begin{teor} \label{teor.closed under products case}
Let $\mathbf{t}=(\mathcal{T},\mathcal{F})$ be a torsion pair in
$R-\text{Mod}$. The following assertions are equivalent:

\begin{enumerate}
\item $\mathcal{T}$ is closed under taking products in
$R-\text{Mod}$ and the heart $\mathcal{H}_\mathbf{t}$ is a module
category;
\item  $\mathcal{T}=\text{Gen}(V)$, where $V$ is a module which is
classical 1-tilting over $R/\text{ann}_R(V)$ and  admits a finitely
generated projective presentation
$\xymatrix{Q\ar[r]^{d}&P\ar[r]&V\ar[r]& 0}$ in $R-\text{Mod}$ and a
submodule $X\subseteq\text{Ker}(d)$ such that:

\begin{enumerate}
\item $\frac{Ker(d)}{X}\in\mathcal{F}$ and $\text{Ker}(d)\subseteq
X+\mathbf{a}Q$, where $\mathbf{a}:=\text{ann}_R(V)$;
\item $\text{Ext}_R^1(Q/X,?)$ vanishes on
$\mathcal{F}$;
\item There is a $R$-homomorphism $\xymatrix{h:(\frac{Q}{X})^{(I)}\ar[r]& R/t(R)}$, for some
set $I$, such that
$h((\frac{\text{Ker}(d)}{X})^{(I)})=\frac{\mathbf{a}+t(R)}{t(R)}$.
\end{enumerate}
\end{enumerate}
In this case
$\mathbf{t}'=(\text{Gen}(V),\mathcal{F}\cap\frac{R}{\mathbf{a}}-\text{Mod})$
is a classical tilting torsion pair in $R/\mathbf{a}-\text{Mod}$ and
the forgetful functor
$\mathcal{H}_{\mathbf{t}'}\longrightarrow\mathcal{H}_\mathbf{t}$ is
faithful.
\end{teor}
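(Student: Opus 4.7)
The key tool is Theorem \ref{prop.module heart of torsion pair}: $\mathcal{H}_\mathbf{t}$ is a module category precisely when there exists a complex in standard form $G=(0\to X\to Q\to P\to 0)$ satisfying the five standard conditions 1--5, in which case $G$ is a progenerator. My strategy is to show that, under the additional hypothesis that $\mathcal{T}$ is closed under products, this combinatorial data on $G$ corresponds verbatim to the tilting-theoretic data in assertion (2) with $V=H^{0}(G)$ and the same $X$, $Q$, $P$, $d$.

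\medskip

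\emph{Implication $(2)\Rightarrow(1)$.} I would first argue that $\text{Gen}(V)=R/\mathbf{a}\text{-Mod}$ as full subcategories of $R\text{-Mod}$: the inclusion $\text{Gen}(V)\subseteq R/\mathbf{a}\text{-Mod}$ holds because $\mathbf{a}V=0$, while $V$ being classical 1-tilting over $R/\mathbf{a}$ provides an exact sequence $0\to R/\mathbf{a}\to V^0\to V^1\to 0$ with $V^i\in\text{Add}(V)$, so $R/\mathbf{a}\in\text{Gen}(V)$ and the reverse inclusion holds too. The class $R/\mathbf{a}\text{-Mod}$ is obviously closed under products in $R\text{-Mod}$. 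I then verify the standard conditions for $G$: condition 1 follows from the 1-tilting property of $V$ over $R/\mathbf{a}$ (giving $\text{Gen}(V)=\text{Pres}(V)$ and $\text{Ext}^1$-vanishing on $\mathcal{T}$); condition 2 is part of the data; condition 3 is (a) once one observes that $\text{Rej}_{\mathcal{T}}(Q/X)=\mathbf{a}(Q/X)=(X+\mathbf{a}Q)/X$; condition 4 is (b); condition 5 follows from (c), since the cokernel of $h|_{H^{-1}(G)^{(I)}}$ becomes $R/(\mathbf{a}+t(R))$, which is annihilated by $\mathbf{a}$ and thus lies in $\mathcal{T}=\overline{\text{Gen}}(V)$. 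Theorem \ref{prop.module heart of torsion pair} now makes $G$ a progenerator of $\mathcal{H}_{\mathbf{t}}$.

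\medskip

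\emph{Implication $(1)\Rightarrow(2)$.} By Theorem \ref{prop.module heart of torsion pair} I fix a progenerator $G=(0\to X\to Q\to P\to 0)$ of $\mathcal{H}_\mathbf{t}$ in standard form and set $V=H^{0}(G)$. Lemma \ref{lem.description de t} yields that $V$ is finitely presented, classical quasi-tilting, and $\mathcal{T}=\text{Gen}(V)=\text{Pres}(V)$. The decisive new step is to exploit product-closure: setting $\mathbf{a}=\text{ann}_{R}(V)$, the evaluation map $R/\mathbf{a}\hookrightarrow V^{V}$, $r\mapsto(rv)_{v\in V}$, embeds $R/\mathbf{a}$ into $V^{V}\in\mathcal{T}$, so $R/\mathbf{a}\in\overline{\text{Gen}}(V)$ and hence $\overline{\text{Gen}}(V)=R/\mathbf{a}\text{-Mod}$. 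Combined with the quasi-tilting identity $\text{Gen}(V)=\overline{\text{Gen}}(V)\cap\text{Ker}(\text{Ext}_{R}^{1}(V,?))$, this exhibits $V$ as a 1-tilting object of $R/\mathbf{a}\text{-Mod}$; tensoring the presentation $Q\to P\to V\to 0$ by $R/\mathbf{a}$ shows $V$ is finitely presented over $R/\mathbf{a}$, so $V$ is classical 1-tilting there. Properties (a) and (b) are direct restatements of standard conditions 3 and 4 using $\text{Rej}_\mathcal{T}(Q/X)=\mathbf{a}(Q/X)$. For (c), standard condition 5 first furnishes an $h$ whose restriction image contains $(\mathbf{a}+t(R))/t(R)$ (because the cokernel lies in $\mathcal{T}$); conversely, standard condition 3 combined with two-sidedness of $\mathbf{a}$ forces $h(H^{-1}(G)^{(I)})\subseteq h(\mathbf{a}(Q/X)^{(I)})=\mathbf{a}\cdot h((Q/X)^{(I)})\subseteq \mathbf{a}\cdot R/t(R)=(\mathbf{a}+t(R))/t(R)$, yielding the required equality.

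\medskip

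The main obstacle is the identification $\overline{\text{Gen}}(V)=R/\mathbf{a}\text{-Mod}$, which is exactly where product-closure of $\mathcal{T}$ plays an indispensable role via the embedding $R/\mathbf{a}\hookrightarrow V^{V}$. The final ``in this case'' statement then follows because $V$, being classical 1-tilting over $R/\mathbf{a}$, defines the tilting torsion pair $(\text{Gen}(V),\text{Ker}(\text{Hom}_{R/\mathbf{a}}(V,?)))$ in $R/\mathbf{a}\text{-Mod}$, and for $M\in R/\mathbf{a}\text{-Mod}$ one has $\text{Hom}_{R}(V,M)=\text{Hom}_{R/\mathbf{a}}(V,M)$, so the torsionfree class coincides with $\mathcal{F}\cap R/\mathbf{a}\text{-Mod}$; faithfulness of the forgetful functor $\mathcal{H}_{\mathbf{t}'}\to\mathcal{H}_{\mathbf{t}}$ follows the same pattern as in Corollary \ref{cor.progenerator sum-of-stalks hereditary}, by comparing the $\text{Hom}_{\mathcal{D}(R/\mathbf{a})}$- and $\text{Hom}_{\mathcal{D}(R)}$-computations on the generating objects of the heart.
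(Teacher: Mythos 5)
Your overall route is the paper's own: reduce everything to Theorem \ref{prop.module heart of torsion pair}, use product-closure via the embedding $R/\mathbf{a}\hookrightarrow V^{V}$ to get $\overline{\text{Gen}}(V)=R/\mathbf{a}\text{-Mod}$, and translate the standard conditions 3--5 into 2.a--2.c through $\text{Rej}_{\mathcal{T}}(M)=\mathbf{a}M$. But two steps are genuinely gapped, and both stem from conflating $\text{Ext}^1_R$ with $\text{Ext}^1_{R/\mathbf{a}}$ and $\text{Gen}$ with $\overline{\text{Gen}}$. In $(1)\Rightarrow(2)$, the assertion that quasi-tiltingness of $V$ over $R$ together with $\overline{\text{Gen}}(V)=R/\mathbf{a}\text{-Mod}$ ``exhibits $V$ as a 1-tilting object of $R/\mathbf{a}\text{-Mod}$'' is not a formal consequence: the quasi-tilting identity tells you, for $T\in R/\mathbf{a}\text{-Mod}$, that $T\in\text{Gen}(V)$ iff $\text{Ext}^1_{R}(V,T)=0$, whereas 1-tilting over $R/\mathbf{a}$ requires $\text{Ext}^1_{R/\mathbf{a}}(V,T)=0\Rightarrow T\in\text{Gen}(V)$. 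Since $\text{Ext}^1_{R/\mathbf{a}}(V,T)$ is only a subgroup of $\text{Ext}^1_{R}(V,T)$ (the quotient injects into $\text{Hom}_R(\mathbf{a}\otimes_RV,T)$, which need not vanish, as $\mathbf{a}V=0$ does not force $\mathbf{a}\otimes_RV=0$), the implication you assert is precisely the nontrivial input that the paper imports from Lemma \ref{lem.description  de t} together with \cite[Proposition 3.2]{CMT}; as written, you have not proved it.

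In $(2)\Rightarrow(1)$ the claim $\text{Gen}(V)=R/\mathbf{a}\text{-Mod}$ is false in general: the sequence $0\to R/\mathbf{a}\to V^0\to V^1\to 0$ only puts $R/\mathbf{a}$ in $\overline{\text{Gen}}(V)$, and if $R/\mathbf{a}$ were $V$-generated then every $R/\mathbf{a}$-module would be, forcing $V$ to be projective over $R/\mathbf{a}$. Product-closure of $\mathcal{T}$ should instead be obtained, as in the paper, from $\mathcal{T}=\text{Gen}(V)=\{T\in R/\mathbf{a}\text{-Mod}:\ \text{Ext}^1_{R/\mathbf{a}}(V,T)=0\}$. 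The same slip reappears when you say the cokernel $R/(\mathbf{a}+t(R))$ ``lies in $\mathcal{T}=\overline{\text{Gen}}(V)$'': the equality $\mathcal{T}=\overline{\text{Gen}}(V)$ would mean $\mathbf{t}$ is hereditary, which is not assumed, and membership in $\mathcal{T}$ is neither needed nor justified; what standard condition 5 requires, and what is true, is only that this cokernel, being annihilated by $\mathbf{a}$, lies in $R/\mathbf{a}\text{-Mod}=\overline{\text{Gen}}(V)$. Finally, standard condition 1 is not a direct quotation of the 1-tilting property over $R/\mathbf{a}$: the $\text{Ext}^1$-vanishing must be lifted from $R/\mathbf{a}$ to $R$, which the paper does by observing that any $R$-extension $0\to T\to M\to V\to 0$ with $T\in\mathcal{T}$ has $M\in\mathcal{T}$ (extension-closure of the torsion class), hence lives in $R/\mathbf{a}\text{-Mod}$ and splits. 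With these repairs your argument coincides with the paper's proof.
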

\begin{proof}
$2)\Longrightarrow 1)$ The classical $1$-tilting condition of $V$
implies that $\mathcal{T}$ consists of the $R/\mathbf{a}$-modules
$T$ such that $\text{Ext}_{R/\mathbf{a}}^1(V,T)=0$. This class is
clearly closed under taking products. We next consider the complex

\begin{center}
$\xymatrix{G:= & \cdots \ar[r] & 0 \ar[r]& X
\hspace{0.1cm}\ar@{^(->}[r] & Q \ar[r]^{d} & P \ar[r] & 0 \ar[r] &
\cdots}$
\end{center}
concentrated in degrees $-2,-1,0$. By condition 2.a and by the
equality $\mathcal{T}=\text{Gen}(V)$, we have that
$G\in\mathcal{H}_\mathbf{t}$. We shall check that $G$ satisfies the standard conditions 1-5. %all
%properties 1-5 of theorem \ref{prop.module heart of torsion
%pair}. 
We inmediatly derive the standard conditions 2, 3 and 4. %properties 2, 3 and 4 of the mentioned
%proposition. 
On the other hand, our condition 2.c implies that
$\text{Coker}(h_{| (\frac{\text{Ker}(d)}{X})^{(I)}})$ is isomorphic
to $\frac{R}{\mathbf{a}+t(R)}$ and, hence, it is in
$\frac{R}{\mathbf{a}}-\text{Mod}$. But we have that
$\frac{R}{\mathbf{a}}-\text{Mod}=\overline{Gen}(V)$ since
$R/\mathbf{a}\in\overline{Gen}(V)$ due to the $1$-tilting condition
of $V$ over $R/\mathbf{a}$. Then the standard condition 5 %of theorem
%\ref{prop.module heart of torsion pair} 
holds. It remains to prove
that $\mathcal{T}\subseteq\text{Ker}(\text{Ext}_R^1(V,?))$. Let

\begin{center} $\xymatrix{0\ar[r] &
T\ar[r] & M \ar[r] & V \ar[r] &0}$
\end{center}
be any exact sequence in $R-\text{Mod}$, with $T\in\mathcal{T}$.
Since $\mathcal{T}=\text{Gen}(V)$ is closed under taking extensions
in $R-\text{Mod}$, we get that $M\in\mathcal{T}$ and, hence, the
exact sequence  lives in $R/\mathbf{a}-\text{Mod}$. But then it
splits since we have that
$\mathcal{T}=\text{Ker}(\text{Ext}_{R/\mathbf{a}}^1(V,?))$.

$1)\Longrightarrow 2)$ Let $G$ be a complex in standard form satisfying the standard conditions 1-5, which is then a  
%Let
%\begin{center}
%$\xymatrix{G:= & \cdots \ar[r] & 0 \ar[r]& X
%\hspace{0.1cm}\ar@{^(->}[r] & Q \ar[r]^{d} & P \ar[r] & 0 \ar[r] &
%\cdots}$\end{center} be a complex satisfying properties 1-5 of
%theorem \ref{prop.module heart of torsion pair}, which is then a
progenerator of $\mathcal{H}_\mathbf{t}$ (see theorem \ref{prop.module heart of torsion pair}),  and let us put
$V:=H^0(G)$ and $\mathbf{a}=\text{ann}_R(V)$. There is an obvious
monomorphism $\xymatrix{\frac{R}{\mathbf{a}} \ar[r] &V^V}$ and, by
hypothesis, we have that $V^V\in\mathcal{T}=\text{Gen}(V)$.  We then
get that $\overline{Gen}(V)=\frac{R}{\mathbf{a}}-\text{Mod}$.
Moreover, from lemma \ref{lem.description de t} and
\cite[Proposition 3.2]{CMT} we get that $V$ is a classical
$1$-tilting $R/\mathbf{a}$-module. On the other hand, the equality
$\overline{Gen}(V)=\frac{R}{\mathbf{a}}-\text{Mod}$ implies that
$\text{Rej}_\mathcal{T}(M)=\text{Rej}_{\frac{R}{\mathbf{a}}-\text{Mod}}(M)=\mathbf{a}M$.
Then conditions 2.a and 2.b follow directly.

It just remains to check condition 2.c. To do that, consider the
morphism $\xymatrix{h:(\frac{Q}{X})^{(I)} \ar[r] &\frac{R}{t(R)}}$
in the standard condition 5 %of theorem \ref{prop.module heart of torsion pair}
and put $h':=h_{| (\frac{\text{Ker}(d)}{X})^{(I)}}$. The fact that
$\text{Coker}(h')$ is in
$\overline{Gen}(V)=\frac{R}{\mathbf{a}}-\text{Mod}$ is equivalent to
saying that $\mathbf{a}\frac{R}{t(R)}\subseteq\text{Im}(h')$. But,
by the already proved condition 2.a, we know that
$\text{Im}(h')\subseteq
h(\mathbf{a}(\frac{Q}{X})^{(I)})=\mathbf{a}\text{Im}(h)\subseteq
\mathbf{a}\frac{R}{t(R)}$. We then get that
$\text{Im}(h')=\mathbf{a}\frac{R}{t(R)}=\frac{\mathbf{a}+t(R)}{t(R)}$.

Finally, it is clear that
$\mathbf{t}'=(\mathcal{T},\mathcal{F}\cap\frac{R}{\mathbf{a}}-\text{Mod})$
is a classical tilting torsion pair of $R/\mathbf{a}-\text{Mod}$. If
$\xymatrix{j:\mathcal{H}_{\mathbf{t'}}\ar[r]
&\mathcal{H}_{\mathbf{t}}}$ is the forgetful functor then, arguing
as in the final part of the proof of corollary \ref{cor.progenerator
sum-of-stalks hereditary}, in order to prove that $j$ is faithful,
we just need to check that the canonical map \linebreak
$\xymatrix{\text{Hom}_{\mathcal{H}_{\mathbf{t'}}}(V[0],M)\ar[r]
&\text{Hom}_{\mathcal{H}_{\mathbf{t}}}(V[0],M)}$ is injective, for
all $M\in\mathcal{H}_{\mathbf{t'}}$. Similar as there, this in turn
reduces to check that the canonical map

\begin{center}
$\xymatrix{\text{Ext}_{R/\mathbf{a}}^1(V,F)\cong\text{Hom}_{\mathcal{H}_{\mathbf{t'}}}(V[0],F[1])\ar[r]&
\text{Hom}_{\mathcal{H}_{\mathbf{t}}}(V[0],F[1])\cong\text{Ext}_{R}^1(V,F)}$
\end{center}
is injective, for all
$F\in\mathcal{F}\cap\frac{R}{\mathbf{a}}-\text{Mod}$. But this is
clear.
\end{proof}

Note that if $V$ is a non-projective classical 1-tilting $R$-module,
then  (see \cite{Mi})  $V$ is also a classical tilting right
$S$-module, where $S=\text{End}({}_RV)^{op}$, such that the
canonical algebra morphism $R\longrightarrow\text{End}(V_S)$ is an
isomorphism. Due to the tilting theorem, we then  know that
$(\text{Ker}(?\otimes_AV),\text{Ker}(\text{Tor}_1^A(?,V)))$ is a
torsion pair in $\text{Mod}-A$. If we had
$\text{Ker}(?\otimes_AV)=0$ we would have that
$\text{Tor}_1^R(?,V)=0$, and hence $V$ would be a flat left
$A$-module, which is a contradiction (see \cite[Corollaire 1.3]{L}).
Then there is a right $R$-module $X\neq 0$  such that
$X\otimes_AV=0\neq \text{Tor}_1^A(X,V)$. Considering an epimorphism
$X\twoheadrightarrow X'$, with $X'_A$ simple,  and replacing $X$ by
$X'$, we can even choose $X$ to be a simple right $A$-module.

 Recall that if $A$ is
a ring and $M$ is an $A$-bimodule, then the \emph{trivial extension
of $A$ by $M$}, denoted $A\rtimes M$, is the ring whose underlying
$A$-bimodule is $A\oplus M$ and the multiplication is given by
$(a,m)\cdot (a',m')=(aa',am'+ma')$.

We can now give a systematic way of constructing   negative answers
to question \ref{ques.classical tilting pair}.

\begin{teor} \label{teor.nontilting pair with stalk progenerator}
Let $A$ be a finite dimensional algebra over an algebraically closed
field $K$, let $V$ be a classical 1-tilting left $A$-module such
that $\text{Hom}_A(V,A)=0$, let $X$ be a simple right $A$-module
such that $X\otimes_AV=0$ and let us consider the trivial extension
$R=A\rtimes M$, where $M=V\otimes_KX$. Viewing $V$ as a left
$R$-module annihilated by $0\rtimes M$, the pair
$\mathbf{t}=(\text{Gen}(V),\text{Ker}(\text{Hom}_R(V,?)))$ is a
non-tilting torsion pair in $R-\text{Mod}$ such that $V[0]$ is a
progenerator of $\mathcal{H}_\mathbf{t}$.
\end{teor}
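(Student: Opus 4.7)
The strategy is to apply Corollary \ref{cor.progenerator stalk in 0} by verifying its three conditions 3(a)--3(c). Viewing $A\text{-Mod}$ as the full subcategory of $R\text{-Mod}$ consisting of $M$-annihilated modules, $V$ is finitely presented over $R$ (as $R$ is finite-dimensional over $K$), and $MV=0$ forces $\text{Gen}_R(V)=\text{Gen}_A(V)$ and similarly for $\text{Pres}$; hence $\text{Pres}_R(V)=\text{Gen}_R(V)$ holds by the $1$-tilting hypothesis on ${}_AV$. The torsionfree class is identified as follows: for any $F\in\mathcal{F}:=\text{Ker}(\text{Hom}_R(V,?))$, the multiplication map exhibits $MF$ as a quotient of $M\otimes_A F=V\otimes_K(X\otimes_A F)\in\text{Gen}_A(V)=\mathcal{T}$, while $MF$ embeds in the $M$-annihilated $R$-submodule $(0:_F M)$, whose $\text{Hom}_A(V,-)$ equals $\text{Hom}_R(V,(0:_F M))\subseteq\text{Hom}_R(V,F)=0$; so $MF\in\mathcal{T}\cap\mathcal{F}_A=0$, proving $\mathcal{F}=\mathcal{F}_A$.

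For 3(a) (the $\text{Ext}^1$-vanishing), any $R$-extension $0\to T\to E\to V\to 0$ with $T\in\mathcal{T}$ satisfies $ME\subseteq T$ (since $MV=0$), and the assignment $m\otimes v\mapsto m\tilde{v}$, for any lift $\tilde{v}\in E$ of $v$, gives a well-defined $A$-bilinear surjection $M\otimes_A V\twoheadrightarrow ME$; but $M\otimes_A V=V\otimes_K(X\otimes_A V)=0$ by hypothesis, so $ME=0$, the sequence lives in $A\text{-Mod}$, and splits by the $1$-tilting property of ${}_AV$. Condition 3(c) is immediate from $\overline{\text{Gen}}_A(V)=A\text{-Mod}$ (again because $V$ is $1$-tilting over $A$): every $F\in\mathcal{F}=\mathcal{F}_A$ embeds in some object of $\text{Gen}_A(V)=\mathcal{T}$.

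The main obstacle is 3(b): $\text{Ext}^2_R(V,F)=0$ for every $F\in\mathcal{F}$. My plan is to construct the first two terms of an $R$-projective resolution of $V$ by lifting an $A$-resolution. Pick a short exact sequence $0\to P_1\to P_0\to V\to 0$ of finitely generated projective $A$-modules (possible since $\text{pd}_A V\leq 1$). Since $R\otimes_A A^n=R^n$, each $R\otimes_A P_i$ is a finitely generated projective $R$-module, and $R\otimes_A V=V\oplus M\otimes_A V=V$. The kernel of the induced map $R\otimes_A P_1\to R\otimes_A P_0$ is $\text{Tor}_1^A(R,V)=\text{Tor}_1^A(M,V)\cong V\otimes_K\text{Tor}_1^A(X,V)\cong V^{d_1}$, with $d_1=\dim_K\text{Tor}_1^A(X,V)$, producing an exact sequence of $R$-modules
\[
0\to V^{d_1}\to R\otimes_A P_1\to R\otimes_A P_0\to V\to 0.
\]
Setting $\Omega V=\ker(R\otimes_A P_0\to V)$, dimension shifting yields $\text{Ext}^2_R(V,F)=\text{Ext}^1_R(\Omega V,F)$, and applying $\text{Hom}_R(-,F)$ to $0\to V^{d_1}\to R\otimes_A P_1\to\Omega V\to 0$ exhibits $\text{Ext}^1_R(\Omega V,F)$ as a quotient of $\text{Hom}_R(V,F)^{d_1}=0$.

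Finally, $\mathbf{t}$ is non-tilting: were it the tilting torsion pair of some $1$-tilting $R$-module $V'$, we would have $V'\in\mathcal{T}\subseteq A\text{-Mod}$, and the defining exact sequence $0\to R\to T^0\to T^1\to 0$ with $T^0\in\text{Add}(V')\subseteq A\text{-Mod}$ would force $MR=0$, contradicting $M\neq 0$.
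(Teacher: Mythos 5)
Your proof is correct, and it takes a cleaner abstract route than the paper while sharing the same technical core. The paper's proof establishes a list of facts i)--v) (among them: $0\rtimes M$ is the trace of $V$ in $R$, $\text{Gen}(V)$ is closed under extensions, and $\text{Ker}(1\otimes d')\cong\text{Tor}_1^A(M,V)\cong V\otimes_K\text{Tor}_1^A(X,V)$ is a nonzero non-projective module in $\text{add}({}_RV)$) and then feeds these into Theorem \ref{teor.closed under products case}, verifying its conditions 2.a--2.c with $X=\text{Ker}(d)$ and $h=0$. You instead verify conditions 3(a)--3(c) of Corollary \ref{cor.progenerator stalk in 0} directly, which is the more economical target since the theorem's conclusion is precisely that $V[0]$ is a stalk progenerator. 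The decisive computation is identical in both: $\text{Tor}_1^A(R,V)\cong\text{Tor}_1^A(M,V)\cong V\otimes_K\text{Tor}_1^A(X,V)\in\text{add}({}_RV)\subseteq\mathcal{T}$, combined with $M\otimes_A V=0$; you package it as a dimension shift to kill $\text{Ext}^2_R(V,-)$ on $\mathcal{F}$, while the paper packages it as $\text{Ker}(d)\in\mathcal{T}$ to verify condition 2.b of Theorem \ref{teor.closed under products case}. Your identification $\mathcal{F}=\mathcal{F}_A$ via the torsion/torsionfree disjunction on $MF$ is also a slightly different (but valid) argument from the paper's use of the trace identity $t(R)=0\rtimes M$. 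Finally, the non-tilting conclusion is argued differently: the paper observes that $pd({}_RV)>1$ and cites Colpi, whereas you give a direct argument from the defining exact sequence $0\to R\to T^0\to T^1\to 0$ of a 1-tilting module (a candidate $V'$ would satisfy $\text{Add}(V')\subseteq A\text{-Mod}$, forcing $M=M\cdot 1\subseteq MT^0=0$); your version is more elementary and self-contained.
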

\begin{proof}
All throughout the proof, for any two-sided ideal $\mathbf{a}$ of a
ring $R$, we view $R/\mathbf{a}$-modules as $R$-modules annihilated
by $\mathbf{a}$. Note that if $M$ is any such module and we apply
$?\otimes _RM:\text{Mod}-R\longrightarrow \text{Ab}$ to the
canonical  exact sequence $0\rightarrow\mathbf{a}\hookrightarrow
R\longrightarrow R/\mathbf{a}\rightarrow 0$, then we get an
isomorphism
$R\otimes_RM\stackrel{\cong}{\longrightarrow}\frac{R}{\mathbf{a}}\otimes_RM$,
which implies that the canonical morphism
$\text{Tor}_1^R(\frac{R}{\mathbf{a}},M)\longrightarrow\mathbf{a}\otimes_RM$
is an isomorphism.

 Bearing in mind that $X$ is simple in $\text{Mod}-A$, if $0\rightarrow Q'\stackrel{u}{\longrightarrow}
P'\longrightarrow V\rightarrow 0$ is the minimal projective
resolution of ${}_AV$, then the map $1_X\otimes
u:X\otimes_AQ'\longrightarrow X\otimes_AP'$ is the zero map. This
implies  that we have isomorphisms of vector spaces
$X\otimes_AP'\cong X\otimes_AV=0$ and $\text{Tor}_1^A(X,V)\cong
X\otimes_AQ'$.

Note that we have an isomorphism ${}_AM\cong {}_AV$ in
$A-\text{Mod}$ because, due to the algebraically closed condition of
$K$, the simple right $A$-module $X$ is one-dimensional over $K$. As
a right $A$-module, $M_A$ is in $\text{add}(X_A)$. Moreover, we have
$M\otimes_AV\cong V\otimes_KX\otimes_AV=0$.   We shall prove the
following facts:

\begin{enumerate}
\item[i)] $\text{ann}_R(V)=0\rtimes M=:\mathbf{a}$ and this ideal is the trace of $V$ in $R$;
\item[ii)]
$\mathbf{a}\otimes_RV=0$;
\item[iii)]  $\mathcal{T}:=\text{Gen}(V)$  is
closed under taking extensions, and hence a torsion class,  in
$R-\text{Mod}$;
\item[iv)] $\text{Ker}(\text{Hom}_A
(V,?))=\text{Ker}(\text{Hom}_R(V,?))$ and, hence,
$\mathbf{t}:=(\text{Gen}(V),\text{Ker}(\text{Hom}_R(V,?)))$ is a
torsion pair in $R-\text{Mod}$ which lives in $A-\text{Mod}$;
\item[v)] There is a finitely generated projective presentation of ${}_RV$

\begin{center}
$Q\stackrel{d}{\longrightarrow}P\twoheadrightarrow V\rightarrow 0$
\end{center}
such that $\text{Ker}(d)$ is a non-projective $R$-module in
$\mathcal{T}$.
\end{enumerate}

Suppose that all these facts have been proved. Then $\mathbf{t}$ is
a torsion pair in $R-\text{Mod}$ whose torsion class is closed under
taking products. We claim that  $V$ satisfies all conditions of
assertion 2 in theorem \ref{teor.closed under products case}, by
taking $X=\text{Ker}(d)$. The only nontrivial things to check are
conditions 2.b and 2.c in that assertion. Condition 2.b follows by
applying the exact sequence of $\text{Ext}_R(?,F)$, with
$F\in\mathcal{F}$, to the short exact sequence
$0\rightarrow\text{Ker}(d)\hookrightarrow
Q\stackrel{d}{\longrightarrow}\text{Imd}(d)\rightarrow 0$. On the
other hand, $t(R)$ is the trace of $V$ in $R$ and, by fact i), we
know that $t(R)=\mathbf{a}$. Then condition 2.c of assertion 2 in
theorem \ref{teor.closed under products case} also holds, simply by
taking as $h$ the zero map. Looking at the proof of implication
$2)\Longrightarrow 1)$ in that theorem, we see that the complex

\begin{center}
$G:=\cdots \longrightarrow
0\longrightarrow\text{Ker}(d)\hookrightarrow
Q\stackrel{d}{\longrightarrow}P\longrightarrow 0 \longrightarrow
\cdots$,
\end{center}
concentrated in degrees $-2,-1,0$, is a progenerator of
$\mathcal{H}_\mathbf{t}$. But we have an isomorphism $G\cong V[0]$
in $\mathcal{H}_\mathbf{t}$. Therefore $V[0]$ is a progenerator of
$\mathcal{H}_\mathbf{t}$. Note that this torsion pair $\mathbf{t}$
in $R-\text{Mod}$ is not  tilting,  because the projective dimension
of ${}_RV$ is $>1$ (see \cite[Section 2]{C}).

We now pass to prove the facts i)-v) in the list above:

\vspace*{0.3cm}

i) By definition of the $R$-module structure on $V$, we have
$(a,m)v=av$, for each $(a,m)\in A\rtimes M$ and each $v\in V$. Then
$\text{ann}_R(V)=\text{ann}_A(V)\rtimes M$.  But $\text{ann}_A(V)=0$
due to the $1$-tilting condition of ${}_AV$. On the other hand,  if
$f:V\longrightarrow R$ is a morphism in $R-\text{Mod}$, then we have
two $K$-linear maps $g:V\longrightarrow A$ and $h:V\longrightarrow
M$ such that $f(v)=(g(v),h(v))\in A\rtimes M=R$, for all $v\in V$.
Direct computation shows that $g$ is a morphism in $A-\text{Mod}$,
and hence $g=0$. Then one immediately sees that
$h\in\text{Hom}_A(V,M)$ and, since $_{A}M$ is in
$\text{Gen}({}_AV)$, we conclude that $t(R)=0\rtimes M=\mathbf{a}$.

\vspace*{0.3cm}

ii) Since $\mathbf{a}^2=0$ and $\mathbf{a}V=0$, we have an equality
$\mathbf{a}\otimes_RV=\mathbf{a}\otimes_{\frac{R}{\mathbf{a}}}V=\mathbf{a}\otimes_AV$.
But, as a right $A$-module,  we have that $\mathbf{a}_A\cong M_A$.
It follows that $\mathbf{a}\otimes_AV\cong M\otimes_AV=0$.

\vspace*{0.3cm}

iii)  Consider  any exact sequence $0\rightarrow T\longrightarrow
N\longrightarrow T'\rightarrow 0$ (*) in $R-\text{Mod}$, with
$T,T'\in\text{Gen}(V)=\mathcal{T}$. Taking an epimorphism
$p:V^{(I)}\twoheadrightarrow T'$ and taking the image of the last
exact sequence by the morphism
$\text{Ext}_R^1(p,T):\text{Ext}_R^1(T',T)\longrightarrow\text{Ext}_R^1(V^{(I)},T)$,
we easily reduce the problem to the case when $T'=V^{(I)}$.  We now
apply  the functor
$\frac{R}{\mathbf{a}}\otimes_R?:R-\text{Mod}\longrightarrow
\frac{R}{\mathbf{a}}-\text{Mod}$ to the sequence (*), with
$T'=V^{(I)}$, and use the fact that, by the initial paragraph of
this proof, we have
$\text{Tor}_1^R(R/\mathbf{a},V^{(I)})\cong\mathbf{a}\otimes_RV^{(I)}=0$.
We then get a commutative diagram with exact rows, where the
vertical arrows are the canonical maps:

$$\xymatrix{0 \ar[r] & T \ar[r] \ar[d]^{\wr} & N \ar[r] \ar[d] & V^{(I)} \ar[r] \ar[d]^{\wr} & 0  \\ 0 \ar[r] & \frac{R}{\mathbf{a}}\otimes_R T \ar[r] & \frac{R}{\mathbf{a}}\otimes_R N \ar[r] & \frac{R}{\mathbf{a}}\otimes_R V^{(I)} \ar[r] & 0 }$$

The central vertical arrow $N\longrightarrow
\frac{R}{\mathbf{a}}\otimes_RN\cong\frac{N}{\mathbf{a}N}$ is then an
isomorphism. This implies that $\mathbf{a}N=0$ and, hence, the
sequence (*) above lives in $A-\text{Mod}$ and $N\in\mathcal{T}$.

\vspace*{0.3cm}

iv) If $F\in\text{Ker}(\text{Hom}_R(V,?))$ then $t(R)F=tr_V(R)F=0$.
By fact i),  we get that $\mathbf{a}F=0$. Then $F$ is an $A$-module,
and hence
$\text{Ker}(\text{Hom}_R(V,?))\subseteq\text{Ker}(\text{Hom}_A(V,?))$.
The converse inclusion is obvious.

\vspace*{0.3cm}

v) The multiplication map $\mu :R\otimes_AV\longrightarrow V$ is
surjective. Moreover, we have an isomorphism of $K$-vector spaces

\begin{center}
$R\otimes_AV\cong (A\oplus M)\otimes_AV\cong V\oplus
(M\otimes_AV)=V\oplus 0\cong V$.
\end{center}
Since $V$ is a finite dimensional $K$-vector space we get that $\mu$
is an isomorphism of left $R$-modules.

Let $0\rightarrow Q'\stackrel{d'}{\longrightarrow}P'\longrightarrow
V\rightarrow 0$ be a finitely generated projective presentation of
$V$ in $A-\text{Mod}$. Using the previous paragraph, we then get a
finitely generated projective presentation of $V$ in $R-\text{Mod}$:

\begin{center}
$R\otimes_AQ'\stackrel{1\otimes d'}{\longrightarrow}
R\otimes_AP'\longrightarrow V\rightarrow 0$.
\end{center}
Then we have isomorphisms of $K$-vector spaces

\begin{center}
 $\text{Ker}(1\otimes
d')=\text{Tor}_1^A(R,V)\cong \text{Tor}_1^A(A\oplus
M,V)\cong\text{Tor}_1^A(M,V)$.
\end{center}
It is easy to deduce from this that $\mathbf{a}\text{Ker}(1\otimes
d')=0$. Then we can view $\text{Ker}(1\otimes d')$ as a left
$A$-module isomorphic to $\text{Tor}_1^A(M,V)$. Since
$M=V\otimes_KX$ we get that $\text{Tor}_1^A(M,V)\cong
V\otimes_K\text{Tor}_1^A(X,V)$ which is nonzero and isomorphic to
$V\otimes_K (X\otimes_AQ')\in\text{add}({}_AV)$ due to the first
paragraph of this proof.   It follows that $\text{Ker}(1\otimes d')$
is a nonzero left $R$-module in
$\text{add}({}_RV)=\text{add}({}_AV)$.

We finally prove that $W:=\text{Ker}(1\otimes d')$ is not a
projective in $R-\text{Mod}$. If it were so, we would have that
$W=tr_V(R)W$. By fact i), we would get that $\mathbf{a}W=W$, which
would imply that $W=0$ since $\mathbf{a}^2=0$.
\end{proof}

\section{Torsion pairs which are right constituents of TTF triples}

As shown in theorem \ref{teor.hereditary case} and corollaries
\ref{cor.faithful hereditary pair} and \ref{cor.progenerator
sum-of-stalks hereditary}, hereditary torsion pairs which are the
right constituent of a TTF triple appear quite naturally when
studying the modular condition of the heart. In this section we fix
an idempotent ideal $\mathbf{a}$ of $R$ and its associated TTF
triple $(\mathcal{C},\mathcal{T},\mathcal{F})$ and want to study
when the pair $\mathbf{t}=(\mathcal{T},\mathcal{F})$ has the
property that its heart $\mathcal{H}_\mathbf{t}$ is a module
category. When this is the case, by theorem \ref{teor.hereditary
case}, we know that $\mathbf{a}$ is finitely generated on the left.

 Our next result, very important in the sequel,
shows that the conditions for $\mathcal{H}_\mathbf{t}$ to be a
module category get rather simplified if we assume that the monoid
morphism $\xymatrix{V(R)\ar[r] & V(R/\mathbf{a})}$ (see section
\ref{sec.Terminology}) is an epimorphism.

\begin{teor} \label{teor.modular H with easy conditions}
Let $\mathbf{a}$ be an idempotent  ideal of the ring $R$ which is
finitely generated on the left, and let
$(\mathcal{C},\mathcal{T},\mathcal{F})$ be the associated TTF
triple. Consider the following assertions for
$\mathbf{t}=(\mathcal{T},\mathcal{F})$:

\begin{enumerate}
\item There is a finitely generated projective $R$-module $P$ satisfying the
following conditions:

\begin{enumerate}
\item $P/\mathbf{a}P$ is a (pro)generator of $R/\mathbf{a}-\text{Mod}$;
\item There is an exact sequence $\xymatrix{0\ar[r] & F\ar[r] & C\ar[r]^{q}&\mathbf{a}P\ar[r] & 0}$ in $R-\text{Mod}$,
where  $F\in\mathcal{F}$ and
 $C$ is a finitely generated module which is in $\mathcal{C}\cap
\text{Ker}(\text{Ext}_R^1(?,\mathcal{F}))$ and generates
$\mathcal{C}\cap\mathcal{F}$.
\end{enumerate}
\item The heart $\mathcal{H}_\mathbf{t}$ is a module category.
\end{enumerate}
Then 1) implies 2) and, in such a case, if $\xymatrix{j:\mathbf{a}P
\hspace{0.07cm}\ar@{^(->}[r]& P}$ is the inclusion, then the complex
concentrated in degrees $-1,0$

\begin{center}
$\xymatrix{G':= \cdots \ar[r] &  0 \ar[r] &
C\oplus\frac{C}{t(C)}\ar[rr]^{\hspace{0.3cm}\begin{pmatrix}j\hspace{0.02cm}q
& 0
\end{pmatrix}} &&P \ar[r] &  0 \ar[r] &  \cdots}$
\end{center}
is a progenerator of $\mathcal{H}_\mathbf{t}$.

When the monoid morphism $\xymatrix{V(R)\ar[r]& V(R/\mathbf{a})}$ is
surjective, the implication $2)\Longrightarrow 1)$ is also true.
\end{teor}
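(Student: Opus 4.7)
My approach is to verify the progenerator conditions for $G'$ directly in the implication $1 \Rightarrow 2$, exploiting the decomposition $G' \cong G'' \oplus (C/t(C))[1]$ in $\mathcal{C}(R)$, where $G''$ denotes the two-term complex $C \xrightarrow{j\circ q} P$ in degrees $-1,0$. Direct computation yields $H^0(G') = P/\mathbf{a}P \in \mathcal{T}$ and $H^{-1}(G') = F \oplus C/t(C) \in \mathcal{F}$, placing $G'$ in $\mathcal{H}_\mathbf{t}$. Projectivity of each summand reduces to the vanishing of $\text{Ext}^1_{\mathcal{H}_\mathbf{t}}(-, T[0])$ and $\text{Ext}^1_{\mathcal{H}_\mathbf{t}}(-, F'[1])$ on stalks with $T \in \mathcal{T}$, $F' \in \mathcal{F}$. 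For $(C/t(C))[1]$, this uses $C/t(C) \in \mathcal{C} = {}^\perp\mathcal{T}$ and, combined with the hypothesis $\text{Ext}^1_R(C,\mathcal{F}) = 0$ applied to $0 \to t(C) \to C \to C/t(C) \to 0$, yields $\text{Ext}^1_R(C/t(C),\mathcal{F}) = 0$. For $G''$, applying $\text{Hom}_{\mathcal{D}(R)}(-, T[1])$ and $\text{Hom}_{\mathcal{D}(R)}(-, F'[2])$ to the canonical triangle $P[0] \to G'' \to C[1] \xrightarrow{+}$ reduces the vanishing to $\text{Hom}_R(C,\mathcal{T}) = 0$, to the projectivity of $P$, and to $\text{Ext}^1_R(C,\mathcal{F}) = 0$. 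Compactness of $G'$ in $\mathcal{H}_\mathbf{t}$ then follows from the finite generation of $P$ and $C$.

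For the generator property of $G'$, I would show every stalk of the form $T[0]$ ($T \in \mathcal{T}$) or $F'[1]$ ($F' \in \mathcal{F}$) lies in $\text{Gen}_{\mathcal{H}_\mathbf{t}}(G')$. The canonical epimorphism $G'' \twoheadrightarrow (P/\mathbf{a}P)[0]$, combined with the progenerator property of $P/\mathbf{a}P$ in $R/\mathbf{a}$-Mod, handles the first case. For the second, the sequence $0 \to \mathbf{a}F' \to F' \to F'/\mathbf{a}F' \to 0$ in $R$-Mod has $\mathbf{a}F' \in \mathcal{C} \cap \mathcal{F}$ (since $\mathbf{a}^2 = \mathbf{a}$ and $\mathcal{F}$ is closed under subobjects) and $F'/\mathbf{a}F' \in \mathcal{T}$; rotating the corresponding triangle yields the short exact sequence
\[
0 \longrightarrow (F'/\mathbf{a}F')[0] \longrightarrow \mathbf{a}F'[1] \longrightarrow F'[1] \longrightarrow 0
\]
in $\mathcal{H}_\mathbf{t}$. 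Since $C$ generates $\mathcal{C} \cap \mathcal{F}$ and any $R$-map $C \to \mathbf{a}F'$ kills $t(C)$, $C/t(C)$ also generates $\mathcal{C} \cap \mathcal{F}$, giving an epimorphism $(C/t(C))^{(I)} \twoheadrightarrow \mathbf{a}F'$ whose kernel is in $\mathcal{F}$; this lifts to an epimorphism in $\mathcal{H}_\mathbf{t}$, so $\mathbf{a}F'[1]$, and hence $F'[1]$, lie in $\text{Gen}_{\mathcal{H}_\mathbf{t}}(G')$.

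For $2 \Rightarrow 1$ under surjectivity of $V(R) \to V(R/\mathbf{a})$, Theorem \ref{prop.module heart of torsion pair} provides a progenerator $G_0$ of $\mathcal{H}_\mathbf{t}$ in standard form. By Lemma \ref{lem.description de t}, $V := H^0(G_0)$ is finitely presented with $\mathcal{T} = \text{Pres}(V) \subseteq \text{Ker}(\text{Ext}^1_R(V,-))$; since $\mathcal{T} = R/\mathbf{a}$-Mod and $\text{Ext}^1_{R/\mathbf{a}}(V,-) \hookrightarrow \text{Ext}^1_R(V,-)$, $V$ is a finitely presented progenerator of $R/\mathbf{a}$-Mod. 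The monoid surjectivity lifts $[V] \in V(R/\mathbf{a})$ to a class $[P] \in V(R)$, producing a finitely generated projective $R$-module $P$ with $P/\mathbf{a}P \cong V$, which establishes (1.a). The principal obstacle is (1.b): one picks a finitely generated projective $Q$ with an epimorphism $Q \twoheadrightarrow \mathbf{a}P$ (possible because $V$ finitely presented implies $\mathbf{a}P$ finitely generated), composing with $\mathbf{a}P \hookrightarrow P$ to obtain a projective presentation $d: Q \to P$ of $V$. A natural candidate is $C := \mathbf{a}Q \in \mathcal{C}$, for which $d(\mathbf{a}Q) = \mathbf{a}\cdot\text{Im}(d) = \mathbf{a}^2 P = \mathbf{a}P$. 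The hard verification is that $C$, possibly after a Schanuel-type adjustment reconciling this new presentation with the one coming from $G_0$, inherits $\text{Ext}^1_R(C,\mathcal{F}) = 0$, has the kernel of $C \twoheadrightarrow \mathbf{a}P$ in $\mathcal{F}$, and generates $\mathcal{C} \cap \mathcal{F}$; these properties must be extracted by transporting the standard conditions 3--5 satisfied by $G_0$ across the identification $V \cong P/\mathbf{a}P$ afforded by the surjectivity hypothesis.
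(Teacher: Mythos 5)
Your argument for $1)\Longrightarrow 2)$ is correct and takes a genuinely different route from the paper's. The paper first replaces $G'$ by a quasi-isomorphic three-term complex in standard form (using projective covers of $C$ and $C/t(C)$) and then appeals to the characterization in conditions 3.a--3.d of Theorem \ref{teor.hereditary case}, whereas you exploit the splitting $G'\cong G''\oplus (C/t(C))[1]$ in $\mathcal{C}(R)$, with $G''$ the two-term complex $C\xrightarrow{jq}P$, and verify projectivity, compactness and the generator property directly against the stalk decomposition of objects in $\mathcal{H}_\mathbf{t}$. Your reduction of projectivity of $G''$ to $\text{Hom}_R(C,\mathcal{T})=0$, projectivity of $P$, and $\text{Ext}^1_R(C,\mathcal{F})=0$ via the triangle $P[0]\to G''\to C[1]\xrightarrow{+}$ is clean; the derivation of $\text{Ext}^1_R(C/t(C),?)_{|\mathcal{F}}=0$ from $\text{Ext}^1_R(C,?)_{|\mathcal{F}}=0$ coincides with the paper's computation; and your generator argument via $0\to (F'/\mathbf{a}F')[0]\to \mathbf{a}F'[1]\to F'[1]\to 0$ and the fact that $C/t(C)$ generates $\mathcal{C}\cap\mathcal{F}$ is a valid alternative to the condition-3.d check. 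This decomposition-based approach is arguably more transparent than routing through Theorem \ref{teor.hereditary case}, at the cost of repeating some boilerplate about AB4 and stalk exact sequences.

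For $2)\Longrightarrow 1)$ there is a genuine gap, which you acknowledge but do not close. The lift of $(1.a)$ via the monoid surjectivity is fine, but your candidate $C:=\mathbf{a}Q$ for an arbitrary finitely generated projective cover $Q\twoheadrightarrow\mathbf{a}P$ is the wrong object: the kernel of $\mathbf{a}Q\to\mathbf{a}P$, namely $\text{Ker}(d)\cap\mathbf{a}Q$, has no reason to lie in $\mathcal{F}$, and $\text{Ext}^1_R(\mathbf{a}Q,\mathcal{F})=0$ is not forthcoming. The paper instead rebuilds the progenerator in standard form $X\hookrightarrow Q\xrightarrow{d}P$ with the lifted $P$ in degree $0$ and takes $C:=Q/X$; then the standard conditions (Theorem \ref{prop.module heart of torsion pair} together with Theorem \ref{teor.hereditary case}(3)) deliver exactly what $(1.b)$ demands: $H^{-1}(G)=\text{Ker}(d)/X\in\mathcal{F}$ is the kernel, $Q=X+\mathbf{a}Q$ (from $\text{Ker}(d)\subseteq X+\mathbf{a}Q$ and $\mathbf{a}P=d(\mathbf{a}Q)$) gives $Q/X\in\mathcal{C}$, condition 3.c gives $\text{Ext}^1_R(Q/X,\mathcal{F})=0$, and condition 3.d (the existence of $h:(Q/X)^{(J)}\to\mathbf{a}/t(\mathbf{a})$ with $h_{|H^{-1}(G)^{(J)}}$ epi) shows $Q/X$ generates $\mathbf{a}/t(\mathbf{a})$ and hence all of $\mathcal{C}\cap\mathcal{F}$. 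Your ``Schanuel-type adjustment'' hint points roughly in the right direction, but the actual work --- transporting the standard conditions to the new presentation and verifying all three properties of $C$ --- is precisely what is missing from the proposal.
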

\begin{proof}
$1)\Longrightarrow 2)$ Fix an exact sequence $\xymatrix{0\ar[r]
&F\ar[r]&C\ar[r]^{q} &\mathbf{a}P\ar[r]& 0}$ as indicated in
condition 1.b. Taking $F'\in\mathcal{F}$ arbitrary,  applying the
exact sequence of $\text{Ext}_{R}(?,F')$ to the sequence \linebreak
$\xymatrix{0\ar[r]& t(C)\hspace{0.07cm}
\ar@{^(->}[r]&C\ar@{>>}[r]^{pr. \hspace{0.35cm}}&C/t(C)\ar[r]&0}$
and using condition 1.b, we get that
$\text{Ext}_R^1(C/t(C),?)_{|\mathcal{F}}=0$. But then any
epimorphism $\xymatrix{(R/t(R))^{n}\ar@{>>}[r] &
 \frac{C}{t(C)}}$ splits, which implies that
 $U:=C/t(C)$ is a finitely generated projective $R/t(R)$-module which is in
 $\mathcal{C}$. Moreover it generates
 $\mathcal{C}\cap\mathcal{F}$ since so does $C$.

Let $\xymatrix{\pi_U:Q_U\ar@{>>}[r]& U}$ and
$\xymatrix{\pi_C:Q_C\ar@{>>}[r]&C}$ be two epimorphisms from
finitely generated projective modules, whose respective kernels are
denoted by $K_U$ and $K_C$. We will prove that the following complex in standard form,  which is clearly
quasi-isomorphic to $G'$, is a progenerator of
$\mathcal{H}_\mathbf{t}$.

\begin{center}
$G:$ \hspace*{1cm} $\xymatrix{\cdots \ar[r] & 0 \ar[r] & K_U\oplus K_C \hspace{0.08cm}
\ar@{^(->}[r] &Q_U\oplus Q_C \ar[rr]^{\hspace{0.5cm}\begin{pmatrix}0
& jq\pi_C\end{pmatrix}}&&P \ar[r] & 0
 \ar[r] & \cdots.}$
\end{center}

We have that $H^0(G)=P/\mathbf{a}P\in\mathcal{T}$ and, by an
appropriate use of ker-coker lemma, we get an exact sequence
$\xymatrix{0\ar[r]& K_C
\hspace{0.07cm}\ar@{^(->}[r]&\text{Ker}(q\pi_C)\ar[r]& F\ar[r]&0}$.
It then follows that

\begin{center}
$H^{-1}(G)=\frac{\text{Ker}(\begin{pmatrix}0 &
jq\pi_C\end{pmatrix})}{K_U\oplus
K_C}=\frac{Q_U\oplus\text{Ker}(q\pi_C)}{K_U\oplus K_C}\cong U\oplus
F$,
\end{center}
which is in $\mathcal{F}$.  This proves that $G$ is an object of
$\mathcal{H}_\mathbf{t}$. We next check all conditions 3.a-d of
theorem \ref{teor.hereditary case}. Clearly, condition 3.a in that
theorem is a consequence of our condition 1.a.  Note next that
$(K_U\oplus K_C)+\mathbf{a}(Q_U\oplus Q_C)=Q_U\oplus Q_C$ because
$U\cong\frac{Q_U}{K_U}$ and $C\cong\frac{Q_C}{K_C}$ are both in
$\mathcal{C}=\{X\in R-\text{Mod}:$ $\mathbf{a}X=X\}$. Then condition
3.b of the mentioned theorem is automatic, as so is condition 3.c
since $\frac{Q_U\oplus Q_C}{K_U\oplus K_C}\cong U\oplus C$.

Put now $X=K_U\oplus K_C$ and $Q=Q_U\oplus Q_C$ as in the standard notation. Using the fact that $U$ generates $\mathcal{C}\cap\mathcal{F}$, fix
an epimorphism
$\xymatrix{p:U^{(J)}\ar[r]&\mathbf{a}/t(\mathbf{a})}$. Identifying
$Q/X=U\oplus C$, we clearly have that $\xymatrix{(p
\hspace{0.3cm} 0 ):(Q/X)^{(J)}=U^{(J)}\oplus C^{(J)}
\ar[r]& \mathbf{a}/t(\mathbf{a})}$ is a homomorphism whose restriction to
$H^{-1}(G)^{(J)}=U^{(J)}\oplus F^{(J)}$ is  an epimorphism. Then
also condition 3.d of Theorem \ref{teor.hereditary case} holds.

\vspace*{0.3cm}

$2)\Longrightarrow 1)$ (assuming that the monoid morphism
$\xymatrix{V(R)\ar[r] & V(R/\mathbf{a})}$ is surjective). Let $G$ be
a progenerator of $\mathcal{H}_\mathbf{t}$. By theorem
\ref{teor.hereditary case}, we know that $\text{add}(H^0(G))=
\text{add}(R/\mathbf{a})$ and our extra hypothesis gives a finitely
generated projective $R$-module $P$ such that $P/\mathbf{a}P\cong
H^0(G)$, so that condition 1.a holds. Fixing such a $P$ and
following the proof of theorem \ref{prop.module heart of torsion
pair}, we see that we can represent $G$ by a chain complex

\begin{center}
$\xymatrix{\cdots \ar[r] & 0 \ar[r] & X \ar[r]^{j} &Q \ar[r]^{d} & P
\ar[r] & 0 \ar[r] &\cdots}$
\end{center}
 where $Q$ is finitely generated projective, $j$ is a monomorphism,
 $\text{Im}(d)=\mathbf{a}P$. Then $G$ satisfies all conditions 3.a-d
 of theorem \ref{teor.hereditary case}.
Note that
 $\mathbf{a}P=\mathbf{a}^2P=\mathbf{a}\text{Im}(d)=d(\mathbf{a}Q)$,
 which implies that $Q=\text{Ker}(d)+\mathbf{a}Q$ and, by condition 3.b of the mentioned theorem, that $Q= X+\mathbf{a}Q$. That is, the
 module $Q/X$ is in $\mathcal{C}$ and, by condition 3.c of that theorem, we
 also have that
 $Q/X\in\text{Ker}(\text{Ext}_R^1(?,\mathcal{F}))$. The exact
 sequence needed for our condition 1.b is then $\xymatrix{0\ar[r] & H^{-1}(G)\hspace{0.07cm}\ar@{^(->}[r] & Q/X\ar[r]^{\bar{d}}&\mathbf{a}P \ar[r] &
 0}$.
\end{proof}

We now give some applications of last theorem.

\begin{cor} \label{cor.torsion theory given by projective}
Let $Q$ be a finitely generated $R$-module and let us consider the
hereditary torsion pair $\mathbf{t}=(\mathcal{T},\mathcal{F})$,
where $\mathcal{T}=\text{Ker}(\text{Hom}_R(Q,?))$. If the trace  of
$Q$ in $R$ is finitely generated on the left, then $\mathbf{t}$ is
an HKM torsion pair and
 $\mathcal{H}_\mathbf{t}$ is a module category.
\end{cor}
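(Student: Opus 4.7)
The plan is to exhibit an HKM complex $P^\bullet$ whose associated HKM torsion pair is $\mathbf{t}$, since Proposition \ref{prop.HKM} then yields simultaneously that $\mathbf{t}$ is HKM and that the associated three-term complex (with $t(\Ker(d))$ in degree $-2$) is a progenerator of $\mathcal{H}_\mathbf{t}$, making the heart a module category. Since $\mathbf{a} = tr_Q(R) = \sum_{f \in \Hom_R(Q,R)}\Im(f)$ is finitely generated on the left and each $\Im(f)$ is a left ideal, I can select finitely many $f_1,\ldots,f_n \in \Hom_R(Q,R)$ with $\sum_i \Im(f_i) = \mathbf{a}$. Set $P^\bullet : Q^n \oplus Q \stackrel{d}{\longrightarrow} R$ in degrees $-1,0$, where $d = (f_1,\ldots,f_n,0)$; then $V := H^0(P^\bullet) = R/\mathbf{a}$.

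Next I would verify condition (3) of Proposition \ref{prop.HKM} for $P^\bullet$. Standard condition 1 asks that $\mathcal{T} = \text{Pres}(V) \subseteq \Ker(\Ext_R^1(V,?))$; the equality is immediate since $\mathcal{T} = R/\mathbf{a}\text{-Mod}$, and the inclusion follows from $\mathbf{a}^2 = \mathbf{a}$: for any $f : \mathbf{a} \to T$ with $\mathbf{a}T = 0$, writing $a = \sum r_i a'_i a''_i$ yields $f(a) = \sum r_i a'_i f(a''_i) \in \mathbf{a}T = 0$, so $\Hom_R(\mathbf{a},T) = 0$ and hence $\Ext^1_R(R/\mathbf{a},T) = 0$ via the long exact sequence of $0 \to \mathbf{a} \to R \to R/\mathbf{a} \to 0$. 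The inclusion $\Ker(d) \subseteq \text{Rej}_{\mathcal{T}}(Q^n \oplus Q) = \mathbf{a}(Q^n \oplus Q) = Q^n \oplus Q$ is trivial, since $Q^n \oplus Q \in \mathcal{C}$. For standard condition 5, I take $I = \Hom_R(Q,R)$ and define $h : (Q^n \oplus Q)^{(I)} \to R/t(R)$ by $(x_g, y_g)_{g \in I} \mapsto \sum_g g(y_g) + t(R)$; its restriction to $\Ker(d)^{(I)} = (\Ker(d_1) \oplus Q)^{(I)}$ (where $d_1 = (f_1,\ldots,f_n)$) has image $(\mathbf{a}+t(R))/t(R)$, so the cokernel $R/(\mathbf{a}+t(R))$ is annihilated by $\mathbf{a}$ and thus lies in $\mathcal{T} = \overline{\text{Gen}}(V)$.

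The step I expect to be trickiest is the inclusion $\mathcal{X}(P^\bullet) \subseteq \mathcal{T}$, which is precisely what motivates the extra $Q$ summand in degree $-1$ carrying the zero differential. For $M \in \mathcal{X}(P^\bullet)$, Lemma \ref{lem.Hom and Ext in H}(2.a) tells me every morphism $(g_1,g_2) : Q^n \oplus Q \to M$ is of the form $(x,y) \mapsto d(x,y)\cdot m = d_1(x)\cdot m$ for some $m \in M$; applied to the pair $(0,g)$ with $g : Q \to M$ arbitrary, this forces $g(y) = 0$ for all $y$, hence $g = 0$. Therefore $\Hom_R(Q,M) = 0$ and $M \in \mathcal{T}$. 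With all four items in place, Proposition \ref{prop.HKM}(3)$\Rightarrow$(1) shows $P^\bullet$ is HKM with torsion pair $\mathbf{t}$, and (1)$\Rightarrow$(2) in the same proposition provides the required progenerator of $\mathcal{H}_\mathbf{t}$.
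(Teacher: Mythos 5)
Your argument is correct, but it takes a genuinely different route from the paper. The paper first applies theorem \ref{teor.modular H with easy conditions}: it takes $P=R$, constructs $C=Q^{n}/t(\operatorname{Ker}(p))$ for an epimorphism $p:Q^{n}\twoheadrightarrow\mathbf{a}$, verifies condition 1 of that theorem to conclude $\mathcal{H}_{\mathbf{t}}$ is a module category, and only then observes that the resulting progenerator is quasi-isomorphic to one whose naive truncation $Q^{(n)}\oplus Q^{(n)}\xrightarrow{(jp\ \ 0)}R$ satisfies condition 2 of proposition \ref{prop.HKM}. You bypass theorem \ref{teor.modular H with easy conditions} entirely and verify condition 3 of proposition \ref{prop.HKM} directly for the hand-built two-term complex $Q^{n}\oplus Q\xrightarrow{(f_1,\dots,f_n,0)}R$. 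Both constructions hinge on the same trick of adjoining a copy of $Q$ with zero differential so that factoring morphisms $Q^{n}\oplus Q\to M$ through $d$ forces $\operatorname{Hom}_{R}(Q,M)=0$ (equivalently $\mathcal{X}(P^{\bullet})\subseteq\mathcal{T}$), and the two complexes are essentially the same object; what differs is the scaffolding. Your approach is more elementary and self-contained (it never invokes the TTF machinery of section 7 or the exact sequence $0\to F\to C\to\mathbf{a}P\to 0$), at the cost of having to check all four items of proposition \ref{prop.HKM}(3) by hand; the paper's approach piggybacks on theorem \ref{teor.modular H with easy conditions} and does less local verification. Two small remarks: you quietly use $\mathcal{T}=R/\mathbf{a}\text{-Mod}$, which the paper justifies via \cite[Prop.\ VI.9.4, Cor.\ VI.9.5]{S} --- worth citing; and both your argument and the paper's tacitly require $Q$ to be projective (as the corollary's label, and every later use of it, indicate), since otherwise $Q^{n}\oplus Q$ is not a complex of finitely generated projectives and proposition \ref{prop.HKM} does not apply.
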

\begin{proof}
 We will check assertion 1 of theorem \ref{teor.modular H with easy
conditions} for the  suitable choices. We have that $\mathcal{T}$
fits into a TTF triple $(\mathcal{C},\mathcal{T},\mathcal{F})$,
where $\mathcal{C}=\text{Gen}(Q)=\{T\in R-\text{Mod}:$
$\mathbf{a}T=T\}$, where $\mathbf{a}=tr_Q(R)$ (see \cite[Proposition
VI.9.4 and Corollary VI.9.5]{S}). Taking $P=R$ in theorem
\ref{teor.modular H with easy conditions}, we have an obvious
epimorphism $p:Q^n\twoheadrightarrow\mathbf{a}$, for some integer
$n>0$. We then take $C=\frac{Q^n}{t(\text{Ker}(p))}$ and
$q:C=\frac{Q^n}{t(\text{Ker}(p))}\twoheadrightarrow\mathbf{a}$ the
epimorphism defined by $p$. The fact that
$\text{Ext}_R^1(C,?)_{|\mathcal{F}}$ follows by taking
$F\in\mathcal{F}$ and applying the long exact sequence of
$\text{Ext}_R(?,F)$ to the short exact sequence $0\rightarrow
t(\text{Ker}(p))\hookrightarrow Q^n\twoheadrightarrow C\rightarrow
0$.

On the other hand, the progenerator $G$ of $\mathcal{H}_\mathbf{t}$
given in theorem \ref{teor.modular H with easy conditions} is
quasi-isomorphic to the complex

\begin{center}
$\cdots \longrightarrow 0\longrightarrow t(\text{Ker}(p))\oplus
t(Q)^{(n)}\hookrightarrow Q^{(n)}\oplus
Q^{(n)}\stackrel{\begin{pmatrix} jp & 0
\end{pmatrix}}{\longrightarrow} R\longrightarrow 0 \longrightarrow \cdots $,
\end{center}
where $j:\mathbf{a}\hookrightarrow R$ is the inclusion.  One readily
proves now that the complex $P^\cdot :$ $\cdots \longrightarrow
0\longrightarrow Q^{(n)}\oplus Q^{(n)}\stackrel{\begin{pmatrix} jp &
0
\end{pmatrix}}{\longrightarrow} R\longrightarrow 0 \longrightarrow \cdots$ satisfies
condition 2 of proposition \ref{prop.HKM}.
\end{proof}

 The easy proof of the
following auxiliary result is left to the reader.

\begin{lemma} \label{lem.idemptent ideals in semiperfect}
Let $R$ be a  ring and $\mathbf{a}$ be an idempotent ideal. The
following assertions hold:

\begin{enumerate}
\item If $\xymatrix{p:P\ar@{>>}[r] & M}$ is a projective cover and
$\mathbf{a}M=M$, then $\mathbf{a}P=P$;
\item Suppose that $R$ is semiperfect and let $\{e_1,\dots,e_n\}$ be a family of primitive
orthogonal idempotents such that $\sum_{1\leq i\leq n}e_i=1$.  If
$\mathbf{a}$ is finitely generated on the left, then there is an
idempotent element $e\in R$ (which is a sum of $e_i$'s) such that
$\mathbf{a}=ReR$.
\end{enumerate}
\end{lemma}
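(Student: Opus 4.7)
The plan is to handle the two assertions in order, with assertion (1) serving as the key input for assertion (2).

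For assertion (1), the strategy is the standard Nakayama-style argument, using that the kernel of a projective cover is superfluous (small) in the total space. First I would note that surjectivity of $p$ gives $p(\mathbf{a}P)=\mathbf{a}p(P)=\mathbf{a}M=M$, so that $\mathbf{a}P+\text{Ker}(p)=P$. Since $\text{Ker}(p)$ is superfluous in $P$ by the definition of a projective cover, this forces $\mathbf{a}P=P$.

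For assertion (2) I would invoke that, over the semiperfect ring $R$, every finitely generated left module admits a projective cover, and every finitely generated projective $R$-module decomposes as a finite direct sum of modules of the form $Re_i$. So, fixing a projective cover $\xymatrix{p:P\ar@{>>}[r] & \mathbf{a}}$ and using the idempotency $\mathbf{a}\cdot\mathbf{a}=\mathbf{a}$, part (1) yields $\mathbf{a}P=P$, and one can write $P=\bigoplus_{j=1}^{m}Re_{i_j}$ for indices $i_1,\dots,i_m\in\{1,\dots,n\}$ (not necessarily distinct). Setting $I=\{i_1,\dots,i_m\}$ and $e=\sum_{i\in I}e_i$ produces an idempotent by orthogonality of the $e_i$, and $e$ will be the desired candidate.

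What remains is the equality $\mathbf{a}=ReR$, which I would split into the two inclusions. For $ReR\subseteq\mathbf{a}$, the point is that $\mathbf{a}P=P$ respects the direct sum decomposition, so $\mathbf{a}Re_{i}=Re_{i}$ for each $i\in I$; hence each $e_i\in\mathbf{a}$ and thus $e\in\mathbf{a}$. For the converse, setting $a_j:=p(e_{i_j})$ the identity $a_j=p(e_{i_j}^{2})=e_{i_j}a_j$ shows that $\mathbf{a}=\sum_j Ra_j=\sum_j Re_{i_j}a_j\subseteq\sum_{i\in I}Re_i\mathbf{a}\subseteq ReR$. The only step that requires a moment of care is the direct-sum compatibility $\mathbf{a}(\bigoplus_j Re_{i_j})=\bigoplus_j\mathbf{a}Re_{i_j}$ used in the first inclusion; everything else is routine bookkeeping, consistent with the authors' remark that the proof is easy.
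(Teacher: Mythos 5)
Your proof is correct and is exactly the routine argument the authors had in mind when they wrote ``the easy proof\dots is left to the reader'': assertion (1) is Nakayama-for-superfluous-kernels, and assertion (2) is obtained by applying (1) to a projective cover $p:P\twoheadrightarrow\mathbf{a}$ (which exists since $\mathbf{a}$ is finitely generated and $R$ is semiperfect), decomposing $P\cong\bigoplus_j Re_{i_j}$, reading off $e_{i_j}\in\mathbf{a}$ from $\mathbf{a}Re_{i_j}=Re_{i_j}$, and using $p(e_{i_j})=e_{i_j}p(e_{i_j})$ for the reverse inclusion. The only point to state explicitly, which you flag, is the distributivity $\mathbf{a}\bigl(\bigoplus_j N_j\bigr)=\bigoplus_j\mathbf{a}N_j$, which holds simply because $\mathbf{a}N_j\subseteq N_j$ for each $j$; otherwise the argument is complete as written.
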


For semiperfect rings, we have the following result.

\begin{cor} \label{cor.semiperfect}
Let $R$ be a semiperfect ring, let $\{e_1,\dots,e_n\}$ be a complete
family of primitive orthogonal idempotents,  and let
$\mathbf{t}=(\mathcal{T},\mathcal{F})$ be the right constituent
torsion pair of a TTF triple in $R-\text{Mod}$. The following
assertions are equivalent:

\begin{enumerate}
\item The heart $\mathcal{H}_\mathbf{t}$ is a module category;
\item $\mathbf{t}$ is an HKM torsion pair;
\item There is an idempotent element $e\in R$ (which is a sum of $e_i$'s) such that  $ReR$ is finitely generated on the left
and $ReR$ is the idempotent ideal which defines the TTF triple.
\end{enumerate}
\end{cor}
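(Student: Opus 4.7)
The plan is to prove the cycle $2) \Rightarrow 1) \Rightarrow 3) \Rightarrow 2)$ by invoking results already established in the paper, with no new calculations required.

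For $2) \Rightarrow 1)$, I would appeal to the equivalence $1) \Leftrightarrow 2)$ of Proposition \ref{prop.HKM}: if $\mathbf{t}$ is the HKM torsion pair associated to a two-term complex $P^\bullet$ of finitely generated projectives, then $\mathcal{H}_\mathbf{t}$ has the explicit progenerator $G$ described there, and hence is a module category.

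For $1) \Rightarrow 3)$, let $\mathbf{a}$ denote the idempotent ideal defining the given TTF triple. The right constituent pair of a TTF triple is always hereditary (a TTF class is closed under subobjects), and in our setting it is bounded: $\mathcal{T} = \text{Gen}(R/\mathbf{a})$ is closed under quotients, so for any finitely generated $T\in\mathcal{T}$ we have $\mathbf{a}\subseteq\text{ann}_R(T)$, whence $R/\text{ann}_R(T)$ is a quotient of $R/\mathbf{a}$ and lies in $\mathcal{T}$. Assertion 3 of Theorem \ref{teor.hereditary case} then applies and yields that $\mathbf{a}$ is finitely generated on the left. Lemma \ref{lem.idemptent ideals in semiperfect}(2) now produces an idempotent $e\in R$ (a sum of some of the $e_i$'s) with $\mathbf{a}=ReR$.

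For $3) \Rightarrow 2)$, I would take $Q:=Re$, which is a finitely generated projective $R$-module whose trace $tr_{Re}(R)$ in $R$ is exactly $ReR=\mathbf{a}$, finitely generated on the left by hypothesis. The torsion class $\text{Ker}(\text{Hom}_R(Re,?))$ coincides with $\mathcal{T}$: indeed $\text{Hom}_R(Re,T)\cong eT$, which vanishes iff $ReRT=\mathbf{a}T=0$, iff $T\in\mathcal{T}$. Thus the hypothesis of Corollary \ref{cor.torsion theory given by projective} is satisfied and $\mathbf{t}$ is HKM. The only point worth flagging, rather than a genuine obstacle, is that Theorem \ref{teor.hereditary case}(3) requires boundedness of $\mathbf{t}$, so the short verification of boundedness of the right constituent pair must be made explicit; everything else is bookkeeping.
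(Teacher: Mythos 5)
Your proof is correct and follows essentially the same route as the paper: both use the cycle through assertions 1, 3, 2, with $1)\Rightarrow 3)$ via Theorem \ref{teor.hereditary case} plus Lemma \ref{lem.idemptent ideals in semiperfect}(2), $3)\Rightarrow 2)$ via Corollary \ref{cor.torsion theory given by projective} with $Q=Re$, and $2)\Rightarrow 1)$ via the HKM machinery (the paper cites Theorem 3.8 of \cite{HKM} directly, you cite Proposition \ref{prop.HKM}, which rests on the same external result). Your explicit verification that the right constituent of a TTF triple is automatically bounded (so Theorem \ref{teor.hereditary case}(3) is applicable) fills in a step the paper treats as understood; it is a worthwhile clarification rather than a deviation.
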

\begin{proof}
$1)\Longrightarrow 3)$ By theorem \ref{teor.hereditary case}, we
know that $\mathbf{a}$ is finitely generated on the left. Then
assertion 3 follows from  lemma \ref{lem.idemptent ideals in
semiperfect}.

$3)\Longrightarrow 2)$ is a particular case of corollary
\ref{cor.torsion theory given by projective}.

$2)\Longrightarrow 1)$ follows from \cite[Theorem 3.8]{HKM}.

\end{proof}

As a consequence of theorem \ref{teor.hereditary case} and corollary
\ref{cor.semiperfect}, we get more significative classes of rings
for which we can identify all the hereditary torsion pairs whose
heart is a module category.

\begin{cor} \label{cor.local and artinian}
Let $\mathbf{t}=(\mathcal{T},\mathcal{F})$ be a hereditary torsion
pair in $R-\text{Mod}$ and let $\mathcal{H}_\mathbf{t}$ be its heart. The
following assertions hold:

\begin{enumerate}
\item If $R$ is a local ring and $\mathcal{H}_\mathbf{t}$ is a module category, then $\mathbf{t}$ is either
$(R-\text{Mod},0)$ or $(0,R-\text{Mod})$;
\item When $R$ is right perfect,   $\mathcal{H}_\mathbf{t}$ is a module
category if, and only if, there is an idempotent element $e\in R$
such that $\mathcal{T}=\{T\in R-\text{Mod}:$ $eT=0\}$ and $ReR$ is
finitely generated on the left;
\item If $R$ is left Artinian (e.g. an Artin algebra), then $\mathcal{H}_\mathbf{t}$ is
always module category.
\end{enumerate}
\end{cor}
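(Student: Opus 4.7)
The plan is to reduce all three parts to Corollary \ref{cor.semiperfect}, exploiting that local, right perfect and left Artinian rings are all semiperfect. In each case the principal step is to exhibit $\mathbf{t}$ as the right constituent of a TTF triple with defining idempotent ideal finitely generated on the left, and in each case the shape of $\mathcal{T}$ is controlled by a suitable idempotent $e$ whose associated trace $ReR$ plays the role of the ideal.

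For part (1), I would apply Theorem \ref{teor.hereditary case}(2) and obtain that $\bar{\mathbf{b}}=\mathbf{b}/t(R)$ is a finitely generated idempotent two-sided ideal of $\bar{R}=R/t(R)$ with $R/\mathbf{b}\in\mathcal{T}$. The key observation is that the only finitely generated (on the left) idempotent two-sided ideals of a local ring are $0$ and the whole ring: if an ideal $I$ lies in $J(R)$, then $I=I^{2}\subseteq J(R)I$ forces $I=0$ by Nakayama, and otherwise $I$ contains a unit and equals $R$. Since $\bar{R}$ is either zero or local, $\bar{\mathbf{b}}\in\{0,\bar{R}\}$. If $\bar{\mathbf{b}}=0$, then $\bar{R}=R/\mathbf{b}\in\mathcal{T}\cap\mathcal{F}=0$, so $R=t(R)\in\mathcal{T}$ and $\mathbf{t}=(R\text{-Mod},0)$. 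If $\bar{\mathbf{b}}=\bar{R}$, the TTF class in $\bar{R}\text{-Mod}$ is trivial, whence $V/t(R)V=0$ for $V=H^{0}(G)$; $V$ is finitely generated by lemma \ref{lem.description  de t} and, excluding the previous case, $t(R)\subseteq J(R)$ by locality, so Nakayama yields $V=0$ and hence $\mathcal{T}=\text{Gen}(V)=0$, i.e.\ $\mathbf{t}=(0,R\text{-Mod})$.

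For part (3) I would construct the TTF structure by hand. Fix a complete set $\{e_{1},\dots,e_{n}\}$ of primitive orthogonal idempotents and set $e=\sum_{i\in E_{\mathcal{F}}}e_{i}$ with $E_{\mathcal{F}}=\{i:Re_{i}/Je_{i}\in\mathcal{F}\}$. Since $R$ is left Noetherian (Hopkins--Levitzki), every hereditary torsion pair has $\mathcal{T}$ closed under direct limits, and every module over the left Artinian ring $R$ admits a Loewy decomposition with semisimple subquotients. Using this, together with closure of $\mathcal{T}$ under submodules, quotients, extensions and direct limits, the identity $\mathcal{T}=\{T:eT=0\}$ is checked by analyzing which primitive idempotents fail to annihilate the simple subquotients. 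Consequently $\mathbf{t}$ is the right constituent of the TTF triple defined by $ReR$, which is trivially finitely generated on the left, so Corollary \ref{cor.semiperfect} produces a module category heart.

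For part (2) the 'if' direction is immediate from Corollary \ref{cor.semiperfect}, since the hypothesis identifies $\mathbf{t}$ as the right constituent of the TTF triple defined by $ReR$. For the 'only if' direction, Theorem \ref{teor.hereditary case}(2) gives a finitely generated idempotent ideal $\bar{\mathbf{b}}$ of the semiperfect quotient $\bar{R}$, so by lemma \ref{lem.idemptent ideals in semiperfect}(2) one has $\bar{\mathbf{b}}=\bar{R}\bar{e}\bar{R}$ for an idempotent $\bar{e}$ that is a sum of (images of) primitive idempotents; the semiperfect condition of $R$ allows one to lift $\bar{e}$ to an idempotent $e\in R$. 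The main obstacle, absent in parts (1) and (3), is to upgrade this to a genuine TTF description of $\mathbf{t}$ itself (not merely of $\mathbf{t}\cap\bar{R}\text{-Mod}$): one must combine $R/\mathbf{b}\in\mathcal{T}$ with the right T-nilpotence of $J(R)$ to control the torsion behaviour of $t(R)$ and conclude that $\mathcal{T}=\{T:eT=0\}$, whereupon Corollary \ref{cor.semiperfect} closes the argument.
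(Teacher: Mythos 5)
For part (1), your argument is correct and in fact a bit more streamlined than the paper's: by classifying finitely generated idempotent two-sided ideals of a local ring via Nakayama you avoid invoking lemma \ref{lem.quasi-tilting implies projective} and the structure of finitely generated projectives over local rings, which is what the paper does after also deducing $\mathbf{b}=R$. For part (3), your by-hand construction of the idempotent $e$ from the set $E_\mathcal{F}$ of primitive idempotents whose simple tops are torsionfree is sound (the key fact is that over a left Artinian ring $e_iT\neq0$ iff $S_i$ is a composition factor of $T$), but it amounts to reproving the fact the paper simply cites from Stenstr\"om, namely that every hereditary torsion pair over a left Artinian ring is the right constituent of a TTF triple defined by a two-sided ideal generated by an idempotent.

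Part (2), however, has a genuine gap in the ``only if'' direction, and you seem aware of it. The paper's strategy is decisively different: it first invokes \cite[Corollary VIII.6.3]{S}, which asserts that over a right perfect ring \emph{every} hereditary torsion pair is already the right constituent of a TTF triple in $R$-Mod, and then applies Theorem \ref{teor.hereditary case} and \cite[Corollary VIII.6.4]{S} directly over $R$. You instead work in $\bar{R}=R/t(R)$, obtain $\bar{\mathbf{b}}=\bar R\bar e\bar R$, and then need to (i) lift $\bar e$ to an idempotent of $R$ and (ii) upgrade the TTF description from $\bar{R}$-Mod to a description of $\mathcal{T}$ inside $R$-Mod of the form $\{T:eT=0\}$. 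Step (i) is already problematic: semiperfectness only guarantees lifting of idempotents modulo $J(R)$, not modulo the torsion ideal $t(R)$, which need not be contained in $J(R)$ nor be nil. Step (ii) is the heart of the matter, and in your write-up it is only gestured at (``one must combine $R/\mathbf{b}\in\mathcal{T}$ with the right T-nilpotence of $J(R)$ to control the torsion behaviour of $t(R)$''); no argument is actually given. This is exactly the difficulty the paper's citation of \cite[VIII.6.3]{S} sidesteps, and without that input (or an explicit substitute), the ``only if'' direction of (2) does not go through.
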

\begin{proof}
1) By theorem \ref{prop.module heart of torsion pair}, we have a
finitely presented $R$-module $V$ such that
$\mathcal{T}=\text{Gen}(V)\subseteq\text{Ker}(\text{Ext}_R^1(V,?))$.
Using theorem \ref{teor.hereditary case} and its proof, we get that
$\mathbf{t}'=(\mathcal{T}\cap\frac{R}{t(R)}-\text{Mod},\mathcal{F})$
is the right constintuent of a TTF triple in $\bar{R}-\text{Mod}$
defined by an idempotent ideal $\bar{\mathbf{b}}=\mathbf{b}/t(R)$ of
$\bar{R}:=R/t(R)$ which is finitely generated on the left, where
$\mathbf{b}=ann_{R}(V/t(R)V)$. Since $\bar{R}$ is also a local ring,
and hence semiperfect, lemma \ref{lem.idemptent ideals in
semiperfect} says that $\bar{\mathbf{b}}=\bar{R}\bar{e}\bar{R}$, for
some idempotent element $\bar{e}\in\bar{R}$, which is necessarily
equal to $\bar{1}$ or $0$. The fact that $\bar{R}\in\mathcal{F}$
implies that $\bar{e}=1$, so that $\bar{b}=\bar{R}$ and
$b=R=ann_{R}(V/t(R)V)$, thus $V=t(R)V$ and, by lemma
\ref{lem.quasi-tilting implies projective} , we deduce that $V$ is
projective. But all finitely generated projective modules over a
local ring are free. Then we have $V=0$ or $V=R^{(n)},$ for some set $n\in\mathbb{N}$,  so that
either $\mathbf{t}=(0,R-\text{Mod})$ or $\mathbf{t}=
(R-\text{Mod},0).$

2)  Assume now that $R$ is right perfect and that
$\mathcal{H}_\mathbf{t}$ is a module category. By
  \cite[Corollary VIII.6.3]{S}, we know that $\mathbf{t}$ is the
  right constituent of a TTF triple. By theorem \ref{teor.hereditary
  case}, the associated idempotent ideal is finitely generated on
  the left and, by \cite[Corollary
  VIII.6.4]{S}, we know that it is  of
  the form $AeA$. Conversely, if $e\in A$ is idempotent and $AeA$ is
  finitely generated on the left and $\mathcal{T}=\{T\in R-\text{Mod}:$ $eT=0\}$, then corollary \ref{cor.semiperfect} says that
   $\mathcal{H}_\mathbf{t}$ is
  a module category.

3) This assertion is a direct consequence of \cite[Example
VI.8.2]{S} and corollary \ref{cor.semiperfect} since all left ideals
are finitely generated.
\end{proof}

Another consequence of theorem \ref{teor.modular H with easy
conditions} is the following.

\begin{cor} \label{cor.when a is torsionfree}
Let $\mathbf{a}$ be an idempotent ideal of $R$,  which is finitely
generated on the left, let $\mathbf{t}=(\mathcal{T},\mathcal{F})$ be
the right constituent torsion pair of the associated TTF triple in
$R$-Mod and suppose that $\mathbf{a}\in\mathcal{F}$ and that
 the monoid morphism
$V(R)\longrightarrow V(R/\mathbf{a})$ is surjective.  Consider the
following assertions:

\begin{enumerate}
\item $\mathcal{H}_\mathbf{t}$ is a module category;
\item There is an epimorphism $\xymatrix{M\ar@{>>}[r]&\mathbf{a}}$, where $M$ is a finitely generated projective
$R/t(R)$-module which is in $\mathcal{C}$.

\item $\mathbf{a}$ is the trace of some finitely generated projective
left $R$-module;

\item $\mathbf{t}$ is an HKM torsion pair;

\item
$\mathcal{H}_\mathbf{t}$  has a progenerator which is a classical
tilting complex.
\end{enumerate}
Then the implications $5)\Longrightarrow 4)$ and $3)\Longrightarrow
4)\Longrightarrow 1)\Longleftrightarrow 2)$ hold true. When the
monoid morphism $V(R)\longrightarrow V(R/t(R))$ is also surjective,
all assertions are equivalent.
\end{cor}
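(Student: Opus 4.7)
The plan is to establish the implications in two stages: $5)\Rightarrow 4)$, $3)\Rightarrow 4)$, $4)\Rightarrow 1)$ and $1)\Leftrightarrow 2)$ follow from the standing hypotheses alone, and under the additional surjectivity $V(R)\to V(R/t(R))$ the loop is closed via $2)\Rightarrow 3)\Rightarrow 5)$. The three initial implications are essentially bookkeeping: $5)\Rightarrow 4)$ is immediate from Corollary \ref{cor.classical tilting progenerator}; $3)\Rightarrow 4)$ is Corollary \ref{cor.torsion theory given by projective}, applicable because $\mathbf{a}$ is assumed finitely generated on the left; and $4)\Rightarrow 1)$ follows from Proposition \ref{prop.HKM}, since any HKM complex produces a progenerator of $\mathcal{H}_\mathbf{t}$.

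For $1)\Rightarrow 2)$ I would apply Theorem \ref{teor.modular H with easy conditions} to extract a finitely generated projective $P$ with $P/\mathbf{a}P$ a progenerator of $R/\mathbf{a}-\text{Mod}$ and an exact sequence $0\to F\to C\to\mathbf{a}P\to 0$ with $C$ finitely generated in $\mathcal{C}$, $\text{Ext}_R^1(C,\mathcal{F})=0$, and $C$ generating $\mathcal{C}\cap\mathcal{F}$. Following the proof of that theorem, $U:=C/t(C)$ is a finitely generated projective $R/t(R)$-module in $\mathcal{C}$. Since $\mathbf{a}\in\mathcal{F}$ forces $\mathbf{a}P\in\mathcal{F}$ (as a summand of $\mathbf{a}^n$), the surjection $C\twoheadrightarrow\mathbf{a}P$ factors through $U\twoheadrightarrow\mathbf{a}P$. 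Next, a split epimorphism $(P/\mathbf{a}P)^n\twoheadrightarrow R/\mathbf{a}$ lifts through the projective module $P^n$ to an $R$-homomorphism $\phi\colon P^n\to R$ with $\phi(P^n)+\mathbf{a}=R$; multiplying by $\mathbf{a}$ and using $\mathbf{a}^2=\mathbf{a}$ yields $\phi(\mathbf{a}P^n)=\mathbf{a}\phi(P^n)=\mathbf{a}$, whence $M:=U^n\twoheadrightarrow(\mathbf{a}P)^n=\mathbf{a}P^n\twoheadrightarrow\mathbf{a}$ is the desired epimorphism. For the converse $2)\Rightarrow 1)$, I would apply Theorem \ref{teor.modular H with easy conditions} with $P=R$ and $C=M$; the needed checks are that $F=\text{Ker}(M\to\mathbf{a})\in\mathcal{F}$ (since $M$ is a summand of $(R/t(R))^n$ with $R/t(R)\in\mathcal{F}$ and $\mathcal{F}$ is closed under finite sums and submodules), that $M$ generates $\mathcal{C}\cap\mathcal{F}$ (since $\mathbf{a}$ already generates $\mathcal{C}$ via $\mathbf{a}^{(N)}\twoheadrightarrow N=\mathbf{a}N$ and $M\twoheadrightarrow\mathbf{a}$), and that $\text{Ext}_R^1(M,\mathcal{F})=0$, which reduces to $\text{Ext}_R^1(R/t(R),F')=0$ for $F'\in\mathcal{F}$; this last equality follows from the sequence $0\to t(R)\to R\to R/t(R)\to 0$ and the vanishing $\text{Hom}_R(t(R),F')=0$ (as $t(R)\in\mathcal{T}$ and $F'\in\mathcal{F}$).

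Under the additional surjectivity $V(R)\to V(R/t(R))$, the plan for $2)\Rightarrow 3)\Rightarrow 5)$ proceeds by lifting $M$ to a finitely generated projective $R$-module $Q_1$ with $Q_1/t(R)Q_1\cong M$. The crucial structural point is the direct sum decomposition $Q_1=\mathbf{a}Q_1\oplus t(R)Q_1$ as $R$-modules: the sum equals $Q_1$ because $M=\mathbf{a}M$, and the intersection is zero because $\mathbf{a}Q_1\in\mathcal{F}$ (summand of $\mathbf{a}^n$, using $\mathbf{a}\in\mathcal{F}$) while $t(R)Q_1\in\mathcal{T}$. Hence $Q':=\mathbf{a}Q_1$ is a finitely generated projective $R$-module in $\mathcal{C}\cap\mathcal{F}$, the composite $Q'\hookrightarrow Q_1\twoheadrightarrow M\twoheadrightarrow\mathbf{a}$ is surjective, and any $f\colon Q'\to R$ satisfies $f(Q')=\mathbf{a}f(Q')\subseteq\mathbf{a}$, giving $tr_{Q'}(R)=\mathbf{a}$ and proving 3). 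For the upgrade $3)\Rightarrow 5)$ using this specific $Q'$, the HKM complex from the proof of Corollary \ref{cor.torsion theory given by projective} becomes $P^\bullet\colon Q'^{(n)}\oplus Q'^{(n)}\to R$ with map $\begin{pmatrix}jp & 0\end{pmatrix}$; because $Q'\in\mathcal{F}$, the degree $-2$ term $t(\text{Ker}(p))\oplus t(Q')^{(n)}$ of the three-term progenerator of Theorem \ref{teor.modular H with easy conditions} vanishes, and $\text{Ker}(d)=\text{Ker}(p)\oplus Q'^{(n)}$ lies in $\mathcal{F}$ and in $\mathbf{a}(Q'^{(n)}\oplus Q'^{(n)})$ (using $Q'\in\mathcal{C}\cap\mathcal{F}$), so standard conditions 1, 3, 5 hold and $P^\bullet$ is a classical tilting progenerator by Corollary \ref{cor.classical tilting progenerator}.

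The most delicate steps will be the projectivity-plus-idempotency lifting that produces the epimorphism $M\twoheadrightarrow\mathbf{a}$ in $1)\Rightarrow 2)$, and the splitting $Q_1=\mathbf{a}Q_1\oplus t(R)Q_1$ in $2)\Rightarrow 3)$, both of which crucially leverage the hypothesis $\mathbf{a}\in\mathcal{F}$ to separate $\mathcal{T}$-torsion from $\mathcal{F}$-torsionfree summands.
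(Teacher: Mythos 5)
Your architecture (the unconditional implications $5)\Rightarrow 4)$, $3)\Rightarrow 4)\Rightarrow 1)\Leftrightarrow 2)$, then closing the loop with $2)\Rightarrow 3)\Rightarrow 5)$ under the extra surjectivity hypothesis) matches the paper, and the arguments for $2)\Rightarrow 1)$ and $2)\Rightarrow 3)\Rightarrow 5)$ are essentially the paper's (the $Q_1=\mathbf{a}Q_1\oplus t(R)Q_1$ splitting in particular is exactly what the paper uses).

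However, there is a genuine gap in your proof of $1)\Rightarrow 2)$. You lift a split epimorphism $(P/\mathbf{a}P)^n\twoheadrightarrow R/\mathbf{a}$ to an $R$-homomorphism $\phi\colon P^n\to R$ with $\phi(P^n)+\mathbf{a}=R$, and then claim that multiplying by $\mathbf{a}$ and using $\mathbf{a}^2=\mathbf{a}$ gives $\phi(\mathbf{a}P^n)=\mathbf{a}\phi(P^n)=\mathbf{a}$. This is false: from $\phi(P^n)+\mathbf{a}=R$ one only gets $\mathbf{a}\phi(P^n)+\mathbf{a}^2=\mathbf{a}$, i.e.\ $\mathbf{a}\phi(P^n)+\mathbf{a}=\mathbf{a}$, which is vacuous. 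An idempotent ideal is not superfluous, so $\phi(P^n)$ can be a proper ideal even though it is co-generating modulo $\mathbf{a}$. Concretely, take $R=K\times K$, $\mathbf{a}=K\times 0$, $P=R$, and $\phi\colon R\to R$, $(a,b)\mapsto(0,b)$: then $\phi(P)+\mathbf{a}=R$ but $\phi(\mathbf{a}P)=0\neq\mathbf{a}$. The paper avoids this by not going through $P$ at all: since $\mathbf{a}$ is idempotent it lies in $\mathcal{C}$, by hypothesis it lies in $\mathcal{F}$, so $\mathbf{a}\in\mathcal{C}\cap\mathcal{F}$, and the module $C$ (which equals $C/t(C)$ because it is in $\mathcal{F}$) generates $\mathcal{C}\cap\mathcal{F}$; since $\mathbf{a}$ is finitely generated on the left, some finite power $C^n$ maps onto $\mathbf{a}$. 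Replacing your lifting argument with this direct use of the generating property in Theorem \ref{teor.modular H with easy conditions} repairs the gap and the rest of your proof stands.
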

\begin{proof}
$3)\Longrightarrow 4)\Longrightarrow 1)$ and $5)\Longrightarrow 4)$
follow from corollaries \ref{cor.torsion theory given by projective}
and  \ref{cor.classical tilting progenerator}.

$1)\Longrightarrow 2)$ Consider the finitely generated projective
module $P$ and the exact sequence \linebreak $\xymatrix{0\ar[r] & F
\ar[r] & C\ar[r]& \mathbf{a}P \ar[r] & 0}$ given by theorem
\ref{teor.modular H with easy conditions}. It follows that
$C\in\mathcal{F}$ since so do $F$ and $\mathbf{a}P$. But, then, the
fact that $\text{Ext}_R^1(C,?)_{|\mathcal{F}}=0$ implies that $C$ is
a finitely generated projective $R/t(R)$-module. Since $C$ generates
$\mathcal{C}\cap\mathcal{F}$ we get an epimorphism
$\xymatrix{M:=C^n\ar@{>>}[r]&\mathbf{a}}$ as desired.

$2)\Longrightarrow 1)$  is a direct consequence of theorem
\ref{teor.modular H with easy conditions}.

$2)\Longrightarrow 3), 5)$ (Assuming that the monoid map
$\xymatrix{V(R)\ar[r] &V(R/t(R))}$ is surjective). We have a
finitely generated projective $R$-module $Q$ such that
$M\cong\frac{Q}{t(R)Q}=\frac{Q}{t(Q)}$. But we then get
$Q=\mathbf{a}Q\oplus t(Q)$ since $\mathbf{a}M=M$ and $\mathbf{a}Q$
is in $\mathcal{F}$. It follows that $M\cong\mathbf{a}Q$ is also
projective as an $R$-module. Then we have
$\text{Gen}(M)=\text{Gen}(\mathbf{a})$, so that
$\mathbf{a}=\text{tr}_M(R)$.

 On the other hand, by taking $C=M$ in theorem \ref{teor.modular H with easy
conditions}, we know that the complex

\begin{center}
$\xymatrix{G:=\cdots \ar[r] & 0\ar[r]& M\oplus
M\ar[rr]^{\hspace{0.4cm}\begin{pmatrix} jq & 0
\end{pmatrix}}&&P\ar[r]& 0\ar[r] &  \cdots }$
\end{center}
concentrated in degrees $-1$ and $0$, is a progenerator of
$\mathcal{H}_\mathbf{t}$.
\end{proof}

In  \cite[Corollary 2.13]{MT} (see also \cite[Lemma 4.1]{CMT}) the
authors proved that a faithful (not necessarily hereditary) torsion
pair in $R-\text{Mod}$ has a heart which is a module category if,
and only if, it is an HKM torsion  pair. Our next result shows that, for
hereditary torsion pairs, we can be more precise.

\begin{cor} \label{cor.faithful hereditary torsion pair}
Let $R$ be a ring  and let $\mathbf{t}=(\mathcal{T},\mathcal{F})$ be
a faithful hereditary torsion pair in $R-\text{Mod}$. Consider the
following assertions:

\begin{enumerate}
\item There is a finitely generated projective $R$-module $Q$ such
that $\mathcal{T}=\text{Ker}(\text{Hom}_R(Q,?))$ and the trace
$\mathbf{a}$ of $Q$ in $R$ is finitely generated as a left ideal;
\item $\mathcal{H}_\mathbf{t}$ is a module category;
\item $\mathbf{t}$ is an HKM torsion pair;
\item $\mathcal{H}_\mathbf{t}$ has a progenerator which is a
classical tilting complex;
\item There is an idempotent ideal $\mathbf{a}$ of $R$ which satisfies the following properties:

\begin{enumerate}
\item $\mathbf{t}$ is the right constituent torsion pair of the TTF
triple defined by $\mathbf{a}$;
\item there is a progenerator $V$ of $R/\mathbf{a}-\text{Mod}$ which
admits a finitely generated projective resolution \newline
$\xymatrix{Q\ar[r]^{d}&P\ar[r] &V\ar[r] &0}$ in $R-\text{Mod}$
satisfying the following two properties:

\begin{enumerate}
\item $\text{Ker}(d)\subseteq\mathbf{a}Q$;
\item there is a morphism
$\xymatrix{Q^{(J)}\ar[r]^{h}&\mathbf{a}}$, for some set $J$, such
that $\xymatrix{h_{|
\text{Ker}(d)^{(J)}}:\text{Ker}(d)^{(J)}\ar[r]&\mathbf{a}}$ is an
epimorphism.
\end{enumerate}
\end{enumerate}
\end{enumerate}
Then the implications $1)\Longrightarrow 2)\Longleftrightarrow
3)\Longleftrightarrow 4)\Longleftrightarrow 5)$ hold true. When the
monoid morphism \linebreak $\xymatrix{V(R) \ar[r] & V(R/I)}$ is
surjective, for all idempotent two-sided ideals $I$ of $R$, all
assertions are equivalent.
\end{cor}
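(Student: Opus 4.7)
The strategy is to establish $1) \Rightarrow$ all others as a direct application of results already proved, then close the equivalence cycle $2) \Leftrightarrow 3) \Leftrightarrow 4) \Leftrightarrow 5)$, and finally handle $2) \Rightarrow 1)$ under the monoid hypothesis by invoking Corollary \ref{cor.when a is torsionfree}. For $1) \Rightarrow 2), 3)$ one applies Corollary \ref{cor.torsion theory given by projective} directly, while $4) \Rightarrow 3)$ is immediate from Corollary \ref{cor.classical tilting progenerator}. The implication $3) \Rightarrow 2)$ is \cite[Corollary 2.13]{MT}, mentioned in the paragraph preceding the statement and valid for any faithful torsion pair.

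For $3) \Rightarrow 4)$ I would argue as follows. Given an HKM complex $P^\bullet = (Q \stackrel{d}{\longrightarrow} P)$ with associated torsion pair $\mathbf{t}$, Proposition \ref{prop.HKM} yields the progenerator $G = (t(\text{Ker}(d)) \hookrightarrow Q \to P)$ of $\mathcal{H}_\mathbf{t}$. Faithfulness and heredity give $\text{Ker}(d) \subseteq Q \in \mathcal{F}$, so $t(\text{Ker}(d)) = 0$ and $G$ is quasi-isomorphic to $P^\bullet$, which is therefore a classical tilting complex by Corollary \ref{cor.classical tilting progenerator}. For $4) \Rightarrow 5)$ I compare the standard conditions of Corollary \ref{cor.classical tilting progenerator} with condition 5. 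Setting $V := H^0(P^\bullet)$, the module $V$ is a progenerator of $R/\mathbf{a}-\text{Mod}$ with $\text{add}(V) = \text{add}(R/\mathbf{a})$ (by Lemma \ref{lem.description  de t} and the analysis of Theorem \ref{teor.hereditary case}), and the resolution $Q \to P \to V \to 0$ is supplied by the complex itself. Standard condition 3, combined with $\text{Rej}_\mathcal{T}(Q) = \mathbf{a}Q$ (from heredity), yields 5(b)(i). For 5(b)(ii), standard condition 5 provides $h: Q^{(I)} \to R/t(R) = R$ with $\text{Coker}(h_{|\text{Ker}(d)^{(I)}}) \in \overline{\text{Gen}}(V) = \mathcal{T}$, forcing $\mathbf{a} \subseteq \text{Im}(h_{|\text{Ker}(d)^{(I)}})$; the containment $\text{Ker}(d) \subseteq \mathbf{a}Q$ gives the reverse inclusion $h(\text{Ker}(d)^{(I)}) \subseteq \mathbf{a}h(Q^{(I)}) \subseteq \mathbf{a}$, so $h_{|\text{Ker}(d)^{(I)}}$ is an epimorphism onto $\mathbf{a}$. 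The converse $5) \Rightarrow 4)$ verifies the standard conditions for $P^\bullet = (Q \to P)$: condition 1 uses that $V$ is a progenerator of $R/\mathbf{a}-\text{Mod}$ together with $\mathbf{a}^2 = \mathbf{a}$ (which forces any $R$-extension of $V$ by $T \in \mathcal{T}$ to be annihilated by $\mathbf{a}$, so $\text{Ext}_R^1(V,T) = \text{Ext}_{R/\mathbf{a}}^1(V,T) = 0$); condition 3 combines 5(b)(i) with $\text{Ker}(d) \in \mathcal{F}$ (automatic from faithfulness and heredity); standard condition 5 is obtained by composing $h$ with $\mathbf{a} \hookrightarrow R$, producing cokernel $R/\mathbf{a} \in \mathcal{T} = \overline{\text{Gen}}(V)$.

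For the converse $2) \Rightarrow 1)$ under the monoid hypothesis, Corollary \ref{cor.faithful hereditary pair} supplies the idempotent ideal $\mathbf{a}$ finitely generated on the left defining the TTF triple whose right constituent is $\mathbf{t}$, and $\mathbf{a} \in \mathcal{F}$ follows from $\mathbf{a} \subseteq R \in \mathcal{F}$ together with heredity of $\mathcal{F}$. Faithfulness forces $t(R) = 0$, making $V(R) \to V(R/t(R))$ trivially surjective, and the hypothesis supplies surjectivity of $V(R) \to V(R/\mathbf{a})$, so all five assertions of Corollary \ref{cor.when a is torsionfree} become equivalent. Its condition 3 provides a finitely generated projective $Q$ with $\mathbf{a} = \text{tr}_Q(R)$, whence $\mathcal{T} = \{T : \mathbf{a}T = 0\} = \text{Ker}(\text{Hom}_R(Q,?))$ and the trace $\mathbf{a}$ is finitely generated on the left, establishing condition 1. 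The main obstacle I anticipate lies in $4) \Rightarrow 5)$: one must verify that the image of $h_{|\text{Ker}(d)^{(I)}}$ is \emph{precisely} $\mathbf{a}$, which requires a sandwich argument combining the lower bound coming from the cokernel condition with the upper bound coming from $\text{Ker}(d) \subseteq \mathbf{a}Q$.
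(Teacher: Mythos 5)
Your decomposition of the implication cycle differs from the paper's in a couple of places and is mostly sound: the paper proves $2)\Rightarrow 4)$ by invoking \cite[Lemma 4.1]{CMT}, whereas you instead prove $3)\Rightarrow 4)$ directly from Proposition \ref{prop.HKM}, observing that faithfulness forces $t(\mathrm{Ker}(d))=0$ so the auxiliary progenerator $G$ collapses to $P^\bullet$ itself. That argument is correct and arguably cleaner, avoiding an external reference. Your use of Corollary \ref{cor.when a is torsionfree} for $2)\Rightarrow 1)$ under the monoid hypothesis matches the paper, and your verification that $V(R)\to V(R/t(R))$ is trivially surjective because $t(R)=0$ is exactly the point. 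Your $5)\Rightarrow 4)$ verification of the standard conditions is also correct: the observation that $\mathbf{a}^2=\mathbf{a}$ forces any $R$-extension of $V$ by $T\in\mathcal{T}$ to be annihilated by $\mathbf{a}$ is the right argument for standard condition 1.

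There is, however, a genuine gap in your proof of $4)\Rightarrow 5)$, and it is not the one you flagged. Your sandwich argument correctly pins down that $\mathrm{Im}(h_{|\mathrm{Ker}(d)^{(I)}})=\mathbf{a}$, but this does not produce the morphism that condition 5(b)(ii) asks for: you need a morphism $Q^{(J)}\to\mathbf{a}$, whereas your $h$ has codomain $R$, and there is no reason for $h(Q^{(I)})$ itself to lie in $\mathbf{a}$, so $h$ cannot simply be corestricted. The paper handles this inside the proof of Theorem \ref{teor.hereditary case} (Step 3 of $1)\Rightarrow 2)$): it uses that $\mathbf{a}$ is finitely generated on the left (provided by Corollary \ref{cor.faithful hereditary pair}) to fix an epimorphism $\pi:R^{(n)}\twoheadrightarrow\mathbf{a}$ and then forms $g:=\pi\circ h^{(n)}:Q^{(I\times n)}\to\mathbf{a}$. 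The restriction of $g$ to $\mathrm{Ker}(d)^{(I\times n)}$ has image $\pi(\mathbf{a}^{(n)})=\mathbf{a}\cdot\pi(R^{(n)})=\mathbf{a}^2=\mathbf{a}$, where the idempotence of $\mathbf{a}$ is essential. This is exactly what condition 2.c (equivalently 3.d) of Theorem \ref{teor.hereditary case} packages, and it is why the paper's $2),4)\Rightarrow 5)$ simply "applies Theorem \ref{teor.hereditary case} to $G$" rather than manipulating the raw standard condition 5 as you do. Your route can be repaired with precisely this composition step, but as written the morphism you produce lands in $R$, not in $\mathbf{a}$.
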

\begin{proof}
$1)\Longrightarrow 2)$ follows from corollary \ref{cor.torsion
theory given by projective}.

$2)\Longrightarrow 4)$ follows from  \cite[Lemma 4.1]{CMT}.

$4)\Longrightarrow 3)$ is a consequence of corollary
\ref{cor.classical tilting progenerator}.

$3)\Longrightarrow 2)$ follows from \cite[Theorems 2.10 and
2.15]{HKM}.

$2),4)\Longrightarrow 5)$ By corollary \ref{cor.faithful hereditary
pair}, we know that $\mathbf{t}$ is the right constituent of a TTF
triple in $R-\text{Mod}$ defined by an idempotent ideal $\mathbf{a}$
which is finitely generated on the left. Let now fix a classical
tilting complex $\xymatrix{G:= \cdots\ar[r] &0\ar[r]& Q\ar[r]^{d}
&P\ar[r] & 0 \ar[r] & \cdots}$ which is a progenerator of
$\mathcal{H}_\mathbf{t}$. Assertion 5 follows by taking $V=H^0(G)$
and by applying theorem \ref{teor.hereditary case} to $G$.

$5)\Longrightarrow 3)$ follows by taking the complex
$\xymatrix{P^{\bullet}=G:= \cdots\ar[r] &0\ar[r]& Q\ar[r]^{d}
&P\ar[r] & 0 \ar[r] & \cdots}$, concentrated in degrees $-1$ and
$0$, and applying corollary \ref{cor.classical tilting
progenerator}.

$2)\Longrightarrow 1)$ (Assuming that $\xymatrix{V(R)\ar[r]
&V(R/I)}$ is surjective, for all two-sided ideals $I$ of $R$). It is
a consequence of corollary \ref{cor.when a is torsionfree}.
\end{proof}

\begin{cor} \label{cor.split and commutative case}
Let $\mathbf{a}$ be a two-sided idempotent ideal of $R$ whose
associated TTF triple is left split and put
$\mathbf{t}=(\mathcal{T},\mathcal{F})$. Then
$\mathcal{H}_\mathbf{t}$ is equivalent to $\frac{R}{t(R)}\times
\frac{R}{\mathbf{a}}-\text{Mod}$. When the TTF triple is centrally split, then
$\mathcal{H}_\mathbf{t}$ is equivalent to $R-\text{Mod}$.
\end{cor}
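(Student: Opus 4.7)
The plan is to exhibit a progenerator of $\mathcal{H}_\mathbf{t}$ consisting of stalk complexes and then compute its endomorphism ring via Proposition \ref{prop.progenerator as sum of stalks}. The left-split hypothesis is the key mechanism: it forces the canonical sequence $0\to \mathbf{a}\to R\to R/\mathbf{a}\to 0$ to split in $R-\text{Mod}$, so $R/\mathbf{a}$ is a finitely generated projective $R$-module. In particular $\mathbf{a}$ is a direct summand of $R$, hence finitely generated on the left, and $\text{Ext}_R^k(R/\mathbf{a},?)=0$ for all $k\ge 1$. The hypotheses of Example \ref{exem.sum of stalks} are therefore automatic, so $G:=\frac{R}{\mathbf{a}}[0]\oplus \frac{\mathbf{a}}{t(\mathbf{a})}[1]$ is a progenerator of $\mathcal{H}_\mathbf{t}$.

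Next I would identify the summand in degree $-1$ more concretely. Writing $R=\mathbf{a}\oplus N$ with $N\cong R/\mathbf{a}\in\mathcal{T}$, additivity of the torsion radical gives $t(R)=t(\mathbf{a})\oplus N$, and passing to quotients yields an isomorphism $R/t(R)\cong \mathbf{a}/t(\mathbf{a})$ of left $R$-modules. A routine verification shows $t(R)=\{r\in R:\mathbf{a}r=0\}$ is a two-sided ideal, since $\mathbf{a}(rs)=(\mathbf{a}r)s=0$ whenever $\mathbf{a}r=0$.

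With these identifications in place, I would compute the endomorphism ring
\[
S=\begin{pmatrix}\text{End}_R(R/t(R))^{op} & 0\\ \text{Ext}_R^1(R/\mathbf{a},R/t(R)) & \text{End}_R(R/\mathbf{a})^{op}\end{pmatrix}
\]
supplied by Proposition \ref{prop.progenerator as sum of stalks}. Both $\mathbf{a}$ and $t(R)$ being two-sided ideals, the standard cyclic-module argument gives $\text{End}_R(R/\mathbf{a})^{op}\cong R/\mathbf{a}$ and $\text{End}_R(R/t(R))^{op}\cong R/t(R)$. The off-diagonal Ext vanishes by the projectivity of $R/\mathbf{a}$. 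Therefore $S\cong R/t(R)\times R/\mathbf{a}$ as rings, and Proposition \ref{prop.progenerator as sum of stalks} yields $\mathcal{H}_\mathbf{t}\simeq (R/t(R)\times R/\mathbf{a})-\text{Mod}$, as claimed.

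For the centrally split assertion, I would invoke the classical fact that if both constituents of the TTF triple are split, then $\mathbf{a}=Re=eR$ for a central idempotent $e$; in this case $t(R)=R(1-e)$, so $R/t(R)\times R/\mathbf{a}\cong Re\times R(1-e)\cong R$ as rings, and the first part gives $\mathcal{H}_\mathbf{t}\simeq R-\text{Mod}$. I do not expect a serious obstacle; the only points requiring care are the identification $\mathbf{a}/t(\mathbf{a})\cong R/t(R)$ and the verification that $t(R)$ is two-sided, after which the vanishing of the off-diagonal Ext and the final conclusion follow immediately.
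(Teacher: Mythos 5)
Your proof is correct and follows essentially the same route as the paper's: both observe that left splitness makes $\mathbf a$ a direct summand of $_RR$, hence $R/\mathbf a$ projective and $\mathbf a$ finitely generated on the left, invoke Example \ref{exem.sum of stalks} and Proposition \ref{prop.progenerator as sum of stalks} to get the progenerator $\frac{R}{\mathbf a}[0]\oplus\frac{\mathbf a}{t(\mathbf a)}[1]$, and then compute the endomorphism ring. You make explicit two points the paper leaves tacit — the identification $\mathbf a/t(\mathbf a)\cong R/t(R)$ via the decomposition $R=\mathbf a\oplus N$ and the verification that $t(R)$ is two-sided so that $\End_R(R/t(R))^{op}\cong R/t(R)$ — which is a useful clarification but not a different argument.
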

\begin{proof}
In this case $\mathbf{a}$ is a direct summand of $_RR$, whence
projective,  so that example \ref{exem.sum of stalks} and
proposition \ref{prop.progenerator as sum of stalks} apply.  Note
that then $V=\frac{R}{\mathbf{a}}$ is a projective left $R$-module,
which implies that $\mathcal{H}_\mathbf{t}$ is equivalent to
$S-\text{Mod}$, where $S\cong
\text{End}_R(\frac{R}{t(R)})^{op}\times\text{End}_R(\frac{R}{\mathbf{a}})^{op}\cong\frac{R}{t(R)}\times
\frac{R}{\mathbf{a}}$.

When the TTF triple is centrally split, we have a central idempotent
$e$ such that $\mathbf{a}=Re$ and $t(R)=R(1-e)$, so that
$R/\mathbf{a}\cong R(1-e)$ and $R/t(R)\cong Re$. The result in this
case follows immediately since we have a ring isomoprhism $R\cong
Re\times R(1-e)$.
\end{proof}

\section{Some examples}

We now give a few examples which illustrate the results obtained in
the previous sections. All of them refer to finite dimensional
algebras over a field which are given by quivers and relations. We
refer the reader to \cite[Chapter II]{ASS} for the terminology that
we use.

\begin{exem} \label{exem.counterexample to Mantese-Tonolo}
Let $\xymatrix{Q_n:1\ar[r] &2 \ar[r]&\cdots \ar[r] & n}$ ($n>1$) be
the Dynkin quiver of type $\mathbf{A}$,  let $R=KQ_n$ the
corresponding path algebra, where $K$ is any field, and let us take
$\mathbf{a}=Re_nR$. If $(\mathcal{C},\mathcal{T},\mathcal{F})$ is
the TTF triple associated to $\mathbf{a}$ and
$\mathbf{t}=(\mathcal{T},\mathcal{F})$ is its right constituent
torsion pair, then $\mathcal{H}_\mathbf{t}$ is equivalent to
$K\times KQ_{n-1}-\text{Mod}$. In particular $\mathcal{D}(R)$ and
$\mathcal{D}(\mathcal{H}_\mathbf{t})$ are not equivalent
triangulated categories.

\end{exem}
\begin{proof}
We  have that $\mathbf{a}=Re_n$ and this shows that $e_nR(1-e_n)=0$
and that $\mathbf{a}$ is projective and injective as left
$R$-module. Since $R$ is hereditary we conclude that $\mathcal{C}$
consists of injective modules. Therefore
$(\mathcal{C},\mathcal{T},\mathcal{F})$ is a left split TTF triple
(see also \cite[Theorem 3.1]{NS}).

On the other hand, we have that $t(R)=R(1-e_n)\oplus Je_n$, where
$J$ is the Jacobson radical of $R$. This implies that $R/t(R)\cong
K$. On the other hand, we clearly have that $R/\mathbf{a}$ is
isomorphic to the path algebra $ KQ_{n-1}$  of type
$\mathbf{A}_{n-1}$. Then, by corollary \ref{cor.split and
commutative case}, we have $\mathcal{H}_\mathbf{t}\cong K\times
KQ_{n-1}-\text{Mod}$. Moreover  $R=KQ_n$ and $K\times KQ_{n-1}$
cannot be derived equivalent algebras  because their centers
(=$0$-th Hochschild cohomology spaces) are not isomorphic (see
\cite[Proposition 2.5]{R2}).
\end{proof}

\begin{exem}
Let $K$ be a field and $R$ be the $K$-algebra given by the following
quiver and relations:

$$\xymatrix{ 1 \ar@<1ex>[r]^{\alpha} \ar@<-0.5ex>[r]_{\beta}
& 2  \ar@<1ex>[r]^{\gamma} \ar@<-0.5ex>[r]_{\delta} & 3 } \hspace{2
cm} \alpha \delta\text{, } \beta\gamma \text{ and
}\alpha\gamma\text{-}\beta\delta$$

Let $\mathbf{a}$ be an idempotent ideal of $R$, let
$(\mathcal{C},\mathcal{T},\mathcal{F})$ be the associated TTF triple
in $R-\text{Mod}$ and let $\mathbf{t}=(\mathcal{T},\mathcal{F})$ be
its right constituent torsion pair. The following facts are true:

\begin{enumerate}
\item If $\mathbf{a}=Re_1R$ then $\mathcal{H}_\mathbf{t}$ has a progenerator which is a sum of stalk complexes and
 $\mathcal{H}_\mathbf{t}\cong K\Gamma-\text{Mod}$, where $\Gamma$ is the quiver $\xymatrix{2
\ar@<1ex>[r] \ar@<-1ex>[r]& 3 \ar[r] & 1}$.  The algebra $R$ is then
tilted of type $\Gamma$.
\item If $\mathbf{a}=Re_2R$ then $\mathcal{H}_\mathbf{t}$ is
equivalent to $K\times K\times K-\text{Mod}$.
\item If $\mathbf{a}=R(e_1+e_2)R$ then $\mathcal{H}_\mathbf{t}$ does not have a sum of stalk complexes as a progenerator and
$\mathcal{H}_\mathbf{t}\cong S-\text{Mod}$, where $S$ is the algebra
given by the following quiver and relations

$$\xymatrix{ 2 \ar@<0ex>[rr]^{\mu_2} \ar@<2.2ex>[rr]^{\mu_1} \ar@<-2.2ex>[rr]^{\mu_3}  &&  3  \ar@<1ex>[rr]^{\pi_1} \ar@<-1ex>[rr]_{\pi_2} && 1}$$
\begin{center}

$\mu_1\pi_2=\mu_3\pi_1=0$;

$\mu_1\pi_1=-\mu_2\pi_2$

$\mu_2\pi_1=\mu_3\pi_2$.
\end{center}
The algebras $R$ and $S$ are derived equivalent and, hence, $S$ is
piecewise hereditary (i.e. derived equivalent to a hereditary
algebra).
\end{enumerate}
\end{exem}
\begin{proof}
Using a classical visualization of modules via diagrams (see, e.g.,
\cite{F}), the indecomposable projective left $R$-modules can be
depicted as:

$$\xymatrix{1 & & & 2 \ar@{-}[ld] \ar@{-}[rd] &  & & 3 \ar@{-}[dr] \ar@{-}[dl] & \\ && _{1}\alpha & & _{1}\beta &  _{2}\gamma \ar@{-}[dr] && _{2}\delta \ar@{-}[dl] &&&\\ &&&&&& _{1}(\alpha \gamma=\beta \delta)&&}$$

By corollary \ref{cor.faithful hereditary torsion pair}, whenever
$\mathbf{t}$ is faithful, if $\mathcal{H}_\mathbf{t}$ is equivalent
to $S-\text{Mod}$, then $S$ and $R$ are derived equivalent. Even
more, in that case $R[1]$ is a tilting object of
$\mathcal{H}_\mathbf{t}$, which implies that $R$ and $S$ are
tilting-equivalent.

1) In this case we have  $\mathbf{a}=\text{Soc}(_RR)\cong
S_1^{(4)}\cong Re_1^{(4)}$, so that $\mathbf{a}$ is projective in
$R-\text{Mod}$ and $\mathbf{t}$ is a faithful torsion pair. Then we
have that $\text{Ext}_R^2(\frac{R}{\mathbf{a}},?)\equiv 0$ and, by
example \ref{exem.sum of stalks}, we conclude that
$\frac{R}{\mathbf{a}}\oplus\mathbf{a}[1]$ is a progenerator of
$\mathcal{H}_\mathbf{t}$. It follows that
$G:=\frac{R}{\mathbf{a}}\oplus S_1[1]$ is also a progenerator of
$\mathcal{H}_\mathbf{t}$.

We put $A:=\frac{R}{\mathbf{a}}$, which is isomorphic to the
Kronecker algebra: $\xymatrix{2  \ar@<1ex>[r] \ar@<-0.5ex>[r] & 3 }$
and which we also view as a left $R$ module annihilated by
$\mathbf{a}$. Then $\mathcal{H}_\mathbf{t}$ is equivalent to
$S-\text{Mod}$, where
$S=\text{End}_{\mathcal{H}_\mathbf{t}}(G)^{op}\cong\begin{pmatrix}
\text{End}_A(S_1)^{op} & 0\\ \text{Ext}_R^1(A,S_1) & A
\end{pmatrix}\cong\begin{pmatrix}
K & 0\\ \text{Ext}_R^1(A,S_1) & A
\end{pmatrix}$.

Note that $A=S_2\oplus\frac{Re_3}{R\alpha\gamma}$ as left
$R$-module, so that we have a vector space decomposition
$\text{Ext}_R^1(A,S_1)\cong\text{Ext}_R^1(S_2,S_1)\oplus\text{Ext}_R^1(\frac{Re_3}{R\alpha\gamma},S_1)$.
Let $\epsilon'$ be the element of $\text{Ext}_R^1(A,S_1)$
represented by the short exact sequence

\begin{center}
 $\xymatrix{0 \ar[r] & S_1 \hspace{0.08cm}\ar@{^(->}[r] &
Re_3\ar[r]&\frac{Re_3}{R\alpha\gamma}\ar[r]& 0}$
\end{center}
The assignment $a\rightsquigarrow a\epsilon'$ gives an isomorphism
of left $A$-modules
$\xymatrix{Ae_3\ar[r]^{\sim\hspace{0.7cm}}&\text{Ext}_R^1(A,S_1)}$,
so
that the algebra $S$ is isomorphic to $\begin{pmatrix} K & 0\\
Ae_3 & A
\end{pmatrix}\cong K\Gamma$, where $\Gamma$ is
the quiver $\xymatrix{2 \ar@<1ex>[r] \ar@<-1ex>[r]& 3 \ar[r] & 1}$.

\vspace*{0.3cm}

2) We have that $\mathbf{a}=Re_2R=Re_2\oplus Je_3$ is not projective
 and that $R/Re_2R\cong S_1\oplus S_3$ in $R-\text{Mod}$. We then
 get that $\mathcal{F}=\{F\in R-\text{Mod}:$
 $\text{Soc}(F)\in\text{Add}(S_2)\}$, so that $\mathbf{t}$ is not
 faithful. The minimal projective resolution of $Je_3$ is of the form $\xymatrix{0\ar[r]& P_1^{(3)}\ar[r] & P_2^{(2)}\ar[r]&Je_3\ar[r]&
 0}$, where $P_1=S_1$ is simple projective. It follows that

\begin{center}
 $\text{Ext}_R^2(\frac{R}{\mathbf{a}},F)\cong\text{Ext}_R^2(S_1\oplus
 S_3,F)\cong\text{Ext}_R^2(S_3,F)\cong\text{Ext}_R^1(Je_3,F)=0$,
 \end{center}
 for each $F\in\mathcal{F}$. By example \ref{exem.sum of stalks},
 we know that
 $\frac{R}{\mathbf{a}}[0]\oplus\frac{\mathbf{a}}{t(\mathbf{a})}[1]\cong (S_1\oplus S_3)[0]\oplus S_1^{(3)}[1]$
 is a progenerator of $\mathcal{H}_\mathbf{t}$, which implies that
 $G:=(S_1\oplus S_3)[0]\oplus S_1[1]$ is also a progenerator of
 $\mathcal{H}_\mathbf{t}$. Since we have
 $\text{Ext}_R^1(S_1,S_1)=0=\text{Ext}_R^1(S_3,S_1)$, we get from
 proposition \ref{prop.progenerator as sum of stalks} that
 $\mathcal{H}_\mathbf{t}$ is equivalent to $S-\text{Mod}$, where $S=\text{End}_R(S_1)^{op}\times\text{End}_R(S_1\oplus S_3)^{op}\cong K\times K\times
 K$.

\vspace*{0.3cm}

3) We have isomorphisms $\mathbf{a}=R(e_1+e_2)R\cong Re_1\oplus
Re_2\oplus Je_3$ and $R/\mathbf{a}\cong S_3$ in $R-\text{Mod}$, and
this implies that $\mathcal{F}=\{F\in R-\text{Mod}:$
 $\text{Soc}(F)\in\text{Add}(S_1\oplus S_2)\}$ and that $\mathcal{F}$ is
 faithful. Since $\text{Ext}_R^2(S_3,S_1)\neq
 0$, this time we do not have a sum of stalk complexes as a
 progenerator of $\mathcal{H}_\mathbf{t}$. Instead, inspired by the proof of corollary \ref{cor.when a is torsionfree}, we consider the
 minimal projective resolution of $S_3\cong R/\mathbf{a}$

 \begin{center}
 $\xymatrix{0\ar[r]&P_1^{(3)}\ar[r]&P_2^{(2)}\ar[r]^{d'}&P_3\ar[r]&S_3 \ar[r] & 0}$
 \end{center}
 and take the complex $\xymatrix{G':= \cdots \ar[r] & 0 \ar[r] &  P_2^{(2)}\ar[r]^{d'}&P_3\ar[r] &  0 \ar[r] & \cdots}$, concentrated in degrees $-1$ and $0$. Now the complex $G:=G'\oplus
 P_1[1]\oplus P_2[1]$ satisfies all conditions in assertion 2 of corollary \ref{cor.classical tilting
 progenerator}, so that it is a progenerator of
 $\mathcal{H}_\mathbf{t}$ and $\mathcal{H}_\mathbf{t}\cong
 S-\text{Mod}$, where $S=\begin{pmatrix} \text{End}_R(P_1\oplus P_2)^{op} & \text{Hom}_{\mathcal{D}(R)}(P_1[1]\oplus P_2[2],G')
 \\ \text{Hom}_{\mathcal{D}(R)}(G',P_1[1]\oplus P_2[2]) &
 \text{End}_{\mathcal{D}(R)}(G')^{op} \end{pmatrix}$.

We clearly have $\text{End}_R(P_1\oplus
P_2)^{op}\cong\begin{pmatrix} \text{End}_R(P_2)^{op} & 0\\
\text{Hom}_R(P_1,P_2) & \text{End}_R(P_1)^{op}\end{pmatrix}\cong \begin{pmatrix} K & 0\\
K^2 & K\end{pmatrix}=:A$, which is isomorphic to the Kronecker
algebra. Moreover, the $0$-homology functor defines an isomorphism
\linebreak
$\xymatrix{\text{End}_{\mathcal{D}(R)}(G')\ar[r]^{\sim\hspace{1.6
cm}}&\text{End}_R(H^0(G'))\cong\text{End}_R(S_3)\cong K}$. On the
other hand, $\text{Hom}_{\mathcal{D}(R)}(G',P_1[1]\oplus P_2[1])$ is
a $2$-dimensional vector space,  where a basis $\{\pi_1,\pi_2\}$ is
induced by the two projections $\xymatrix{P_2^{(2)}\ar@{>>}[r]&
P_2}$. Similarly, $\text{Hom}_{\mathcal{D}(R)}(P_1[1]\oplus
P_2[2],G')$ is a 3-dimensonal vector space with a basis $\{\mu_i:$
$i=1,2,3\}$ induced by the monomorphisms $\xymatrix{P_1=Re_1
\hspace{0.07cm}\ar@{^(->}[r]&P_2^{(2)}}$ which map $e_1$ onto
$(\beta ,0)$, $(\alpha ,-\beta )$ and $(0,\alpha )$, respectively.
Since the multiplication in   $S$ is given by anti-composition of
the entries, we easily get that $\pi_i\mu_j=\mu_j\circ\pi_i=0$, for
all $i,j$. On the other hand, we have:

\begin{center}

 $\mu_3\pi_1=\pi_1\circ\mu_3=0=\pi_2\circ\mu_1=\mu_1\pi_2$

$\mu_1\pi_1=\pi_1\circ\mu_1=-\pi_2\circ\mu_2=-\mu_2\pi_2=\begin{pmatrix}
0 & 0\\ \rho_\beta & 0
\end{pmatrix}:\xymatrix{P_1\oplus P_2\ar[r]&P_1\oplus P_2}$

$\mu_2\pi_1=\pi_1\circ\mu_2=\pi_3\circ\mu_2=\mu_2\pi_3=\begin{pmatrix}
0 & 0\\ \rho_\alpha & 0
\end{pmatrix}:\xymatrix{P_1\oplus P_2\ar[r]&P_1\oplus P_2}$
\end{center}
where $\xymatrix{\rho_x:P_1=Re_1\ar[r] & P_2}$ maps
$a\rightsquigarrow ax$, for each $x\in P_2$. It easily follows that
$S$ is given by quivers and relations as claimed in the statement.
\end{proof}

Given a finite quiver $Q$ with no oriented cycle, a path $p$ will be
called a \emph{maximal path} when its origin is a source and its
terminus is a sink. We put
$D=\text{Hom}_K(?,KQ)=KQ-\text{mod}^{op}\stackrel{\cong}{\longleftrightarrow}\text{mod}-KQ$
to denote the usual duality between finitely generated left and
right $KQ$-modules.

\begin{exem} \label{exem.progenerator V not tilting}
Let $Q$ be a finite connected quiver with no oriented cycles which
is different from $1\rightarrow \dots \rightarrow n$, and let $i\in
Q_0$ be a source. Let us form a new quiver $\hat{Q}$ as follows. We
put $\hat{Q}_0=Q_0$ and the arrows of $\hat{Q}$ are the arrows of
$Q$ plus an arrow $\alpha_p:t(p)\rightarrow i$, for each maximal
path $p$ in $Q$. Given a field $K$, we consider the $K$-algebra $R$
with quiver $\hat{Q}$ and relations:

\begin{enumerate}
\item $\alpha_p\beta =0$, for each  $\beta\in Q_1$ and each maximal path $p$ in $Q$;

\item $p'\alpha_p=q'\alpha_q$, whenever $p'$ and $q'$  are paths in $Q$ such
that $s(p')=s(q')$ and there is a path $\pi:j\rightarrow \dots
\rightarrow s(p')=s(q')$ in $Q$ such that $\pi p'=p$ and $\pi q'=q$.
\end{enumerate}
We identify $KQ-\text{Mod}$ with the full subcategory of
$R-\text{Mod}$ consisting of the $R$-modules annihilated by the
two-sided ideal generated by the $\alpha_p$. Then
$\mathbf{t}=(KQ-\text{Inj},(KQ-\text{Inj})^\perp)$ is a non-tilting
torsion pair in $R-\text{Mod}$ such that $D(KQ)[0]$ is a
progenerator of $\mathcal{H}_\mathbf{t}$.
\end{exem}
\begin{proof}
Let $\mathcal{M}$ be the set of maximal path in $Q$ and consider the
paths of $Q$ as the canonical basis $B$ of $KQ$. Its dual basis is
denoted by $B^*$. Consider the assignment
$\sum_{p\in\mathcal{M}}a_p\alpha_p\rightsquigarrow
(\sum_{p\in\mathcal{M}}a_pp^*)\otimes\bar{e}_i$, where $a_p\in
KQe_{t(p)}$ for each $p\in\mathcal{M}$ and
$\bar{e}_i=e_i+e_iJ\in\frac{e_iKQ}{e_iJ}$ is the canonical element.
Here $J=J(KQ)$ is the Jacobson radical,  a basis of which is given
by the paths of length $>0$. This assignment defines an isomorphism
of $KQ$-bimodules

\begin{center}
$\mathbf{a}:=\sum_{p\in\mathcal{M}}R\alpha_pR=\sum_{p\in\mathcal{M}}R\alpha_p\stackrel{\cong}{\longrightarrow}D(KQ)\otimes_K\frac{e_iKQ}{e_iJ}=:M$.
\end{center}
Moreover, it is the restriction of an algebra isomorphism
$R\stackrel{\cong}{\longrightarrow}KQ\rtimes M$ which maps
$e_i\rightsquigarrow (e_i,0)$, $\beta\rightsquigarrow (\beta ,0)$
and   $\alpha_p\rightsquigarrow (0,p^*\otimes\bar{e}_i)$, for all
$i\in Q_0$, all $\beta\in Q_1$ and all $p\in\mathcal{M}$.

Our hypotheses on $Q$ guarantee that there is no
projective-injective $KQ$-module. Then $D(KQ)$ is a classical
1-tilting $KQ$-module whose associated torsion pair in
$KQ-\text{Mod}$, namely
$\mathbf{t}=(KQ-\text{Inj},(KQ-\text{Inj})^\perp)$,  is faithful. On
the other hand, the fact that $i$ is a source and $Q$ is connected
implies that $\frac{e_iKQ}{e_iJ}\otimes_{KQ}D(KQ)=0$. Then theorem
\ref{teor.nontilting pair with stalk progenerator} applies, with
$A=KQ$, $V=D(KQ)$ and $X=\frac{e_iKQ}{e_iJ}$.
\end{proof}


\begin{thebibliography}{80}

\bibitem[A]{A} P. Ara, \emph{Extensions of exchange rings}, Journal of Algebra,
\textbf{197} (1997), 409-423.

\bibitem[ASS]{ASS} I. Assem, D. Simson, A. Skowro\'nski, \emph{Elements of the Representation Theory of Associative Algebras. Vol. 1:
Techniques of Representation Theory}. London Math.Soc. Student Texts
\textbf{65}, Cambridge Univ. Press (2006).



\bibitem[BBD]{BBD} A. Beilinson, J. Bernstein, P. Deligne, ``Faisceaux Pervers''. \emph{Analysis and topology on  singulas spaces,} I, Luminy 1981, Ast\`{e}risque. \textbf{100}.
  Soc. Math. France, Paris. (1982), 5-171.



\bibitem[C]{C} R. Colpi, \emph{Tilting in Grothendieck categories}.
Forum Math. \textbf{11} (1999), 735-759.

\bibitem[CDT1]{CDT1} R. Colpi, G. D'Este, A. Tonolo, \emph{Quasi-tilting modules and counter equivalences}, Journal of Algebra, \textbf{191} (1997), 461-494.

\bibitem[CDT2]{CDT2} R. Colpi, G. D'Este, A. Tonolo, \emph{Corrigendum: ``Quasi-tilting modules and counter equivalences''}, Journal of Algebra, \textbf{206} (1998), 370.

\bibitem[CG]{CG} R. Colpi, E. Gregorio, \emph{The Heart of cotilting theory pair is a Grothendieck category}, preprint.


\bibitem[CGM]{CGM} R. Colpi, E. Gregorio, F. Mantese, \emph{On the Heart of a faithful torsion theory}, Journal of Algebra,
 \textbf{307} (2007), 841-863.


\bibitem[CMT]{CMT}R. Colpi, F. Mantese, A. Tonolo, \emph{When the heart of a faithful torsion pair is a module category}, Journal of Pure and Applied Algebra, \textbf{215} (2011) 2923-2936.

\bibitem[CM]{CM} R. Colpi, C. Menini \emph{On the structure of
$\star$-modules}. J. Algebra \textbf{158} (1993), 400-419.


\bibitem[CT]{CT} R. Colpi, J. Trlifaj, \emph{Tilting modules and tilting torsion
pairs}. J. Algebra \textbf{178}(2) (1995), 614-634.



%\bibitem[E]{E} E. Enochs, \emph{Injective and flat covers, envelopes and resolvents}, Israel J. Math.,
%\textbf{39} (1981), 189-209.

\bibitem[F]{F} K.R. Fuller, \emph{Algebras from diagrams}, J. Pure
and Appl. Algebra \textbf{48} (1987), 23-37.

%\bibitem[GG]{GG} J.L. G\'omez Pardo, J.L. Garc\'ia Hern\'andez,
%\emph{On endomorphism rings of quasi-projective modules}. Math.
%Zeitschr. \textbf{196} (1987), 87-108.


\bibitem[HRS]{HRS} D. Happel, I. Reiten, S.O. Smal\o, \emph{Tilting in abelian categories and quasitilted algebras},
 Mem. Amer. Math.
Soc. \textbf{120} (1996).

\bibitem[HKM]{HKM} M. Hoshino, Y. Kato, J-I. Miyachi, \emph{On t-structures and torsion theories induced by compact objects}, Journal of Pure and Applied Algebra,
\textbf{167} (2002), 15-35.


\bibitem[K]{K} F. Kasch, \emph{Modules and Rings}, Academic Press Inc., London, New York,
Paris (1982).


%\bibitem[Ke]{Ke} B. Keller, \emph{Deriving DG categories}, Ann. Scient. Ec. Norm. Sup., $4^{e}$ s\'erie \textbf{27}
%(1994), 63-102.

\bibitem[L]{L} D. Lazard, \emph{Autour de la platitude}. Bull. Soc.
Math. France \textbf{97} (1969), 81-128.

%\bibitem[M]{M} F. Mattiello, \emph{On the heart associated to a faithful torsion pair}, available on \newline
% www.algant.eu/documents/theses/mattiello.pdf, (2011).

\bibitem[MT]{MT} F. Mantese, A. Tonolo, \emph{On the heart associated with a torsion pair}, Topology and its Applications,
\textbf{159} (2012), 2483-2489.

\bibitem[Mi]{Mi} Y. Miyashita, \emph{Tilting modules of finite projective
dimension}. Math. Zeitschr. \textbf{193}(1) (1986), 113-146.


\bibitem[N]{N} A. Neeman, \emph{Triangulated categories}, Annals of Mathematics studies. Princeton University Press,
\textbf{148} (2001).


\bibitem[NS]{NS} P. Nicol\'as, M. Saor\'in, \emph{Classification of split torsion torsionfree triples in modules categories}, Journal of Pure and Applied Algebra,
\textbf{208} (2007), 979-988.


\bibitem[P]{P} B. Pareigis, \emph{Categories and functors}, Academic Press (1970).

\bibitem[PS]{PS} C.E. Parra, M. Saor\'in, \emph{Direct limits in the heart of a t-structure: the case of a torsion
pair}. To appear in J. Pure and Appl. Algebra.  Preprint available at arXiv.org/abs/1311.6166


\bibitem[Po]{Po} N. Popescu, \emph{Abelian categories with applications to rings and
modules}. London Math. Soc. Monogr. \textbf{3}, Academic Press
(1973).


\bibitem[R]{R} J. Rickard, \emph{Morita theory for Derived Categories}, J. London Math. Soc.,
\textbf{39} (1989), 436-456.

\bibitem[R2]{R2} J. Rickard, \emph{Derived equivalences as derived
functors}. J. London Math. Soc. \textbf{43}(2) (1991), 37-48.






\bibitem[S]{S} B. Stenstr\"om, \emph{Rings of quotients},
Grundlehren der math. Wissensch., \textbf{217}, Springer-Verlag,
(1975).


\bibitem[V]{V} J.L. Verdier, \emph{Des cat\'egories d\'eriv\'ees des cat\'egories
ab\'eliennes}. Asterisque \textbf{239} (1996).



\end{thebibliography}
\end{document}